\documentclass{amsart}

\usepackage[all]{xy}

\usepackage{latexsym}

\usepackage{amssymb}
\usepackage{amsfonts}
\usepackage{amscd}
\usepackage{amsmath,amsthm}
\usepackage{xcolor}

\newtheorem{lemma}{Lemma}[section]
\newtheorem{proposition}[lemma]{Proposition}
\newtheorem{theorem}[lemma]{Theorem}
\newtheorem{corollary}[lemma]{Corollary}

{

}

\theoremstyle{definition}

\newtheorem{example}[lemma]{Example}
\newtheorem{definition}[lemma]{\sl Definition}

\theoremstyle{remark}

\newtheorem{remark}[lemma]{Remark}

\newcommand{\Hom}{\operatorname{Hom}}
\newcommand{\Ext}{\operatorname{Ext}}

\newcommand{\End}{\operatorname{End}}

\newcommand{\cO}{\mathcal{O}}
\newcommand{\rk}{\operatorname{rk}}
\newcommand{\cE}{\mathcal{E}}
\newcommand{\cF}{\mathcal{F}}

\numberwithin{equation}{section}

\begin{document}

\title[Three-periodic helices on elliptic curves]{Three-periodic helices on elliptic curves and their associated regular algebras}

\author{D. Chan}
\address{University of New South Wales}
\email{danielc@unsw.edu.au}

\author{A. Nyman}
\address{Western Washington University}
\email{adam.nyman@wwu.edu}
\keywords{}
\thanks{2010 {\it Mathematics Subject Classification. } Primary 14A22; Secondary 16S38}

\begin{abstract}
Let $k$ denote an algebraically closed field of characteristic zero and let $X$ denote a smooth elliptic curve over $k$.  Given a three-periodic elliptic helix $\underline{\mathcal{E}}$ of vector bundles over $X$ with endomorphism $\mathbb{Z}$-algebra $\End \underline{\cE}$ and quadratic cover $\mathbb{S}^{nc}(\underline{\cE})$, we prove that $\End \underline{\cE}$ is the quotient of $\mathbb{S}^{nc}(\underline{\cE})$ by a degree three family of normal elements, generalizing \cite[Theorem 1.2(4)]{morph} to the case in which $\operatorname{dim }\End \underline{\cE}_{i, i+1}$ isn't a constant function of $i$.  We then show that $\End \underline{\cE}$ is noetherian if and only if it has polynomial growth, and in this case, the ranks of any three consecutive bundles in the helix are a Markov triple.  Furthermore, in this case $\mathbb{S}^{nc}(\underline{\cE})$ is a noetherian GK-three $\mathbb{Z}$-algebra which is ${\sf Proj }$-equivalent to an elliptic algebra.  We conclude the paper by constructing several new families of elliptic helices with exponential growth.

\end{abstract}

\maketitle

\pagenumbering{arabic}
Throughout this paper, we let $k$ denote an algebraically closed field of characteristic zero and we let $X$ denote a smooth elliptic curve over $k$.

\section{Introduction}  \label{sec:intro}
In the classic paper \cite{atv}, Artin-Tate-Van den Bergh studied regular algebras associated to an  elliptic curve $X$ as noncommutative analogues of the polynomial ring in three variables and hence, also noncommutative planes. One way to interpret their construction is that they embed $X$ ``noncommutatively'' by replacing the degree three polarizing line bundle with an invertible bimodule $\cE$. This gives a noncommutative homogeneous coordinate ring, and taking the quadratic cover (that is, retaining only quadratic relations), one obtains a regular algebra. Such algebras have an associated division ring of fractions, which has transcendence degree two, and Artin conjectured in \cite{problems}, that the only other such division rings are those which are Ore extensions of transcendence degree one fields and those which are finite over their centre. 

Inspired by \cite{markov}, in which the notion of a helix of coherent sheaves over $\mathbb{P}^{2}$, i.e. a planar helix, was defined and studied in order to classify strongly exceptional collections of vector bundles over $\mathbb{P}^{2}$ of length three, Bondal and Polishchuk replaced the powers $\cE^{\otimes i}$ of $\cE$ by a sequence of line bundles $\underline{\cE} =(\cE_i)_{i\in \mathbb{Z}}$ satisfying a three-periodic helical condition \cite{bp} (see Definition~\ref{def:weakHelix}). A new feature that occurs is that the graded homogeneous coordinate ring is now replaced by the endomorphism ring $\End \underline{\cE} = B = \oplus B_{ij}, \ B_{ij} := \Hom_X(\cE_{-j}, \cE_{-i})$ which is no longer a graded algebra, but rather is a $\mathbb{Z}$-algebra.   

In \cite{CN} and \cite{morph}, we further generalised Bondal and Polishchuk's work to certain helices of vector bundles over $X$. One constraining feature in these papers was that the algebras constructed were all equigenerated in the sense that all the Hilbert series $H_{B,n} := \sum \dim_k B_{n,n+i} t^i$ were identical for all $n$. In this work, we study the non-equigenerated case in detail. 

Key to the analysis is a computation of the Hilbert series, of which there are now three (see Theorem~\ref{thm:hilbertSeries}):
\begin{theorem}  \label{thm:introHilbert}
Let $\underline{\cE}$ be an elliptic helix of vector bundles on $X$, $B = \End \underline{\cE}$ and $\mathbb{S}=\mathbb{S}^{nc}(\underline{\cE})$ its quadratic cover. Then the Hilbert series are 3-periodic in the sense that $H_{B,n+3} = H_{B,n}, H_{\mathbb{S},n+3} = H_{\mathbb{S},n}$. Furthermore, 
\begin{equation}
\begin{pmatrix}
    H_{\mathbb{S},0}(t) \\ H_{\mathbb{S},1}(t) \\ H_{\mathbb{S},2}(t) 
\end{pmatrix}
 = 
 \frac{1}{D(t)}
 \begin{pmatrix}
     1 \\ 1 \\ 1
 \end{pmatrix}
\ \text{ and } \ 
\begin{pmatrix}
    H_{B,0}(t) \\ H_{B,1}(t) \\ H_{B,2}(t) 
\end{pmatrix}
 = 
 \frac{1-t^3}{D(t)}
 \begin{pmatrix}
     1 \\ 1 \\ 1
 \end{pmatrix}.
\end{equation}
where $D(t)$ is the $3\times 3$-matrix given in (\ref{eq:denominatorHilbert}) of Theorem~\ref{thm:hilbertSeries}. 
\end{theorem}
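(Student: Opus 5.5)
The plan is to compute $H_{B,n}$ directly from the geometry and obtain $H_{\mathbb{S},n}$ from Koszulity of the quadratic cover. Write $d_i := \dim \Hom(\cE_{i},\cE_{i+1})$, or equivalently $\dim B_{-i-1,-i}$. On an elliptic curve, the helix condition of Definition~\ref{def:weakHelix} gives exact sequences
\[ 0\to \cE_{i}\to \Hom(\cE_{i+1},\cE_{i+2})^*\otimes \cE_{i+1}\to \Hom(\cE_{i+2},\cE_{i+3})\otimes\cE_{i+2}\to \cE_{i+3}\to 0 \]
(the mutation/left-right dual description). I would first show that the ranks and degrees of the $\cE_i$ satisfy a linear three-term recursion in which $d_{i+3}=d_i$. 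Here $d_i=\chi(\cE_i,\cE_{i+1})=\rk\cE_i\deg\cE_{i+1}-\rk\cE_{i+1}\deg\cE_i$ by Riemann--Roch and stability, and the Euler form is invariant under the shift $\cE_i\mapsto\cE_{i+3}$. That invariance is the $3$-periodicity, and it comes from the fact that the sequences are preserved by the helix shift. Consequently $\dim B_{n,n+m}=\chi(\cE_{-n-m},\cE_{-n})$ for $m\ge1$, using vanishing of $\Ext^1$ between stable bundles of increasing slope. This quantity depends only on $n \bmod 3$, so $H_{B,n+3}=H_{B,n}$. The same holds for $\mathbb{S}$, since $\mathbb{S}$ is determined by $B_{n,n+1}$, $B_{n,n+2}$ and the multiplication, all of which are $3$-periodic up to the shift.

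Next, for $\mathbb{S}$ I would use the Koszul resolution of the simple modules. By the results recalled earlier (from \cite{CN}, \cite{morph}), $\mathbb{S}^{nc}(\underline{\cE})$ is Koszul and AS-regular of dimension three. The minimal resolution of the simple $e_n$-module therefore has the shape
\[0\to e_{n+3}\mathbb{S}\to e_{n+2}\mathbb{S}^{\,d}\to e_{n+1}\mathbb{S}^{\,d'}\to e_n\mathbb{S}\to k_n\to 0,\]
with multiplicities read off from $\dim \mathbb{S}_{n,n+1}$ and its dual. Taking dimensions degreewise gives the identity $\sum_j D(t)_{nj}H_{\mathbb{S},j}(t)=1$, where the $3\times3$ matrix $D(t)$ has entries $1$, $-d_\bullet t$, $d_\bullet t^2$ and $-t^3$, arranged cyclically modulo $3$. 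Inverting this gives the formula for $\mathbb{S}$. For $B$, the earlier normal-element result (the generalization of \cite[Theorem 1.2(4)]{morph} stated in the abstract) exhibits $B=\mathbb{S}/g\mathbb{S}$ with $g$ a regular normal family of degree $3$. This gives the exact sequence $0\to e_{n}\mathbb{S}(-3)\to e_n\mathbb{S}\to e_nB\to0$, hence $H_{B,n}=(1-t^3)H_{\mathbb{S},n}$, using the $3$-periodicity to identify the shifted row.

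The main obstacle is that the theorem is presumably needed as input to prove the normal-element statement, so that argument would be circular. If so, I would derive $H_B$ independently instead. I would compute $\dim B_{n,n+m}$ via Riemann--Roch as above and verify that $(1-t^3)D(t)^{-1}\mathbf 1$ satisfies the same recursion: the ranks and degrees satisfy the mutation recursion $x_{i+3}=d\,x_{i+2}-d'\,x_{i+1}+x_i$, so Euler characteristics $\chi(\cE_{-n-m},\cE_{-n})$ obey a matching linear recursion in $m$. I would then check the initial values $1$, $d$, $d^2-d'$ (the last being $\dim B_{n,n+2}$). The remaining delicate point is confirming Koszulity and exactness of the Koszul complex for $\mathbb{S}$ in the non-equigenerated case. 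For this I would adapt the argument of \cite{morph} degree by degree, using that all structure constants are $3$-periodic.
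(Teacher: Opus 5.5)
\paragraph{Where the proposal falls short.} Your $\mathbb{S}$ half is essentially the paper's route: the Koszul resolution supplied by Theorem~\ref{thm:BPBisKoszul}. That theorem has no equigeneration hypothesis, so nothing needs to be adapted from \cite{morph}. You are also right that the normal-element route to $H_B$ is circular, since Corollary~\ref{cor:cubicDivisor} is deduced from this theorem.

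The genuine gap is in your replacement for $H_B$: as written, it never reaches $D(t)$.
\begin{itemize}
\item Applying the mutation recursion in the source $\cE_{-n-m}$ gives a recursion inside the single series $H_{B,n}$, with coefficients depending on $m \bmod 3$. The matrix $D(t)$ instead encodes a relation coupling $H_{B,n}$ to $H_{B,n+1},H_{B,n+2},H_{B,n+3}$. So ``verify that $(1-t^3)D(t)^{-1}\mathbf 1$ satisfies the same recursion'' is an unproved identity, not a check.
\item Your constant coefficients $d,d'$ and the initial value $d^2-d'$ only hold in the equigenerated case. In general $\dim B_{n,n+2}=\Delta_{-n}\Delta_{-n-1}-\Delta_{-n-2}$.
\item You do not handle the step producing the $t^3$ coefficient, where the $\chi$-recursion fails because $\dim B_{n,n}=1$ but $\chi(\cE_{-n},\cE_{-n})=0$. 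That failure is exactly what produces $1-t^3$ rather than $1$.
\end{itemize}
The missing idea is to fix the source and resolve the target. Apply $\chi(\cE_{-n-i},-)$ to
\[0\to\cE_{-n-3}\to\cE_{-n-2}^{\oplus\Delta_{-n-2}}\to\cE_{-n-1}^{\oplus\Delta_{-n}}\to\cE_{-n}\to 0.\]
For $i>3$ every term is a Hom-dimension. For $i\le 3$ you compute directly, correcting with $\Ext^1$ and Serre duality; this gives constant term $1$ and coefficients $0,0,-1$ for $t,t^2,t^3$. The result is
\[H_{B,n}-\Delta_{-n}tH_{B,n+1}+\Delta_{-n-2}t^2H_{B,n+2}-t^3H_{B,n+3}=1-t^3,\]
which, once periodicity is known, is literally $D(t)\,(H_{B,0},H_{B,1},H_{B,2})^T=(1-t^3)\mathbf 1$.

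\paragraph{Periodicity.} Both of your periodicity arguments need repair.

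For $H_B$, shift-invariance of $\chi$ is a legitimate alternative to the paper's induction. However, ``the sequences are preserved by the helix shift'' is not a proof, because $\cE_i\mapsto\cE_{i+3}$ is defined by $i$-dependent iterated mutations, not by a functor you have in hand. The actual reason is Proposition~\ref{prop.3period}, $\Delta_{i+3}=\Delta_i$. It implies that the linear map on $K_0(X)_{\mathbb{Q}}$ sending $[\cE_0],[\cE_1]$ to $[\cE_3],[\cE_4]$ has determinant $\Delta_4/\Delta_1=1$. That map therefore preserves $\chi$, and by the now-periodic mutation recursion it sends every $[\cE_i]$ to $[\cE_{i+3}]$.

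For $\mathbb{S}$, your claim that its multiplication is $3$-periodic up to shift is unsupported; only dimensions are known to be periodic. Yet writing the Koszul identity in $3\times 3$ form already presupposes $H_{\mathbb{S},n+3}=H_{\mathbb{S},n}$. As in the paper, keep the four-term identity
\[H_{\mathbb{S},n}-\Delta_{-n}tH_{\mathbb{S},n+1}+\Delta_{-n-2}t^2H_{\mathbb{S},n+2}-t^3H_{\mathbb{S},n+3}=1.\]
Then deduce periodicity by induction on the degree of coefficients, using $\Delta_{i+3}=\Delta_i$. The same induction applied to the $B$-relation above gives periodicity of $H_B$ as well.
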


It follows from \cite{bp} that $\mathbb{S}$ is Artin-Schelter regular of global dimension three (see Theorem~\ref{thm:BPBisKoszul}), 
and from our earlier work \cite{morph}, that $\End\underline{\cE}$ is the quotient of $\mathbb{S}$ by a degree 3 family of normal elements (see Corollary~\ref{cor:cubicDivisor}). This generalises the fact that elliptic curves embed as a cubic divisor in $\mathbb{P}^2$ as well as Artin-Tate-Van den Bergh's analogous noncommutative graded result. 

Our general setup raises the possibility of generating counter-examples to Artin's conjecture. Towards this end, we examine the noetherian case where one can form rings of fractions.  Before describing our main result, we recall that a {\it Markov triple} is a three-tuple $(m_{1},m_{2},m_{3})$ of positive integers such that 
$$
m_{1}^{2}+m_{2}^{2}+m_{3}^{2}=3m_{1}m_{2}m_{3}.
$$
We also recall (for later use) that there is a correspondence between Markov triples and positive integer solutions of 
\begin{equation} \label{eqn.markovtrip}
x^{2}+y^{2}+z^{2}=xyz
\end{equation}
defined by multiplication by $3$ \cite[Proposition 2.2]{hundred}.  

Our main result is a classification of three periodic elliptic helices $\underline{\mathcal{E}}$ such that $\End \underline{\cE}$ is noetherian.  We accomplish this in Proposition \ref{prop:noetherianIsMarkov}, Corollary \ref{cor.leftrightnoetherian} and Corollary \ref{cor:MarkovIsMutateLines} as follows:
\begin{theorem} \label{thm.elliptic}
Let $\underline{\mathcal{E}}$ denote a three-periodic elliptic helix. Then $\End \underline{\cE}$ is noetherian if and only if it has polynomial growth. Furthermore, in this case, $r_0:=\rk \mathcal{E}_{0}$, $r_1:=\rk \mathcal{E}_{1}, r_2:=\rk \mathcal{E}_{2}$ form a Markov triple, that is, $r_0^2 + r_1^2 + r_2^2 = 3 r_0r_1r_2$ and 
$$\dim \Hom(\cE_1,\cE_2) = r_0, \ \dim \Hom(\cE_2,\cE_3) = r_1, \ \dim \Hom(\cE_0,\cE_1) = r_2. 
$$
Lastly, $\mathbb{S}^{nc}(\underline{\mathcal{E}})$ is a noetherian GK-dimension three $\mathbb{Z}$-algebra, which is ${\sf Proj}$-equivalent to an elliptic algebra.
\end{theorem}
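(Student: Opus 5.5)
The argument will rest on the Hilbert series formula of Theorem~\ref{thm:introHilbert}. Set $d_i:=\dim\Hom(\cE_i,\cE_{i+1})$; by three-periodicity there are only three numbers $d_0,d_1,d_2$. From the Koszul/AS-regular structure of $\mathbb{S}$ (Theorem~\ref{thm:BPBisKoszul}), the denominator has the shape
\[
D(t)=I-tA+t^2A'-t^3P,
\]
where $A$ is the $3\times 3$ adjacency-type matrix with entries $d_i$ placed on the cyclic superdiagonal, $A'$ is its analogue coming from the dual (quadratic relation) term, and $P$ is the cyclic shift. The first step is to determine when $H_{B,n}$ has polynomial growth. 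Since $(1-t^3)$ is a harmless factor, I will study the spectral radius of the monodromy obtained by composing three consecutive steps. Concretely, I will pass to the $\mathbb{Z}$-algebra Veronese $B^{(3)}$ and its graded-style Hilbert series, whose denominator is a cubic in $s=t^3$ of the form $1-as+as^2-s^3=(1-s)(1-(a-1)s+s^2)$ with $a=d_0d_1d_2-\text{(correction)}$. Polynomial growth then holds exactly when the roots of $1-(a-1)s+s^2$ lie on the unit circle, which gives an inequality on the $d_i$. The other case produces a real root of modulus $>1$, hence exponential growth. The degree of the pole at $1$ in the polynomial growth case also yields GK-dimension three for $\mathbb{S}$.

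The second step is to relate the $d_i$ to ranks. By Riemann--Roch on the elliptic curve, for stable bundles with $\Hom$ concentrated in degree zero we have $d_i=\chi(\cE_i,\cE_{i+1})=r_ir_{i+1}(\mu_{i+1}-\mu_i)$. The helix condition (mutation $\cE_{i+3}$ from $\cE_i,\cE_{i+1},\cE_{i+2}$) gives the rank recursion $r_{i+3}=d_{i+1}r_{i+2}-r_i$ or a close variant, together with the periodicity $r_{i+3}=r_i$ forced by three-periodicity of degrees modulo twisting. I will combine this with the additivity of degree along the defining exact sequences, which makes $\sum_i \deg$ over a period equal to three times a fixed line bundle degree. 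Solving these equations shows that the boundary case of the growth inequality, which is the only one that can occur with integers, forces $d_0=r_2$, $d_1=r_0$, $d_2=r_1$ and $r_0^2+r_1^2+r_2^2=3r_0r_1r_2$. This is Proposition~\ref{prop:noetherianIsMarkov}. The main obstacle is this computation: correctly pinning down the characteristic polynomial of the three-step transfer matrix and showing that integrality plus positivity leave only the Markov solutions. I would first try to reduce to equation \eqref{eqn.markovtrip} via $x=3r_0$ and similar substitutions.

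For the equivalence, noetherian implies polynomial growth is the direction I would prove using the standard argument that a noetherian connected $\mathbb{Z}$-algebra generated in degree one with exponential growth contains a free subalgebra on two generators in a suitable Veronese; this contradicts the ascending chain condition via the ideal they generate. Conversely, in the Markov case I will use that Markov triples are reached from $(1,1,1)$ by mutations, and that helix mutation corresponds to this (Corollary~\ref{cor:MarkovIsMutateLines}). Every such helix is then obtained by mutating a helix of line bundles, and since mutation preserves the derived category and hence ${\sf Proj}$, $B$ and $\mathbb{S}$ are ${\sf Proj}$-equivalent to the Bondal--Polishchuk/ATV case. There, $\mathbb{S}$ is an elliptic (Sklyanin-type) algebra, known to be noetherian. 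Noetherianity of $B$ then follows from that of $\mathbb{S}$ via the normal family of Corollary~\ref{cor:cubicDivisor}, and left/right noetherianity as in Corollary~\ref{cor.leftrightnoetherian}.
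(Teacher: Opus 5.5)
Your outline has genuine gaps in both directions of the equivalence and in the numerical step.

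\textbf{Polynomial growth implies noetherian.} Mutating to a line-bundle helix and invoking a ${\sf Proj}$-equivalence cannot transport noetherianity to $\mathbb{S}$ or $B$. A ${\sf Proj}$-equivalence says nothing directly about chains of right ideals in the $\mathbb{Z}$-algebra. Also, ``mutation preserves the derived category'' is a statement about $D^b(X)$ (the noncommutative cubic), not about the noncommutative plane ${\sf Proj}\,\mathbb{S}$. The paper obtains the ${\sf Proj}$-equivalence of Corollary~\ref{cor:MarkovIsMutateLines} from equal $3$-Veroneses of shifted algebras, and that step takes right noetherianity as an \emph{input}, so your argument is circular. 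Mutation to line bundles is the right tool only for the final ${\sf Proj}$ statement. The missing idea is ampleness:
in the Markov case the ranks are $3$-periodic and $A^n\vec d=\vec d+3n\vec\Delta$, so slopes tend to $-\infty$ as $n\to-\infty$. Serre duality then gives $\Ext^1(\cE_i,\mathcal{M})=0$ for $i\ll 0$, and Lemma~\ref{lem:amplenessForBundles} makes $\underline\cE$ ample (Proposition~\ref{prop.ellipticample}). Theorem~\ref{theorem.polish} yields right noetherianity of $B$, and the dual helix (again of Markov type) gives the left side via Proposition~\ref{prop.leftright}. Only afterwards is noetherianity lifted from $B$ to $\mathbb{S}$ via the degree-three normal family (Corollary~\ref{cor:cubicDivisor}, Proposition~\ref{prop.hbt}); this is the opposite direction to the one you use.

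\textbf{Noetherian implies polynomial growth.} The free-subalgebra claim is not a standard fact. Exponential growth need not produce a free subalgebra, and a free subalgebra alone does not violate the ascending chain condition. What works is the Stephenson--Zhang count of Proposition~\ref{prop:noetherianIsMarkov}. Set $h_n:=\dim e_0Be_{3n}$ and choose $l_1<l_2<\cdots$ with $h_{l_s}>\sum_{i<s}h_{l_s-l_i}$. The $3$-periodicity of the Hilbert series gives $\dim x_iBe_{3l_s}\le h_{l_s-l_i}$, producing a strictly ascending chain $x_1B\subsetneq x_1B+x_2B\subsetneq\cdots$ in $e_0B$.

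\textbf{The numerical step.} Integrality does not force the boundary case. The unit-circle condition allows $\Delta_1^2+\Delta_2^2+\Delta_3^2-\Delta_1\Delta_2\Delta_3\in\{0,1,2,3,4\}$; for instance $(1,1,1)$ and $(2,2,2)$ give $2$ and $4$. These cases are excluded only by the sign condition: $\chi(\cE_j,\cE_{j+3n})$ is $>0$ for $n>0$ and $<0$ for $n<0$, which is incompatible with the periodic or alternating behaviour produced by roots of unity (Proposition~\ref{prop.whyMarkov}).

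There is also no ``periodicity modulo twisting'' on $X$; that is a feature of planar helices. Ranks are not $3$-periodic in general (compare Section~\ref{sec:seedGeneratesHelix}), and periodicity is a \emph{conclusion} in the Markov case:
\begin{itemize}
\item $A$ is a single unipotent Jordan block (Lemma~\ref{lem.AminusI}).
\item The cross-product condition puts $\vec r$ in $\ker(A-I)^2=\vec\nabla^{\perp}$.
\item Positivity of $A^n\vec r$ for all $n\in\mathbb{Z}$ forces $\vec r\in\ker(A-I)=\mathbb{Q}\vec\Delta$.
\item The coordinates of $\vec\nabla$ have gcd $3$, leaving $\vec r=\tfrac13\vec\Delta$ or $\vec r=\vec\Delta$.
\item The latter is ruled out by Lemma~\ref{lemma.three} together with coprimality of $\deg\cE_0$ and $\rk\cE_0$ (Proposition~\ref{prop:risdelta}, Theorem~\ref{thm:onlyMarkov}).
\end{itemize}

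The outcome is $\vec\Delta=3\vec r$. So the equalities you aim for (in your notation $d_0=r_2$, $d_1=r_0$, $d_2=r_1$ with $d_i=\dim\Hom(\cE_i,\cE_{i+1})$) are false. The correct values are $\dim\Hom(\cE_0,\cE_1)=3r_2$, $\dim\Hom(\cE_1,\cE_2)=3r_0$ and $\dim\Hom(\cE_2,\cE_3)=3r_1$. The displayed formulas in the statement drop this factor: a helix of line bundles with degrees increasing by $3$ has Hom-dimensions $3$ and ranks $1$.
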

In particular, we do not find a counterexample to Artin's conjecture.  The appearance of Markov triples is, in retrospect, not surprising, since mutation of the foundations of a helix yields another helix (see \cite{markov}).   For essentially the same reason, Markov numbers appear in \cite{numvdb}.

We are also able to analyze the planar helices from \cite{markov} using our techniques.  In particular, we prove, in Section \ref{section.helicesP2}, that all planar helices are ample and restrict to elliptic helices.  However, the elliptic helices that arise in this manner correspond to the usual embedding of an elliptic curve in $\mathbb{P}^{2}$.

Our final task is to generate further examples of elliptic helices $\underline{\cE}$ of vector bundles with exponential growth. The helix is completely determined, through a recursive procedure, by the {\em seed} or {\em foundation} $(\cE_0,\cE_1,\cE_2)$ which is a triple of stable bundles of increasing slope. However, not all such triples give rise to elliptic helices and determining which ones do involves a priori checking that an infinite number of recursively defined objects in the derived category are actually bundles. In Proposition~\ref{prop:growthCriterionForGeneration}, we give a relatively effective sufficient criterion for a seed to generate an elliptic helix. It is in terms of the {\em numerical seed}
$([\cE_0],[\cE_1],[\cE_2])$ where $[\cE] := {\deg(\cE) \choose \rk(\cE)}$. It is strong enough to generate the following elliptic helices.

\begin{proposition}
Consider one of the following numerical seeds:
\begin{enumerate}
    \item $({ -r_0-r_2-a \choose r_0}, {0 \choose 1}, {r_0+r_2+a \choose r_2})$ for some $a,r_0,r_2 \in \mathbb{Z}_{\geq 1}$ such that $r_0,r_2 \geq a+1, \ r_0+r_2 > a^2+a+1$ and, of course $\gcd(r_0+r_2+a, r_i) = 1$ for $i = 0,2$, OR
    \item $({ -d-a-1 \choose d+a-r}, {0 \choose 1}, {d \choose r})$ where $a,d,r \in \mathbb{N}$ are chosen so that $\gcd(d+a+1,d+a-r) = 1 = \gcd(d,r)$. Suppose further that $d>(a+1)(2r+1)+4$ and $\min\{ d+a-r, r \} > 2$. 
\end{enumerate}
In either case, we obtain an elliptic helix with that numerical seed.  
\end{proposition}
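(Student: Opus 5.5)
The plan is to deduce both cases from the sufficient criterion of Proposition~\ref{prop:growthCriterionForGeneration}. That criterion is phrased purely in terms of the numerical seed $([\cE_0],[\cE_1],[\cE_2])$, so the whole argument becomes an arithmetic verification on the classes $[\cE]={\deg\cE \choose \rk\cE}$.

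First I will fix the numerical data. The existence of a foundation is automatic: on $X$ there are stable bundles of any coprime degree and rank, and the gcd hypotheses in (1) and (2) say exactly that each entry of the seed is the class of a stable bundle. In both cases the slopes are strictly increasing, since $\cE_0$ has negative slope, $\cE_1$ has slope $0$ and $\cE_2$ has positive slope. Next I will record the Euler pairings, using $\chi(v,w)=r_vd_w-r_wd_v$ for $v={d_v\choose r_v}$, $w={d_w\choose r_w}$; for a pair of stable bundles of increasing slope this equals $\dim\Hom$.

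In case (1), put $s=r_0+r_2+a$. Then
$$\chi([\cE_0],[\cE_1])=s,\qquad \chi([\cE_1],[\cE_2])=s,\qquad \chi([\cE_0],[\cE_2])=s(r_0+r_2).$$
In case (2) the analogous computation gives
$$\chi([\cE_0],[\cE_1])=d+a+1,\qquad \chi([\cE_1],[\cE_2])=d,\qquad \chi([\cE_0],[\cE_2])=d(d+a-r)+r(d+a+1).$$
These are the quantities that enter the numerical mutation recursion generating $[\cE_3],[\cE_4],\dots$. That recursion is the numerical shadow of the defining triangles of the helix, in which $\cE_{i+3}$ is obtained from $\cE_i$ by mutating across $\cE_{i+1},\cE_{i+2}$. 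I will write it out explicitly as a linear recursion on the classes together with the induced recursion on consecutive pairings.

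The main step is to check that the growth inequality demanded by Proposition~\ref{prop:growthCriterionForGeneration} holds at the seed. I expect this inequality to compare $\chi([\cE_0],[\cE_2])$ with the product (or a suitable combination) of $\chi([\cE_0],[\cE_1])$ and $\chi([\cE_1],[\cE_2])$, together with rank positivity for $[\cE_3]$. I then need that it propagates through all subsequent mutations, which should follow from the inductive structure of that proposition.

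In case (1), the inequality should reduce, after dividing by $s$, to a quadratic comparison in $a$. The hypotheses $r_0,r_2\ge a+1$ and $r_0+r_2>a^2+a+1$ are exactly what make the first new rank $\rk\cE_3$ positive and make the pairings increase from the seed onward. So I will expand $[\cE_3]$ and $[\cE_{-1}]$ explicitly and show their ranks are at least the corresponding seed ranks.

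In case (2), the rank-$1$ middle bundle again makes the pairings linear in $d$. The bound $d>(a+1)(2r+1)+4$ should dominate the error terms coming from $r$ and $a$. The condition $\min\{d+a-r,r\}>2$ should exclude the small-rank degenerate cases, where a mutated object could fail to be a bundle or where the growth is only polynomial (the Markov situation of Theorem~\ref{thm.elliptic}).

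I expect the main obstacle to be this verification in case (2). There are three independent parameters there, and the inequality must be checked in both mutation directions, forward to $\cE_3$ and backward to $\cE_{-1}$, because the seed is not symmetric under $i\mapsto -i$. My first attempt will be to write each required inequality as a polynomial in $d$ with coefficients in $a$ and $r$. I will then show its leading term is positive and its value at $d=(a+1)(2r+1)+5$ is already positive, using $r\ge 3$ and $d+a-r\ge 3$ to bound the lower-order terms. Monotonicity in $d$ will then finish the argument.
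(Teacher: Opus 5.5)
You picked the right tool, Proposition~\ref{prop:growthCriterionForGeneration}, and your three Euler pairings are correct. However, the proposal never states what that criterion requires, so the verification that makes up the whole proof is missing, and guesses that do not match the criterion take its place. The hypotheses are:
\begin{itemize}
\item $r_0,r_2\ge r_1$;
\item $r'_1=\Delta_2r_1-r_2>0$ and $r'_2=\Delta_1r_1-r_0>0$;
\item there is a real $l>1$ with $\Delta_i-\Delta_j/l>l$ for all distinct $i,j$, and with $r_0/r_1,\ r_2/r_1>l$.
\end{itemize}
The quantity that controls everything, and which you never compute, is $\Delta_3=\Delta_0=\Delta_1\Delta_2-\chi(E_0,E_2)$. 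It equals $as$ in case (1), with $s=r_0+r_2+a$, and $d-(a+1)r$ in case (2). With this in hand the proof takes a few lines.
\begin{itemize}
\item The numbers $r'_1,r'_2$ are $r_0+a,\ r_2+a$ in case (1) and $d-r,\ r+1$ in case (2), so they are positive.
\item In case (1) take $l=a+1-\varepsilon$. The binding inequality $s-as/l>l$ tends to $s>(a+1)^2$ as $\varepsilon\to0$, which is $r_0+r_2>a^2+a+1$. The condition $r_0,r_2>l$ is $r_0,r_2\ge a+1$.
\item In case (2) take $l=2$. The binding inequality $\Delta_0-\Delta_1/2>2$ is exactly $d>(a+1)(2r+1)+4$, and $r_0,r_2>2$ is exactly $\min\{d+a-r,r\}>2$.
\end{itemize}
So the hypotheses of the statement are precisely the hypotheses of the criterion for these choices of $l$. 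No polynomial-in-$d$ estimates are needed.

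The substitute checks you propose would not prove the result. Showing that $\rk\cE_3$ and $\rk\cE_{-1}$ dominate the seed ranks, or that finitely many expressions in $d$ are positive, gives no control over the infinitely many later ranks $r_i,r'_i$. Their positivity for all $i$ is what \cite[Theorem~7.14]{morph} needs. The criterion works because the ratio bound $r_{i+1}/r_i>l$ is an invariant that propagates: $r_{i+3}/r_{i+2}=r_i/r_{i+2}-\Delta_{i+1}r_{i+1}/r_{i+2}+\Delta_i>\Delta_i-\Delta_{i+1}/l>l$, and symmetrically in the backward direction.

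Your readings of what the hypotheses do are also off.
\begin{itemize}
\item In case (1), $\rk\cE_3=r_0-\Delta_1r_1+\Delta_3r_2=asr_2-r_2-a$ is positive with no hypotheses at all. So $r_0,r_2\ge a+1$ and $r_0+r_2>a^2+a+1$ are not what makes $\rk\cE_3$ positive; they come from the choice of $l$.
\item In case (2), $\min\{d+a-r,r\}>2$ has nothing to do with excluding the Markov case or degenerate mutations. It is literally the condition $r_0/r_1,\ r_2/r_1>l=2$.
\end{itemize}
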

\vskip .1in
\noindent{\it Notation and conventions:}  Throughout, ${\sf C}$ will denote a $k$-linear, $\operatorname{Hom}$-finite abelian category.  We will often write ${}^{j}(\mathcal{A}, \mathcal{B})$ for $\operatorname{Ext}^{j}_{\sf C}(\mathcal{A},\mathcal{B})$, and we will let ${\sf Coh }Y$ (resp. ${\sf Qcoh }Y$) denote the category of coherent (resp. quasicoherent) sheaves over a variety $Y$.  If ${\sf D}$ and ${\sf E}$ are categories, we will write ${\sf D} \equiv {\sf E}$ if there is an equivalence of categories between ${\sf D}$ and ${\sf E}$.

\section{Basic notions from noncommutative algebraic geometry}
We will show that planar helices and elliptic helices ``of Markov type" are ample in Lemma \ref{lem:twistBundleAmple} and Theorem \ref{prop.ellipticample}, respectively.  We will use this fact to prove that the algebras associated to a helix $\underline{\mathcal{E}}$, $\mathbb{S}^{nc}(\underline{\mathcal{E}})$ and $\operatorname{End }(\underline{\cE})$, are noetherian.  For the readers convenience, we recall the relevant results from noncommutative algebraic geometry (in the $\mathbb{Z}$-algebra context) here.  

The definition of ampleness we will need is (superficially) distinct from that in \cite{polishproj}, as our indexing conventions are different.  We let $\underline{\mathcal{E}} = (\mathcal{E}_{i})_{i \in \mathbb{Z}}$ denote a sequence of objects in ${\sf C}$ such that $\operatorname{Hom}(\mathcal{E}_{i},\mathcal{E}_{i})=k$.

We call $\underline{\mathcal{E}}$
\begin{itemize}
\item{} {\it projective} if for every epimorphism $f: \mathcal{M} \rightarrow \mathcal{N}$ in ${\sf C}$ there exists an integer $n$ such that $\operatorname{Hom}_{\sf C}(\mathcal{E}_{-i},f)$ is surjective for all $i>n$, and

\item{} {\it ample for ${\sf C}$} if it is projective, and if for every $\mathcal{M} \in {\sf C}$ and every $m \in \mathbb{Z}$ there exists a surjection
    $$
    \bigoplus_{j=1}^{s}\mathcal{E}_{-i_{j}} \rightarrow \mathcal{M}
    $$
for some $i_{1},\ldots, i_{s}$ with $i_{j} \geq m$ for all $j$.
\end{itemize}
The significance of this concept for us is that ample sequences for ${\sf C}$ yield homogeneous coordinate rings of ${\sf C}$ \cite[Theorem 2.4]{polishproj}.  In order to recall this result (in Theorem \ref{theorem.polish}), we remind the reader of some terminology from noncommutative algebraic geometry.  Suppose $C$ is a positively graded, connected, locally finite $\mathbb{Z}$-algebra.  We let ${\sf Gr }C$ denote the category of graded right $C$-modules.  A graded right $C$-module $M$ is {\it right bounded} if $M_{n} = 0$ for all $n >> 0$.  We let ${\sf Tors }C$ denote the full subcategory of ${\sf Gr }C$ consisting of modules whose elements $m$ have the property that the right $C$-module generated by $m$ is right bounded.  If ${\sf Tors }C$ is a localizing subcategory of ${\sf Gr }C$, which occurs, for example, if $C$ is generated in degree one, then we may form the quotient ${\sf Gr }C/{\sf Tors }C =: {\sf Proj }C$.  

We let $P_{i} := \bigoplus_{j}C_{ij} = e_{i}C$ where $e_i$ denotes the unit in the ring $C_{ii}$.  We say that $M \in {\sf Gr }C$ is {\it finitely generated} if there is a surjection $P \rightarrow M$ where $P$ is a finite direct sum of modules of the form $P_{i}$.  We say $M$ is {\it coherent} if it is finitely generated and if for every homomorphism $f: P \rightarrow M$ with $P$ a finite direct sum of $P_{i}$'s, $\operatorname{ker }f$ is finitely generated.  We denote the full subcategory of ${\sf Gr }C$ consisting of coherent modules by ${\sf coh }C$.  By \cite[Proposition 1.1]{polishproj}, ${\sf coh }C$ is an abelian subcategory of ${\sf Gr }C$ closed under extensions.

We let ${\sf tors }C$ denote the full subcategory of ${\sf coh }C$ consisting of right-bounded modules.  One can check that this is a Serre subcategory of ${\sf coh }C$.  If $C$ is graded coherent, we define
$$
{\sf proj }C := {\sf coh }C/{\sf tors }C,
$$
which is abelian.

We will need the following version of \cite[Proposition 2.3(ii) and Theorem 2.4]{polishproj} and \cite[Theorem~4.5]{az}. In the statement, we use the notation $C_{\geq 0}:= \bigoplus_{i \leq j} C_{ij}$.

\begin{theorem} \label{theorem.polish}
If $\underline{\mathcal{E}}$ is an ample sequence then $\operatorname{End}(\underline{\mathcal{E}})_{\geq 0}$ is right coherent, and there is an equivalence
$$
{\sf C} \equiv {\sf proj }\operatorname{End}(\underline{\mathcal{E}})_{\geq 0}.
$$
If furthermore, the category {\sf C} is noetherian, then $\operatorname{End}(\underline{\mathcal{E}})_{\geq 0}$ is right noetherian. 
\end{theorem}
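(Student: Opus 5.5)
The plan is to reduce to the graded case treated by Polishchuk and Artin--Zhang, adapting the index conventions; the statement is essentially a translation of \cite[Proposition 2.3(ii), Theorem 2.4]{polishproj} and \cite[Theorem~4.5]{az}. Write $A := \operatorname{End}(\underline{\mathcal{E}})_{\geq 0}$, with $A_{ij} = \Hom(\mathcal{E}_{-j},\mathcal{E}_{-i})$ for $i \leq j$. Define the functor $\Gamma : {\sf C} \to {\sf Gr }A$ by $\Gamma(\mathcal{M})_j := \Hom_{\sf C}(\mathcal{E}_{-j},\mathcal{M})$. The right $A$-action is by precomposition: an element of $A_{ij}$ sends $\Gamma(\mathcal{M})_i$ to $\Gamma(\mathcal{M})_j$. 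Our convention ($\mathcal{E}_{-i}$ with $i \to \infty$) matches Polishchuk's after the substitution $i \mapsto -i$, so the projective and ample hypotheses become exactly his.

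\textbf{Coherence.} First, each $P_i = e_iA$ is identified with $\Gamma(\mathcal{E}_{-i})_{\geq i}$. Next, ampleness gives, for every $\mathcal{M}$, a surjection $\bigoplus \mathcal{E}_{-i_j} \to \mathcal{M}$. Projectivity then shows that the induced map $\bigoplus P_{i_j} \to \Gamma(\mathcal{M})$ is surjective in degrees $\gg 0$. Hence the truncations $\Gamma(\mathcal{M})_{\geq n}$ are finitely generated for $n \gg 0$.

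Now take a map $f : P \to \Gamma(\mathcal{M})_{\geq n}$ with $P$ a finite sum of $P_i$'s. By Yoneda, $f$ comes from a map $F : \bigoplus \mathcal{E}_{-i} \to \mathcal{M}$ in ${\sf C}$. Then $\ker f$ agrees in high degrees with $\Gamma(\ker F)$, by left exactness of $\Hom$, and so it is finitely generated modulo a right-bounded piece. This right-bounded piece must also be controlled, to get genuine finite generation rather than only finite generation in high degrees. The argument is to show that $A$ is locally finite, because ${\sf C}$ is Hom-finite. Then, in any single degree, the finitely many missing homogeneous components are finite dimensional, and adding them keeps $\ker f$ finitely generated. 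This gives right coherence of each $P_i$ and hence of $A$.

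\textbf{Equivalence.} The composite $\pi\Gamma : {\sf C} \to {\sf proj }A$ is exact. It is exact modulo torsion, because projectivity makes $\Gamma$ carry epimorphisms to maps whose cokernels are right bounded. The quasi-inverse sends the class of a coherent module $N$ to $\mathcal{M}$, constructed as follows. Take a presentation $P' \to P \to N \to 0$ by sums of $P_i$'s, realize it (after truncation, via Yoneda) as $\bigoplus\mathcal{E}_{-i'} \to \bigoplus\mathcal{E}_{-i}$ in ${\sf C}$, and set $\mathcal{M}$ to be the cokernel.

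Three things must be checked. Well-definedness follows because changing the presentation changes things only in bounded degrees. Full faithfulness is the isomorphism $\Hom_{\sf C}(\mathcal{M},\mathcal{N}) \cong \lim_n \Hom_A(\Gamma(\mathcal{M})_{\geq n},\Gamma(\mathcal{N}))$. I would prove it by taking an ample resolution of $\mathcal{M}$ and using projectivity to kill the higher obstructions. Essential surjectivity is by construction.

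\textbf{Noetherianity.} If ${\sf C}$ is noetherian, take an ascending chain of graded submodules of $P_i$. Apply the quasi-inverse to get an ascending chain of subobjects of $\mathcal{E}_{-i}$, which stabilizes. Hence the original chain stabilizes modulo torsion, i.e. in high degrees. Local finiteness then handles the finitely many remaining low degrees, as in \cite[Theorem~4.5]{az}.

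The main obstacle is full faithfulness, in particular showing that $\Hom$ in the quotient category is computed by large truncations. This is where the projectivity hypothesis must be applied to the kernel of an ample cover, so that maps defined in high degree lift; it is the technical heart of Polishchuk's argument, and I would follow it verbatim after reindexing.
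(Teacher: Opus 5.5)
Your coherence and equivalence parts match the paper, which simply cites \cite[Proposition 2.3(ii), Theorem 2.4]{polishproj}; your sketches are reasonable and likewise defer full faithfulness to Polishchuk.

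\textbf{The gap is in the noetherian part.} From the equivalence you correctly get that an ascending chain $I_0 \subseteq I_1 \subseteq \cdots$ in $P_i = e_iA$ has $I_j/I_{j_0}$ torsion for all $j \geq j_0$. (First reduce to chains of finitely generated, hence coherent, submodules, so that the chain lives in ${\sf proj}\,A$ at all.) But ``stabilizes modulo torsion, i.e.\ in high degrees'' is a non sequitur. Each $I_j/I_{j_0}$ is right bounded, but its bound may go to infinity with $j$. Then there is no fixed finite window of degrees to which local finiteness can be applied. Without further input the implication genuinely fails: a non-finitely generated torsion submodule of $e_iA$ would give a strictly increasing chain of finitely generated submodules that is zero in ${\sf proj}$. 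Pointing to \cite[Theorem~4.5]{az} does not supply the missing step. That result is for graded algebras built from an autoequivalence, and the adaptation to $\mathbb{Z}$-algebras is exactly what has to be written.

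\textbf{The missing idea is a saturation argument giving a uniform bound.} Let $J \subseteq \mathcal{E}_{-i}$ be the subobject corresponding to the stable value of the chain. Put $\Gamma_{\geq i} := \bigoplus_{n \leq -i}\Hom(\mathcal{E}_n,-)$, so that
\[
0 \to \Gamma_{\geq i}J \to e_iA \to \Gamma_{\geq i}(\mathcal{E}_{-i}/J)
\]
is exact.
\begin{itemize}
\item Ampleness makes $\Gamma_{\geq i}(\mathcal{X})$ torsion-free for every $\mathcal{X}$. If $0 \neq \phi\colon \mathcal{E}_j \to \mathcal{X}$, choose a surjection $\bigoplus_r \mathcal{E}_{j_r} \to \mathcal{E}_j$ with all $j_r$ as negative as you like. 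Then $\phi\psi \neq 0$ for some component $\psi\colon \mathcal{E}_{j_r} \to \mathcal{E}_j$.
\item Hence $e_iA/\Gamma_{\geq i}J$ is torsion-free. Each $I_j$ agrees with $\Gamma_{\geq i}J$ modulo torsion, so its image in this quotient is torsion, hence zero, and $I_j \subseteq \Gamma_{\geq i}J$ for all $j \geq j_0$.
\item $\Gamma_{\geq i}J$ is finitely generated, by your own coherence step. So $\Gamma_{\geq i}J/I_{j_0}$ is finitely generated and torsion, hence right bounded. By local finiteness it is finite dimensional.
\end{itemize}
This confinement of the whole chain inside a fixed finite-dimensional quotient is what forces it to stabilize. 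It is the step your proposal skips; the paper proves the torsion-freeness as a separate lemma for exactly this purpose.
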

\begin{proof}
The coherence of $A:= \operatorname{End}(\underline{\mathcal{E}})_{\geq 0}$ \cite[Proposition 2.3(ii)]{polishproj} whilst the category equivalence is \cite[Theorem 2.4]{polishproj}. We now assume that {\sf C} is noetherian and show $A$ is right noetherian. To this end, let $\{I_j\}_{j \in \mathbb{N}}$ be an increasing chain of submodules of $e_{i}A$. By the category equivalence and the fact that {\sf C} is noetherian, this chain stabilises in {\sf proj}~$A$ so we may as well assume that $I_{j+1}/I_j$ is torsion for all $j$ and they all correspond to some subobject $J < \mathcal{E}_{-i}$. To the exact sequence 
$$0 \to J \to \mathcal{E}_{-i} \to \mathcal{E}_{-i}/J \to 0,$$
we apply the left exact functor $\Gamma_{\geq i} = \oplus_{n \leq -i} \Hom_{\sf C}(E_n,-)$ to obtain the exact sequence of $A$-modules
$$0 \to \Gamma_{\geq i}J \to e_{i}A \to \Gamma_{\geq i}(\mathcal{E}_{-i}/J).$$
From the lemma below, we know that $\Gamma_{\geq i} J$ is saturated in $e_iA$ so $\Gamma_{\geq i} J$ contains every $I_j$ and the quotient is torsion. Also, $\Gamma_{\geq i}J$ is coherent and in particular, finitely generated by \cite[Proposition~2.3]{polishproj} so $\Gamma_{\geq i} J/I_0$ is finite dimensional as a $k$-space and the chain $\{I_j\}$ must stabilise.
\end{proof}
\begin{lemma}
If $\underline{\mathcal{E}}$ is an ample sequence in {\sf C} then $\Gamma_{\geq i} X$ is torsion-free for every $X \in {\sf C}$. 
\end{lemma}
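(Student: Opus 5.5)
We have to show that $\Gamma_{\geq i}X=\bigoplus_{n\le -i}\Hom_{\sf C}(\mathcal{E}_n,X)$ has no nonzero torsion element. So fix a homogeneous element $f\in\Hom_{\sf C}(\mathcal{E}_{n},X)$, viewed in degree $-n$ with $-n\geq i$, and suppose the right submodule $fA$ is right bounded, i.e.\ $f\circ g=0$ for all $g\in A_{-n,m}=\Hom(\mathcal{E}_{-m},\mathcal{E}_n)$ with $m\gg 0$. The plan is to use the ampleness of $\underline{\mathcal{E}}$ to produce a surjection onto $\mathcal{E}_n$ built from the $\mathcal{E}_{-m}$ with $m$ large. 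Every composite through this surjection vanishes, so $f$ itself vanishes.

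First, choose $N$ so that $fA_{-n,m}=0$ for all $m\geq N$. By the second clause of ampleness, applied to $\mathcal{M}=\mathcal{E}_n$ with the bound $N$, there is an epimorphism
$$\pi=(g_1,\dots,g_s):\bigoplus_{j=1}^s \mathcal{E}_{-i_j}\to\mathcal{E}_n, \qquad i_j\geq N \text{ for all } j.$$
Each component $g_j\in\Hom(\mathcal{E}_{-i_j},\mathcal{E}_n)$ is an element of $A_{-n,i_j}$. For $i_j\ge -n$ this is the indexing convention in the definition of $A$. If some $i_j<-n$ we may simply enlarge $N$ beforehand to exceed $-n$, since ampleness allows any $m$. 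Hence $f\circ g_j=0$ for every $j$, that is, $f\circ\pi=0$. Because $\pi$ is an epimorphism in the abelian category ${\sf C}$, it follows that $f=0$. So $\Gamma_{\geq i}X$ is torsion-free.

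The only point needing care is the bookkeeping of indices and the sign conventions, namely that $A_{ab}=\Hom(\mathcal{E}_{-b},\mathcal{E}_{-a})$ and that right multiplication by $A$ on $\Gamma_{\ge i}X$ is precomposition. I must check that "right bounded" means the components $fA_{-n,m}$ vanish for large $m$, so that the relevant maps come from $\mathcal{E}_{-m}$ with $m$ large. Once this is matched correctly with the surjection supplied by ampleness, the argument is formal. The projectivity half of ampleness is not needed here.
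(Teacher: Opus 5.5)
Your proof is correct and is essentially the paper's argument in contrapositive form. The paper takes a nonzero $\phi\in\Hom_{\sf C}(\mathcal{E}_j,X)$ and uses an ample surjection $\Psi\colon\bigoplus_r\mathcal{E}_{j_r}\to\mathcal{E}_j$ with all $j_r$ arbitrarily negative to find a component $\psi$ with $\phi\circ\psi\neq 0$; you instead assume every such composite vanishes and conclude $f=0$ because $\pi$ is an epimorphism.
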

\begin{proof}
Consider $0 \neq \phi \in \Hom_{\sf C}(\mathcal{E}_{j},X) \subseteq \Gamma_{\geq i} X$ for some $j \leq -i$. By ampleness, given any $m < j$ we can find $j_1,\ldots, j_l <m$ and a surjection $\Psi\colon \oplus_r \mathcal{E}_{j_r} \to \mathcal{E}_j$ so $\phi\circ \Psi \neq 0$. It follows that there is a component $\psi\colon \mathcal{E}_{j_r} \to \mathcal{E}_j$ of $\Psi$ such that $\phi\circ \psi \neq 0$ so $\Gamma_{\geq i} X$ is indeed torsion-free. 
\end{proof}
We next examine left finiteness conditions on $\operatorname{End}(\underline{\mathcal{E}})_{\geq 0}$.  To this end, we recall (from \cite{MN}) that if $C$ is a $\mathbb{Z}$-algebra, then the {\it opposite $\mathbb{Z}$-algebra of $C$} is the algebra denoted $C^o$ and with $C^o_{ij}:=C_{-j, -i}$, and with multiplication defined as follows:  for $a^o\in C^o_{jk}=C_{-k, -j}, b^o\in C^o_{ij}=C_{-j, -i}$, $b^oa^o=ab \in C_{-k, -i}=C^o_{ik}$.  We also note that, by \cite[Proposition 2.2]{MN}, the category of graded right $C^{o}$-modules is equivalent to the category of graded left $C$-modules.  In the next result, we will invoke the term {\it duality} on ${\sf C}$, by which we mean a $k$-linear contravariant functor $D:{\sf C} \rightarrow {\sf C}$ such that $D^2 \cong \mbox{id}_{\sf C}$.

\begin{proposition} \label{prop.leftright}
Suppose $D$ is a duality on ${\sf C}$, and let $\underline{\mathcal{E}}$ be an ample sequence on ${\sf C}$.  Let $D(\underline{\mathcal{E}})$ denote the sequence of objects with $D(\underline{\mathcal{E}})_{i}:= D(\mathcal{E}_{-i})$.  If $D(\underline{\mathcal{E}})$ is ample, then $\operatorname{End}(\underline{\mathcal{E}})_{\geq 0}$ is left and right coherent.  Furthermore, if ${\sf C}$ is noetherian then $\operatorname{End}(\underline{\mathcal{E}})_{\geq 0}$ is left and right noetherian.     
\end{proposition}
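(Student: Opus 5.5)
Right coherence (and right noetherianity when ${\sf C}$ is noetherian) of $A:=\operatorname{End}(\underline{\mathcal{E}})_{\geq 0}$ is already given by Theorem~\ref{theorem.polish}, since $\underline{\mathcal{E}}$ is ample. So the plan is to get the left-sided statements by applying Theorem~\ref{theorem.polish} to the ample sequence $D(\underline{\mathcal{E}})$, and then to identify $\operatorname{End}(D(\underline{\mathcal{E}}))_{\geq 0}$ with the opposite $\mathbb{Z}$-algebra $A^o$. Graded right $A^o$-modules are the same as graded left $A$-modules by \cite[Proposition 2.2]{MN}. Hence right coherence (resp.\ right noetherianity) of $A^o$ translates into left coherence (resp.\ left noetherianity) of $A$.

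First note that $D(\underline{\mathcal{E}})$ satisfies the standing hypothesis $\Hom(D\mathcal{E}_{-i},D\mathcal{E}_{-i}) = k$. This holds because $D$ is a $k$-linear contravariant functor with $D^2\cong \mathrm{id}$, so $D$ induces a $k$-linear anti-isomorphism
$$\Hom_{\sf C}(\mathcal{A},\mathcal{B}) \to \Hom_{\sf C}(D\mathcal{B},D\mathcal{A}).$$
Write $\mathcal{F}_i := D(\mathcal{E}_{-i})$ and recall the convention $\operatorname{End}(\underline{\mathcal{F}})_{ij} = \Hom(\mathcal{F}_{-j},\mathcal{F}_{-i})$. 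Then
$$\operatorname{End}(\underline{\mathcal{F}})_{ij} = \Hom(D\mathcal{E}_{j}, D\mathcal{E}_{i}) \cong \Hom(\mathcal{E}_{i},\mathcal{E}_{j}) = \operatorname{End}(\underline{\mathcal{E}})_{-j,-i} = \operatorname{End}(\underline{\mathcal{E}})^o_{ij}.$$
Since $D$ reverses composition, these isomorphisms intertwine the multiplication of $\operatorname{End}(\underline{\mathcal{F}})$ with the opposite multiplication $b^oa^o = ab$. The condition $i\le j$ is preserved, since $i\le j$ iff $-j\le -i$. So we get an isomorphism of $\mathbb{Z}$-algebras $\operatorname{End}(\underline{\mathcal{F}})_{\geq 0}\cong A^o$.

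The remaining work is bookkeeping. I would check that the index reversal in $D(\underline{\mathcal{E}})_i = D(\mathcal{E}_{-i})$ matches exactly the convention $C^o_{ij}=C_{-j,-i}$. I would also check that ``coherent'' and ``noetherian'' for right $A^o$-modules correspond to the same notions for left $A$-modules under \cite[Proposition 2.2]{MN}. This holds because that equivalence sends $e_iA^o$ to the projective left module $Ae_{-i}$, so finite generation and finite generation of kernels are preserved. The main point to watch is the sign and ordering conventions; there is no substantive obstacle once Theorem~\ref{theorem.polish} is available. If ${\sf C}$ is noetherian, the last clause of Theorem~\ref{theorem.polish}, applied to $\underline{\mathcal{F}}$, gives that $A^o$ is right noetherian, so $A$ is left noetherian.
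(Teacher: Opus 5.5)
Your proposal is correct and follows the paper's own argument: identify $\operatorname{End}(D(\underline{\mathcal{E}}))_{\geq 0}$ with $\operatorname{End}(\underline{\mathcal{E}})_{\geq 0}^{o}$ via the isomorphisms $\Hom(\mathcal{E}_i,\mathcal{E}_j)\to\Hom(D\mathcal{E}_j,D\mathcal{E}_i)$ induced by $D$, with multiplication compatible by contravariance. Then apply Theorem~\ref{theorem.polish} to the ample sequence $D(\underline{\mathcal{E}})$ and pass from right $A^o$-modules to left $A$-modules. Your additional checks, namely that $\Hom(\mathcal{F}_i,\mathcal{F}_i)=k$, that $i\le j$ is preserved, and that $e_iA^o = Ae_{-i}$, are correct details that the paper leaves implicit.
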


\begin{proof}
We claim that if $E := \operatorname{End}(\underline{\mathcal{E}})_{\geq 0}$, then $E^{o} \cong \operatorname{End}(D(\underline{\mathcal{E}}))_{\geq 0}$.  From the claim, it will follow that since $D(\underline{\mathcal{E}})$ is ample, $E^{o}$ is right coherent by Theorem \ref{theorem.polish}.  Thus, $E$ is left coherent.  Similarly, if ${\sf C}$ is noetherian and $D(\underline{\mathcal{E}})$ is ample, then by the second part of Theorem \ref{theorem.polish}, $E^{o}$ is right noetherian, so that $E$ is left noetherian as desired.

It remains to prove the claim.  To do so, we note that for each $i,j$ there is an isomorphism
$$
E_{ij}^{o} = E_{-j,-i} = (\cE_{i}, \cE_{j}) \overset{D}{\longrightarrow} (D(\cE_{j}), D(\cE_{i})) = (D(\underline{\mathcal{E}})_{-j}, D(\underline{\mathcal{E}})_{-i})=\operatorname{End}(D(\underline{\mathcal{E}}))_{ij}.
$$
In addition, the above assignment is compatible with multiplication by functoriality and contravariance of $D$.
\end{proof}

We will also need the following $\mathbb{Z}$-algebra version of \cite[Lemma~8.2]{atv}.  The statement uses the notion of normal family of elements, which is defined in \cite[Definition 8.6]{morph}.
\begin{proposition} \label{prop.hbt}
Let $C$ be a positively graded, connected locally finite $\mathbb{Z}$-algebra and $x = \{x_i \in e_i C\}_{i \in \mathbb{Z}}$ be a family of positive degree normal elements. If $C/(x)$ is right noetherian, then so is $C$. 
\end{proposition}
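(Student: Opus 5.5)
We will adapt the graded argument of \cite[Lemma~8.2]{atv}, which proves noetherianity by induction along the filtration by powers of the normal family. The key point is that for a normal family $x$, the two-sided ideal $(x)$ is controlled. Normality, in the sense of \cite[Definition 8.6]{morph}, gives for each $i$ that $x_i C = C x_{i'}$ as subsets of $C$ for the appropriate index shift, of degree $d>0$. It also gives an automorphism-like reindexing under which left multiples of $x$ equal right multiples.

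Fix $i$ and an increasing chain, or equivalently an arbitrary right submodule $N \subseteq e_iC$; we must show $N$ is finitely generated. We consider the filtration $N \supseteq N\cap (x) \supseteq N \cap (x)^2 \supseteq \cdots$, where $(x)^n$ is the ideal generated by products of $n$ consecutive $x$'s. Normality gives $(x)^n e_j = x^{(n)}_{\bullet} C$, a single principal piece in each row.

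The subquotients $(N\cap (x)^n + (x)^{n+1})/(x)^{n+1}$ embed, via left division by $x^{(n)}$, into submodules of $e_{i''}C/(x)$. Left division is injective when $x$ is regular. We will not assume regularity, so we instead follow the standard ATV trick. Let $N'$ be a submodule. We choose finitely many homogeneous elements of $N$ whose images generate $(N + (x))/(x) \subseteq e_i C/(x)$; this is possible since $C/(x)$ is right noetherian. Let $N_0$ be the submodule they generate. Then $N = N_0 + (N\cap x_{\bullet}C)$. Every element of $N \cap x_\bullet C$ has the form $x_{\bullet} c$. The set $N_1 := \{c : x_\bullet c \in N\}$ is a right submodule of $e_{i'}C$, in strictly higher starting degree after reindexing. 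Thus $N = N_0 + x N_1$, and the problem for $N$ is reduced to the problem for $N_1$.

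To make the induction terminate, we argue by contradiction using a chain formulation. Suppose $C$ is not right noetherian. By Zorn, or by choosing a maximal counterexample after passing through finitely many indices, we pick a non-finitely-generated $N \subseteq e_iC$ that is maximal among such submodules with the same data. Here the main obstacle appears. In the $\mathbb{Z}$-algebra setting the index $i'$ shifts, so we cannot directly compare $N_1$ with $N$. I would try a degree argument instead of maximality.

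Concretely, consider an ascending chain $N^{(1)}\subseteq N^{(2)}\subseteq\cdots$ in $e_iC$. The images in $e_iC/(x)$ stabilise. Moreover, the associated chains $\{c : x c \in N^{(m)}\}$ live in $e_{i'}C$, and their degree-$\le D$ parts are finite dimensional by local finiteness. For each $n$, we compare the "$n$-th layer" images in $e_{i^{(n)}}C/(x)$; each layer chain stabilises. Because $x$ has positive degree and $C$ is connected and locally finite, a homogeneous element of degree $D$ lies in only the layers $n\le D/d$. Then a diagonal argument shows that for each degree bound $D$ the chain stabilises in degrees $\le D$.

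To obtain uniform stabilization, we note that the stabilization index of layer $0$ controls everything. Once the images in $C/(x)$ agree, we have $N^{(m)} = N^{(m_0)} + x(\text{stuff})$. We then iterate only finitely often for generators of bounded degree, using finite generation of the layer-$0$ image by elements of bounded degree. I expect this bookkeeping, namely showing that torsion from non-regularity of $x$ does not break the iteration, to be the hardest step. The fallback is to first prove the result for $C/\operatorname{ann}(x)$ and then handle the $x$-torsion, which is killed by $x$ and is therefore a $C/(x)$-module, hence noetherian.
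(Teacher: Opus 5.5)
Your proposal has a genuine gap: it never supplies a terminating argument. The reduction $N = N_0 + x_iN_1$ is correct, but it makes no progress. Here $N_1=\{c : x_ic\in N\}$ is an arbitrary submodule of a \emph{different} row $e_{i'}C$. Iterating therefore produces infinitely many layers, living in infinitely many different modules $e_{i'}C/(x), e_{i''}C/(x),\dots$. Noetherianity of each row gives no bound on how many layers, or which degrees, are needed.

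Your claim that ``the stabilisation index of layer $0$ controls everything'' is unjustified. Stabilisation of the images in $e_iC/(x)$ only gives $N^{(m)}=N^{(m_0)}+(N^{(m)}\cap x_iC)$. The chain $N^{(m)}\cap x_iC=x_iN_1^{(m)}$ is a chain in the next row, about which you know no more than about the original one. For instance, a chain lying inside $x_iC$ has constant layer $0$ and you learn nothing. The degree remark only gives stabilisation in each fixed bidegree, which is automatic from local finiteness and is not uniform.

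The fallback is circular. A submodule of $e_iC$ killed by $x$ is a $C/(x)$-module, but to call it noetherian you need it finitely generated, which is what is being proved. Moreover, $x$-torsion is in general only killed by powers of $x$. Regularity is a red herring for finitely many layers: $e_ix^nC/e_ix^{n+1}C$ is a quotient of a row of $C/(x)$ via left multiplication by $x^n$, so $e_iC/e_ix^lC$ is noetherian with no regularity assumption.

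The missing idea is to compare layers by \emph{right} multiplication by powers of $x$, which stays in the row $e_iC$, instead of left division, which shifts the row. The paper takes the $x$-saturation $\overline{N}=\{c\in e_iC : cx^n\in N \text{ for } n\gg 0\}$, which is a submodule of $e_iC$ by normality. The argument then runs as follows.
\begin{enumerate}
\item Since $e_iC/x_iC$ is noetherian, choose $a_1,\dots,a_s$ generating $\overline{N}$ modulo $x_iC$, with $a_jx^{l_j}\in N$. Put $l=\max_j l_j$.
\item Since $e_iC/e_ix^lC$ is noetherian, choose $b_1,\dots,b_t\in N$ generating $N$ modulo $e_ix^lC$.
\item Claim: the $a_jx^{l_j}$ and $b_k$ generate $N$. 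Take $c\in N\cap e_ix^nC$. If $n<l$, subtract a combination of the $b_k$ to land in $N\cap e_ix^lC$.
\item If $n\ge l$, use normality to write $c=c''x^n$ with $c''\in\overline{N}$, and expand $c''=\sum_j a_jc_j+x c'''$.
\item By normality again, $a_jc_jx^n=(a_jx^{l_j})\tilde c_jx^{n-l_j}$ for some $\tilde c_j$. Hence $c$ minus a combination of generators lies in $N\cap e_ix^{n+1}C$.
\end{enumerate}
Your observation that a fixed bidegree meets $e_ix^nC$ only for bounded $n$ is exactly what terminates this induction. What your proposal lacks is the saturation, which collects all the layers into the single noetherian module $e_iC/x_iC$.
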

\begin{proof}
We present here the classic proof of Hilbert's basis theorem adapted to this setting. To simplify notation, we will use $x^n$ to denote an appropriate legal product of $n$ of the $x_i$'s. The choice of which $n$-fold product should be clear from the context. For example, if $c \in C_{ij}$, then the left-most $x$ in $cx^n$ lies in $e_j C$. 

We show that any submodule $I < e_i C$ is finitely generated. Consider the ``$x$-saturation'' 
$$\bar{I} = \{c \in e_i C| cx^N \in I, N \gg 0\}.$$
Normality of $x$ ensures that $\bar{I}$ is a submodule of $e_iC$. Now $e_iC/e_ixC$ is noetherian so we can find generators $a_1, \ldots, a_n$ for $\bar{I}$ modulo $e_i x C$. Pick $l_1,\ldots, l_n \in \mathbb{N}$ so that $a_1x^{l_1}, \ldots, a_nx^{l_n}\in I$ and let $l = \max \{l_i\}$. Let $b_1, \ldots, b_m\in I$ be generators for $I$ modulo $e_ix^l C$. 

We complete the proof by showing that $a'_1:=a_1x^{l_1}, \ldots, a'_n:=a_nx^{l_n},b_1, \ldots, b_m$ generate $I$.  Let $c \in I \cap e_i x^n C$. It suffices by induction to find $c' \in I \cap e_ix^{n+1}C$ such that $c-c'$ is a $C$-
linear combination of the $a'_i,b_j$ since the bidegree of $c$ and $c'$ are fixed. If $n <l$, then it suffices to use a linear combination of the $b_i$'s. If $n\geq l$, we write $c = c'' x^n$ where $c'' \in \bar{I}$ so we can write $c'' = \sum a_ic_i+ c'''x$. Using normality of $x$, we are done.
\end{proof}

\section{Helices} \label{section.elliptic}
The main focus of this paper is the study of helices.  In this section, we collect the relevant definitions of the term ``helix" we will need.  In particular, we define weak helices, and recall the notions of elliptic helices and planar helices, from \cite[Section~7]{bp} and \cite{markov}, respectively.  In the next section, we relate the latter two notions.  

\subsection{Weak helices}  We first define the notion of ''weak helix", which is a generalization of the other notions of helix we will study.  To this end, let $V$ be a finite dimensional $k$-space and $\mathcal{M} \in {\sf C}$. 
\begin{definition} \label{def:abstractTensorWithVectorSpace}
Since {\sf C} has finite direct sums, there exists an object $V \otimes_k \mathcal{M} \in {\sf C}$ such that 
$$\Hom_{\sf C}(V \otimes_k \mathcal{M}, -) = V^* \otimes_k \Hom_{\sf C}(\mathcal{M},-).$$
It is unique up to canonical isomorphism.
\end{definition}
This definition has some advantages over a similar definition given in \cite{az2} such as the following. Suppose $\mathcal{L} \in {\sf C}$ and let $V = \Hom_{\sf C}(\mathcal{M},\mathcal{L})$. There is a canonical element
$$1 \in \End_k V \stackrel{\text{can}}{\simeq} V^* \otimes_k \Hom_{\sf C}(\mathcal{M},\mathcal{L})$$
which by Definition~\ref{def:abstractTensorWithVectorSpace} gives the {\em evaluation map}
$$
\operatorname{Hom}(\mathcal{M}, \mathcal{L}) \otimes_{k} \mathcal{M} \rightarrow \mathcal{L}.
$$

\begin{definition}  \label{def:mutable}
An ordered pair $(\cE,\cF)$ of objects in ${\sf C}$ is said to be {\it left mutable} if $^j(\cE,\cF) = 0$ for $j \neq 0$ and furthermore the evaluation map 
$$
\epsilon \colon \Hom(\cE,\cF) \otimes \cE \to \cF
$$
is surjective. In this case we define the {\it left mutation} $L_\cE \cF := \ker \epsilon$. We also say {\it $\cF$ left mutates through $\cE$ in ${\sf C}$}. 

The right-handed versions are defined similarly.  Explicitily, the pair $(\cE,\cF)$ is {\it right mutable} if $^j(\cE,\cF) = 0$ for $j \neq 0$ and furthermore the coevaluation map
$$
\eta \colon \cE \to {}^{*}\Hom(\cE,\cF) \otimes \cF
$$
is injective.  In this case we define the {\it right mutation} $R_\cF \cE := \operatorname{cok }\eta$.
\end{definition}
For the rest of this section, we assume we have a sequence $\underline{\mathcal{E}} = (\mathcal{E}_{i})_{i \in \mathbb{Z}}$ of objects in ${\sf C}$ such that $\Delta_{j} := \operatorname{dim} \operatorname{Hom}(\mathcal{E}_{j-1}, \mathcal{E}_{j}) \neq 0$.  Suppose, for all $i \in \mathbb{Z}$:
\begin{enumerate} 
\item{} the pair $(\mathcal{E}_{i}, \mathcal{E}_{i+1})$ is right mutable, 

\item{} the pair $(R_{\mathcal{E}_{i+1}} \mathcal{E}_{i}, \mathcal{E}_{i+2})$ is right mutable and $R_{\mathcal{E}_{i+2}}R_{\mathcal{E}_{i+1}} \mathcal{E}_{i} \cong \mathcal{E}_{i+3}$, 

\item{} ${}^{*}\Hom(R_{\mathcal{E}_{i+1}}\mathcal{E}_{i}, \mathcal{E}_{i+2}) \cong \Hom(\mathcal{E}_{i+2}, \mathcal{E}_{i+3})$.

\end{enumerate}
\begin{definition}  \label{def:weakHelix}
Sequences $\underline{\mathcal{E}}$ satisfying (1) and (2) above will be called {\em weak helices of period three}, {\em three-periodic weak helices}, or just {\em weak helices}. We will also say that $\underline{\mathcal{E}}$ is {\em three-helical} in this case. 
\end{definition}
By property (1), there is an exact sequence
$$
0 \longrightarrow \mathcal{E}_{i} \longrightarrow {}^{*}\Hom(\mathcal{E}_{i}, \mathcal{E}_{i+1}) \otimes \mathcal{E}_{i+1} \longrightarrow R_{\mathcal{E}_{i+1}}\mathcal{E}_{i} \longrightarrow 0,
$$
and by (2), there is an exact sequence 
$$
0 \longrightarrow R_{\mathcal{E}_{i+1}}\mathcal{E}_{i} \longrightarrow {}^{*}\Hom(R_{\mathcal{E}_{i+1}}\mathcal{E}_{i}, \mathcal{E}_{i+2})\otimes \mathcal{E}_{i+2} \longrightarrow \mathcal{E}_{i+3} \longrightarrow 0.
$$
Splicing these sequences together and utilizing (3) yields an exact sequence
\begin{equation} \label{eq:helixExactSeq}
0 \longrightarrow \mathcal{E}_{i} \longrightarrow {}^{*}\Hom(\mathcal{E}_{i}, \mathcal{E}_{i+1}) \otimes \mathcal{E}_{i+1} \longrightarrow \Hom(\mathcal{E}_{i+2}, \mathcal{E}_{i+3}) \otimes \mathcal{E}_{i+2} \longrightarrow \mathcal{E}_{i+3} \longrightarrow 0.    
\end{equation}
Note that condition~(3) above essentially means that $R_{\mathcal{E}_{i+1}}\mathcal{E}_{i} \simeq L_{\mathcal{E}_{i+2}}\mathcal{E}_{i+3}$ so there is a symmetry between left and right mutations. 


\subsection{Elliptic helices} \label{sub.elliptic}
We now define elliptic helices.  We will study this concept in more depth in Sections \ref{sec.numbericalSetup}, \ref{sec:hilbertSeries}, \ref{sec:seedsMarkovCase} and \ref{sec:ellipticHelixAmple}.

\begin{definition}  
An object $\cE \in {\sf C}$ is {\it elliptically exceptional} if i) $^j(\cE,\cE) \cong k$ for $j = 0,1$ and is zero otherwise and ii) for any $\cF \in {\sf C}$, the natural pairing 
$$
{}^{0}(\cE,\cF) \otimes \,^1(\cF,\cE) \to \,^1(\cE,\cE) = k
$$ 
is non-degenerate. 
\end{definition}
It follows (see \cite[Section 6]{morph}) that a coherent sheaf $\cE$ over $X$ is elliptically exceptional in ${\sf Coh }X$ if and only if  $\cE$ is either a vector bundle with relatively prime degree and rank or $\cE$ is a skyscraper sheaf.  

Our abelian category version of Bondal-Polishchuk's three-periodic elliptic helices is the same one used in \cite{morph}:
\begin{definition}  \label{def:ellipticHelix}
(\cite[Section~7, p. 250]{bp}) Let $\underline{\cE} = (\cE_i)_{i \in \mathbb{Z}}$ be a sequence of elliptically exceptional objects in {\sf C}. We say $\underline{\cE}$ is a {\it three-periodic elliptic helix} or just an {\it elliptic helix} if $\underline{\cE}$ is a three-periodic weak helix such that for all $i<j$ we have $^l(\cE_i,\cE_j) = 0$ for $l \neq 0$. 
\end{definition}

\begin{remark} \label{rem:ellipticHelix}
\begin{enumerate}
    \item By the remark following \cite[Question 7.1]{morph}, if $\underline{\cE}$ is an elliptic helix, then $\mathcal{E}_{i}$ is a simple and hence stable bundle for all $i \in \mathbb{Z}$. Furthermore, the slopes of these bundles is increasing.  
    \item The analogous definition in terms of left mutations recovers the same concept.
    \item Property (3) preceding the definition of weak helix automatically holds for an elliptic helix \cite[Lemma 3]{rudakov}.
    \item Any weak helix of stable bundles on an elliptic curve $X$ is automatically an elliptic helix since the stable bundles must have strictly increasing slope so $^1(\cE_i, \cE_j) = 0$ for $i<j$. 
\end{enumerate}
\end{remark}

The significance of this definition is the following result from \cite{bp}, adapted to our context (\cite[Theorem 5.4]{morph}):

\begin{theorem}[\cite{bp}]  \label{thm:BPBisKoszul}
Let $\underline{\cE}$ be an three-periodic elliptic helix. Let $\mathbb{S}^{nc}(\underline{\cE})$ be the quadratic part of the endomorphism algebra $\End(\underline{\cE})$. Then the following hold:
\begin{enumerate}
    \item{} The algebra $\mathbb{S}^{nc}(\underline{\cE})$ is $3$-periodic, Koszul, has global dimension three, and is AS-regular of dimension three and Gorenstein parameter three (in the sense of \cite[Definition 7.1]{morinyman}).
    \item{} The canonical map $\mathbb{S}^{nc}(\underline{\cE}) \longrightarrow \End(\underline{\cE})$ is surjective so in particular, $\End(\underline{\cE})$ is generated in degree one. 
\end{enumerate}
\end{theorem}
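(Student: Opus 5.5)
The statement is essentially \cite{bp} transported to the abelian setting. I will prove (2) directly from the helix axioms, and then get (1) by identifying the quadratic dual of $\mathbb{S}^{nc}(\underline{\cE})$ with a finite, length-three Frobenius $\mathbb{Z}$-algebra built from the helix. Where the abelian-category bookkeeping matches the derived-category proof of \cite{bp}, I will follow \cite[Theorem 5.4]{morph}.

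\emph{Step 1 (generation in degree one).} I argue by induction on $j-i\geq 2$ that composition
$$\Hom(\cE_i,\cE_{i+1})\otimes \Hom(\cE_{i+1},\cE_j)\to \Hom(\cE_i,\cE_j)$$
is surjective. Let $W={}^*\Hom(\cE_i,\cE_{i+1})$ and $R=R_{\cE_{i+1}}\cE_i$. Applying $\Hom(-,\cE_j)$ to
$$0\to \cE_i\to W\otimes\cE_{i+1}\to R\to 0$$
gives the exact sequence
$$\Hom(W\otimes \cE_{i+1},\cE_j)\to \Hom(\cE_i,\cE_j)\to {}^1(R,\cE_j)\to {}^1(W\otimes\cE_{i+1},\cE_j).$$
The first map is exactly the composition map above, so it suffices to show ${}^1(R,\cE_j)=0$.

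To see this, use the second mutation sequence
$$0\to R\to V\otimes \cE_{i+2}\to \cE_{i+3}\to 0,\qquad V={}^*\Hom(R,\cE_{i+2}).$$
This places ${}^1(R,\cE_j)$ between ${}^1(\cE_{i+2},\cE_j)^{\oplus}$ and ${}^2(\cE_{i+3},\cE_j)$.
\begin{itemize}
\item For $j\geq i+3$, both vanish by the elliptic helix condition and elliptic exceptionality (${}^2(\cE_{i+3},\cE_{i+3})=0$).
\item For $j=i+2$, the outer terms are not both zero, so I instead use the right mutability of $(R,\cE_{i+2})$, which forces ${}^1(R,\cE_{i+2})=0$ directly.
\end{itemize}
Iterating the surjectivity shows that $\End\underline{\cE}$ is generated by the $B_{i,i+1}$, and hence $\mathbb{S}^{nc}\to\End\underline{\cE}$ is onto. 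The same computation shows that the relations in degree two are $\Hom(R,\cE_{i+2})^*$, of the expected dimension. Three-periodicity of the Hilbert data then follows from $R_{\cE_{i+2}}R_{\cE_{i+1}}\cE_i\cong\cE_{i+3}$.

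\emph{Step 2 (Koszul, AS-regular, Gorenstein parameter three).} Applying $\bigoplus_n\Hom(\cE_n,-)$ to the four-term sequence (\ref{eq:helixExactSeq}) should yield, via the vanishing of ${}^1$ in Step 1, a complex of projective $\mathbb{S}$-modules of the form
$$0\to P_{i+3}\to P_{i+2}^{\oplus}\to P_{i+1}^{\oplus}\to P_i\to k_i\to 0.$$
Its maps are linear, with the middle map encoding the quadratic relations and the last encoding the cubic ``syzygy''.

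Exactness at each spot is again a Hom/Ext vanishing statement for the pairs $(\cE_a,\cE_b)$ with $a<b$. I would verify it using the truncated mutation sequences exactly as in Step 1. Such a linear resolution of every simple module of length three gives Koszulity and global dimension three.

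For the AS-Gorenstein condition, I compute $\Ext^\bullet(k_i,\mathbb{S})$ by dualizing this resolution. The result should be $k_{i+3}$ concentrated in degree three. This uses the nondegenerate pairing ${}^0(\cE,\cF)\otimes{}^1(\cF,\cE)\to k$ from elliptic exceptionality, which identifies the dual of each term with the corresponding term of the resolution read in reverse, i.e.\ it plays the role of Serre duality. This gives Gorenstein parameter three.

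\emph{Expected obstacle.} The main difficulty is proving exactness of the linear complex in Step 2 in the middle degrees, which is where Koszulity really lives. My first attempt would be to prove it by induction on the length of the truncation, with all the needed vanishing coming from the helix conditions. If that becomes unwieldy, I would fall back on the Bondal--Polishchuk criterion that the quadratic dual is the finite Ext-algebra of the dual three-term exceptional collection, and reduce to \cite[Theorem 5.4]{morph}.
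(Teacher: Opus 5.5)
Your Step 1 is sound. The two mutation sequences give ${}^1(R_{\cE_{i+1}}\cE_i,\cE_j)=0$ for every $j\geq i+2$; when $j=i+2$ this comes from right mutability of $(R,\cE_{i+2})$. Hence composition $\Hom(\cE_i,\cE_{i+1})\otimes\Hom(\cE_{i+1},\cE_j)\to\Hom(\cE_i,\cE_j)$ is onto, and part (2) follows. This is more self-contained than the paper, which simply quotes the whole theorem from \cite{bp} via \cite[Theorem 5.4]{morph}. There is one small slip: the degree-two relation space is the kernel $\Hom(R,\cE_{i+2})$ itself, which condition (3) identifies with $\Hom(\cE_{i+2},\cE_{i+3})^*$.

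The genuine gap is Step 2, and it is structural, not a missing vanishing. Applying $\bigoplus_n\Hom(-,\cE_n)$ or $\bigoplus_n\Hom(\cE_n,-)$ to (\ref{eq:helixExactSeq}) can only produce modules over $B=\End\underline{\cE}$. The terms are projective $B$-modules, not projective $\mathbb{S}$-modules, and $B$ and $\mathbb{S}$ already differ by one dimension in each degree-three component (Theorem~\ref{thm:hilbertSeries}). Worse, the complex you get is not exact. In the component $\Hom(-,\cE_{i+3})$, the sequence $0\to R\to V\otimes\cE_{i+2}\to\cE_{i+3}\to0$ yields
\[
V^*\otimes\Hom(\cE_{i+2},\cE_{i+3})\to\Hom(R,\cE_{i+3})\to{}^1(\cE_{i+3},\cE_{i+3})=k\to V^*\otimes{}^1(\cE_{i+2},\cE_{i+3})=0 .
\]
Since $\Hom(R,\cE_{i+3})$ embeds in $\Hom(\cE_i,\cE_{i+1})\otimes\Hom(\cE_{i+1},\cE_{i+3})$, the complex has one-dimensional homology at the $P_{i+1}^{\oplus}$ term in the component $n=i+3$; it is exact at every other spot. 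This is the $\mathbb{Z}$-algebra analogue of $H_1(x,y,z;k[x,y,z]/(f))\cong k(-3)$ for a plane cubic $f$. It reflects precisely the degree-three normal element generating $\ker(\mathbb{S}\to B)$, and it comes from the self-extension ${}^1(\cE_a,\cE_a)=k$ of an elliptically exceptional object.

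So your claim that exactness reduces to Hom/Ext vanishing for pairs $a<b$ is false. Your Gorenstein computation, which dualizes this resolution, then has nothing to stand on. Because ${\sf C}$ only sees $B$, Koszulity of $\mathbb{S}$ in degrees $\geq 3$ needs an argument that works with the quadratic algebra itself; this is what \cite{bp}, as adapted in \cite[Theorem 5.4]{morph}, supplies. What you call a fallback is therefore the entire content of part (1), and it is exactly what the paper relies on.
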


\subsection{Planar helices}
We end this section by giving the definition of planar helix from \cite[Definition~p.80]{bondal}, which we will study in detail in Section \ref{section.helicesP2}.  First, we need to recall that a coherent sheaf $\mathcal{E}$ on a variety over $k$ is called {\it exceptional} if $\operatorname{dim }{}^{0}(\mathcal{E}, \mathcal{E})=1$ and $\operatorname{dim }{}^{i}(\mathcal{E}, \mathcal{E})=0$ for $i>0$. 

\begin{definition} A sequence $\underline{\mathcal{E}}$ of exceptional bundles on $\mathbb{P}^2$ is a {\it planar helix} if it is a three-periodic weak helix which further satisfies i) the periodicity relation $\mathcal{E}_{i-3} \simeq \mathcal{E}_i \otimes \omega_{\mathbb{P}^2}$ and ii) any three consecutive terms $\mathcal{E}_i, \mathcal{E}_{i+1}, \mathcal{E}_{i+2}$ is exceptional in the sense that $\mathbf{R}\Hom(\mathcal{E}_j, \mathcal{E}_l) = 0$ whenever $i\leq l < j < i+2$. 
\end{definition}

\section{Planar helices and their associated $\mathbb{Z}$-algebras} \label{section.helicesP2}
In this section, we show that planar helices are ample, have noetherian endomorphism rings, and restrict to elliptic helices as defined in Section~\ref{section.elliptic}. 

\begin{lemma}  \label{lem:amplenessForBundles}
Let $\underline{\mathcal{V}} = (\mathcal{V}_i)$ be a sequence of non-zero vector bundles on a projective scheme $X$ such that for any $\mathcal{M} \in {\sf Coh}(X)$, we have $\Ext^1_X(\mathcal{V}_i, \mathcal{M}) = 0$ for $i \ll 0$. Then $\underline{\mathcal{V}}$ is ample in ${\sf Coh}(X)$. 
\end{lemma}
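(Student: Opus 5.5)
We have to verify the two conditions in the definition of ampleness: projectivity, and the existence, for each $\mathcal{M}$ and $m$, of a surjection onto $\mathcal{M}$ from a finite sum of the $\mathcal{V}_{-i}$ with $i \ge m$. Note that the hypothesis concerns $i \ll 0$, which matches the ampleness conditions: both involve $\mathcal{E}_{-i}$ with $i$ large.

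\emph{Projectivity.} Let $f\colon \mathcal{M} \to \mathcal{N}$ be an epimorphism in ${\sf Coh}(X)$ with kernel $\mathcal{K}$. Applying $\Hom_X(\mathcal{V}_{-i},-)$ to $0 \to \mathcal{K} \to \mathcal{M} \to \mathcal{N} \to 0$ gives the exact sequence
$$\Hom(\mathcal{V}_{-i},\mathcal{M}) \to \Hom(\mathcal{V}_{-i},\mathcal{N}) \to \Ext^1(\mathcal{V}_{-i},\mathcal{K}).$$
By hypothesis applied to $\mathcal{K}$, the last term vanishes for $i \gg 0$. Hence $\Hom(\mathcal{V}_{-i},f)$ is surjective for $i \gg 0$, which is exactly projectivity.

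\emph{Generation.} I would first reduce to a single index. Since $\mathcal{M}$ is coherent and $X$ is noetherian, the sum of the images of all maps $\mathcal{V}_{-i} \to \mathcal{M}$ with $i \ge m$ is a coherent subsheaf. By the ascending chain condition it is attained by finitely many such maps, so it suffices to show that this image $\mathcal{I}$ equals $\mathcal{M}$.

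Suppose $\mathcal{I} \neq \mathcal{M}$ and set $\mathcal{Q} = \mathcal{M}/\mathcal{I} \neq 0$. By projectivity (just proved), every map $\mathcal{V}_{-i} \to \mathcal{Q}$ lifts to a map $\mathcal{V}_{-i} \to \mathcal{M}$ for $i \gg 0$. Any such lift has image in $\mathcal{I}$, so $\Hom(\mathcal{V}_{-i},\mathcal{Q}) = 0$ for all $i \gg 0$. The problem therefore reduces to the following claim: if $\mathcal{Q} \neq 0$ is coherent, then $\Hom(\mathcal{V}_{-i},\mathcal{Q}) \neq 0$ for infinitely many large $i$.

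\emph{The main obstacle} is this claim, because it must be extracted from the $\Ext^1$-vanishing hypothesis alone. I would proceed as follows.
\begin{enumerate}
\item Choose a closed point $x$ in the support of $\mathcal{Q}$. Since $\mathcal{Q}$ has a nonzero map onto a skyscraper quotient $k(x)$ (via $\mathcal{Q} \to \mathcal{Q}\otimes k(x) \neq 0$ by Nakayama), and $\Hom(\mathcal{V}_{-i},-)$ is right exact onto $k(x)$ for $i \gg 0$ by projectivity, it suffices to show $\Hom(\mathcal{V}_{-i},k(x)) \neq 0$.
\item This holds for every $i$, because $\mathcal{V}_{-i}$ is a nonzero vector bundle, so $\mathcal{V}_{-i}\otimes k(x) \neq 0$ and it surjects onto $k(x)$.
\item This gives a nonzero map $\mathcal{V}_{-i} \to k(x)$. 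Projectivity applied to the epimorphism $\mathcal{Q} \to k(x)$ lifts it to a map $\mathcal{V}_{-i} \to \mathcal{Q}$, which is nonzero since its composite with $\mathcal{Q} \to k(x)$ is nonzero. This contradicts $\Hom(\mathcal{V}_{-i},\mathcal{Q}) = 0$.
\end{enumerate}
Hence $\mathcal{I} = \mathcal{M}$, and $\underline{\mathcal{V}}$ is ample.
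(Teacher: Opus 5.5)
Your proof is correct. The projectivity step is the same as the paper's, but you reach generation by a different route.

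The paper argues locally. For a closed point $x$ it lifts a surjection $\mathcal{V}_i^{r} \to \mathcal{M}\otimes k(x)$ to a map $\phi\colon \mathcal{V}_i^{r}\to\mathcal{M}$, using $\Ext^1(\mathcal{V}_i,\ker(\mathcal{M}\to\mathcal{M}\otimes k(x)))=0$. Nakayama then makes $\phi$ surjective on a neighbourhood of $x$, and the local surjections are patched by a briefly stated d\'evissage/quasi-compactness step.

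You argue globally. The ascending chain condition lets you replace all maps from the $\mathcal{V}_{-i}$, $i\ge m$, by finitely many. What remains is to rule out a nonzero cokernel $\mathcal{Q}$ that receives no nonzero maps from $\mathcal{V}_{-i}$ for $i\gg 0$. You do this by lifting a nonzero map onto a skyscraper quotient $k(x)$ of $\mathcal{Q}$.

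The shared ingredient is lifting maps to $k(x)$ via $\Ext^1$-vanishing. Your version swaps the ``surjective near $x$ plus patching'' step for noetherianity, so it needs no d\'evissage. It also works verbatim in any noetherian abelian category, for a projective sequence such that every nonzero object has a quotient receiving nonzero maps from $\mathcal{V}_{-i}$ for all large $i$. The paper's version is more hands-on and geometric, producing an explicit surjection near each point.

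One caveat applies equally to both arguments. Your step (2), like the paper's choice of a surjection $\mathcal{V}_i^r\to\mathcal{M}\otimes k(x)$, needs $\mathcal{V}_{-i}\otimes k(x)\neq 0$ at the particular point $x$. That means $\mathcal{V}_{-i}$ must have positive rank at $x$, which ``non-zero vector bundle'' guarantees only when $X$ is connected. This holds in the paper's applications.
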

\begin{proof}
Projectivity follows from our hypotheses and the long exact sequence in Ext. We show now there exists a surjection of the form $\oplus_j \mathcal{V}_{i_j} \to \mathcal{M}$ where the $i_j$ can be made as negative as you like. We pick a closed point $x \in X$ and let $\mathcal{K} := \ker (\mathcal{M} \to \mathcal{M} \otimes k(x))$. Pick $i \ll 0$ such that $\Ext^1_X(\mathcal{V}_i,\mathcal{K}) = 0$ so by projectivity, we may lift any surjection $\bar{\phi} \colon \mathcal{V}_i^r \to \mathcal{M} \otimes k(x)$ to a morphism $\phi \colon \mathcal{V}_i^r \to \mathcal{M}$. Now $\phi$ is surjective in a neighbourhood of $x$ by Nakayama's lemma so we are done by d\'evissage. 
\end{proof}

\begin{lemma}  \label{lem:twistBundleAmple}
Let $\mathcal{V}$ be a vector bundle on projective scheme $X$ and $\mathcal{L}$ an ample line bundle. Then the pair $(\mathcal{V}, (-) \otimes_X \mathcal{L})$ is ample in the sense of \cite[(4.2.1)]{az}, that is, $(\mathcal{V}\otimes_X \mathcal{L}^{\otimes i})$ is ample.
\end{lemma}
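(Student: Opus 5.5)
The plan is to reduce everything to Lemma~\ref{lem:amplenessForBundles}. Write $\mathcal{V}_i := \mathcal{V} \otimes_X \mathcal{L}^{\otimes i}$. Each $\mathcal{V}_i$ is a non-zero vector bundle, provided $\mathcal{V} \neq 0$. We must be careful with the indexing convention of the ampleness definition in Section~2: there the relevant terms are $\mathcal{E}_{-i}$ with $i \gg 0$. The sequence in question is therefore indexed so that its terms with very negative index are $\mathcal{V}\otimes\mathcal{L}^{\otimes i}$ for $i \ll 0$, i.e.\ highly negative twists. This matches \cite[(4.2.1)]{az}, where ampleness concerns $\mathcal{V}\otimes\mathcal{L}^{-n}$ for $n \gg 0$. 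So it suffices to verify the hypothesis of Lemma~\ref{lem:amplenessForBundles}: for every $\mathcal{M} \in {\sf Coh}(X)$ we have $\Ext^1_X(\mathcal{V}\otimes \mathcal{L}^{\otimes i}, \mathcal{M}) = 0$ for $i \ll 0$.

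To check this, use that $\mathcal{V}$ is locally free. Then $\Ext^j_X(\mathcal{V}\otimes\mathcal{L}^{\otimes i}, \mathcal{M}) \cong H^j(X, \mathcal{V}^{\vee}\otimes \mathcal{M} \otimes \mathcal{L}^{\otimes -i})$, since $\mathcal{H}om(\mathcal{V}\otimes\mathcal{L}^{i},-)$ is exact and the local-to-global spectral sequence degenerates. The sheaf $\mathcal{F} := \mathcal{V}^\vee \otimes \mathcal{M}$ is coherent. Serre vanishing for the ample line bundle $\mathcal{L}$ on the projective scheme $X$ gives $H^1(X, \mathcal{F}\otimes \mathcal{L}^{\otimes n}) = 0$ for $n \gg 0$, that is, for $i = -n \ll 0$. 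Lemma~\ref{lem:amplenessForBundles} then yields ampleness of $(\mathcal{V}_i)$ in ${\sf Coh}(X)$. Reading the statement through the equivalence with the definition in \cite[(4.2.1)]{az}, this is exactly ampleness of the pair $(\mathcal{V}, (-)\otimes_X\mathcal{L})$.

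If one prefers to check \cite[(4.2.1)]{az} directly, the same vanishing gives its condition (b), surjectivity of $\Hom(\mathcal{V}\otimes\mathcal{L}^{-n}, -)$ on epimorphisms for $n \gg 0$. Its condition (a), a surjection from a sum of $\mathcal{V}\otimes\mathcal{L}^{-n_j}$ with large $n_j$ onto any $\mathcal{M}$, comes from Lemma~\ref{lem:amplenessForBundles}, or directly from global generation of $\mathcal{V}^\vee\otimes\mathcal{M}\otimes\mathcal{L}^{n}$ for $n\gg0$ composed with the evaluation $\mathcal{V}\otimes\mathcal{V}^\vee\otimes\mathcal{M}\to\mathcal{M}$, which is surjective because $\mathcal{V}\neq 0$ is locally free.

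The only real obstacle is bookkeeping. We must match the sign and indexing conventions between the paper's notion of ample sequence (which uses $\mathcal{E}_{-i}$, $i\gg 0$) and that of \cite{az}, so that Serre vanishing is applied to positive powers of $\mathcal{L}$. The cohomological input itself is standard.
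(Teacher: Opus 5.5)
Your proposal is correct and is essentially the paper's argument: the paper also reduces to Lemma~\ref{lem:amplenessForBundles}. It obtains the required vanishing from $\Ext^1_X(\mathcal{V}\otimes\mathcal{L}^{\otimes -i},\mathcal{M}) = H^1(X,\mathcal{V}^*\otimes\mathcal{M}\otimes\mathcal{L}^{\otimes i}) = 0$ for $i\gg 0$ by Serre vanishing, exactly as you do, including the sign bookkeeping.
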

\begin{proof}
By Lemma~\ref{lem:amplenessForBundles}, we need only prove projectivity and so consider an exact sequence of coherent sheaves 
$$0 \to \mathcal{M}' \to \mathcal{M} \to \mathcal{M}'' \to 0.$$
Projectivity follows from 
$$\Ext^1_X(\mathcal{V} \otimes \mathcal{L}^{\otimes -i}, \mathcal{M}') = H^1(X, \mathcal{V}^* \otimes \mathcal{M}' \otimes \mathcal{L}^{\otimes i}) = 0$$
for $i \gg 0$. 
\end{proof}



\begin{corollary} \label{cor:P2noetherian}
Let $\underline{\mathcal{E}}$ be a planar helix.  Then
\begin{enumerate}
\item{} The $\mathbb{Z}$-algebra $\operatorname{End}({\underline{\mathcal{E}}})$ is noetherian,

\item{} The canonical map $\mathbb{S}^{nc}(\underline{\mathcal{E}}) \longrightarrow \operatorname{End}({\underline{\mathcal{E}}})$ is an isomorphism,

\item{} There are equivalences of categories 
$$
{\sf proj }\mathbb{S}^{nc}(\underline{\mathcal{E}}) \equiv {\sf Coh }\mathbb{P}^{2}
$$ 
and 
$$
{\sf Proj }\mathbb{S}^{nc}(\underline{\mathcal{E}}) \equiv {\sf Qcoh }\mathbb{P}^{2}.
$$
\end{enumerate}
\end{corollary}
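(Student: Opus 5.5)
The plan is to show first that a planar helix is ample, and then to get the three conclusions from Theorem \ref{theorem.polish}, Proposition \ref{prop.leftright} and the Koszul/regularity package for helices on $\mathbb{P}^2$.

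\textbf{Ampleness.} By the periodicity relation $\mathcal{E}_{i-3}\simeq \mathcal{E}_i\otimes\omega_{\mathbb{P}^2}$ we have $\mathcal{E}_{3m+r}\simeq \mathcal{E}_r\otimes \cO(3m)$ for $r=0,1,2$. So $\underline{\mathcal{E}}$ is the union of the three sequences $(\mathcal{E}_r\otimes\cO(3)^{\otimes m})_m$. Each of these is ample by Lemma \ref{lem:twistBundleAmple}, applied with $\mathcal{L}=\cO(3)$. The Ext-vanishing hypothesis of Lemma \ref{lem:amplenessForBundles} then holds for the merged sequence, because
$$\Ext^1(\mathcal{E}_i,\mathcal{M})=H^1(\mathcal{E}_i^*\otimes\mathcal{M})=0 \quad\text{for } i\ll 0$$
by Serre vanishing applied to each of the three residues. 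Hence $\underline{\mathcal{E}}$ is ample in ${\sf Coh}\,\mathbb{P}^2$.

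\textbf{Noetherianity.} Take the duality $D=\mathcal{H}om(-,\cO)$ on vector bundles. Since $D$ is not a duality on all of ${\sf Coh}\,\mathbb{P}^2$, I would instead use $D=\mathcal{E}xt^{\bullet}$ shifted, or more simply apply $\Hom(\mathcal{E}_i,\mathcal{E}_j)\cong\Hom(\mathcal{E}_j^*,\mathcal{E}_i^*)$ directly. This identifies $\End(\underline{\mathcal{E}})^o$ with $\End(\underline{\mathcal{E}}^*)$, where $\underline{\mathcal{E}}^*_i=\mathcal{E}_{-i}^*$. The dual sequence is again of the form $\mathcal{E}_r^*\otimes\cO(3m)$, so it is ample by the same argument. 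The argument of Proposition \ref{prop.leftright} then goes through verbatim, and Theorem \ref{theorem.polish} gives that $\End(\underline{\mathcal{E}})_{\geq 0}$ is left and right noetherian, using that ${\sf Coh}\,\mathbb{P}^2$ is noetherian. Finally $\End(\underline{\mathcal{E}})$ is positively graded: $\Hom(\mathcal{E}_j,\mathcal{E}_i)=0$ for $j>i$, which follows from the exceptional-triple condition and helix periodicity (Hom to a lower twist vanishes by stability/slope reasons, or via Serre duality and $\mathbf{R}\Hom(\mathcal{E}_{i+3},\mathcal{E}_i)=\mathbf{R}\Hom(\mathcal{E}_i,\mathcal{E}_i)^{*}[-2]$, which is concentrated in degree $2$). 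So $\End(\underline{\mathcal{E}})=\End(\underline{\mathcal{E}})_{\geq0}$, which proves (1).

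\textbf{Isomorphism $\mathbb{S}^{nc}\to\End$.} This is the main obstacle. I would argue via Hilbert series and Koszulity. From the helix exact sequence (\ref{eq:helixExactSeq}), apply $\Hom(\mathcal{E}_j,-)$ for $j\le i$. All higher Exts vanish among members of a helix in the forward direction (Bondal's theorem that planar helices are strong), so we get exact sequences
$$0\to B_{\cdot,i}\to B_{\cdot,i+1}^{\Delta}\to B_{\cdot,i+2}^{\Delta'}\to B_{\cdot,i+3}\to0.$$
These are exactly the Koszul resolution sequences of an AS-regular quadratic $\mathbb{Z}$-algebra of dimension three, of the kind in Theorem \ref{thm:BPBisKoszul}(1), whose proof from \cite{bp} uses only the weak helix sequence and strongness and not ellipticity. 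Consequently $\End(\underline{\mathcal{E}})$ has the same Hilbert series as $\mathbb{S}^{nc}(\underline{\mathcal{E}})$, i.e. with no cubic relation, in contrast to the elliptic case. Generation in degree one follows from the surjectivity of the evaluation maps in the mutations. So the surjection $\mathbb{S}^{nc}\to\End$ is a dimension-preserving surjection, hence an isomorphism. This gives (2).

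\textbf{Equivalences.} By (2) and ampleness, Theorem \ref{theorem.polish} gives ${\sf Coh}\,\mathbb{P}^2\equiv{\sf proj}\,\End(\underline{\mathcal{E}})={\sf proj}\,\mathbb{S}^{nc}(\underline{\mathcal{E}})$. Since $\mathbb{S}^{nc}(\underline{\mathcal{E}})$ is noetherian, ${\sf Proj}$ is the ind-completion of ${\sf proj}$, and ${\sf Qcoh}\,\mathbb{P}^2$ is the ind-completion of ${\sf Coh}\,\mathbb{P}^2$. Taking ind-completions of both sides yields ${\sf Proj}\,\mathbb{S}^{nc}(\underline{\mathcal{E}})\equiv{\sf Qcoh}\,\mathbb{P}^2$, which is (3).
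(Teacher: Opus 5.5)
Your proposal is correct, and for (1) and (3) it follows the paper's route. Ampleness of $\underline{\mathcal{E}}$ and of its dual comes from Lemmas~\ref{lem:amplenessForBundles} and~\ref{lem:twistBundleAmple}. Noetherianity comes from the argument of Proposition~\ref{prop.leftright} with Theorem~\ref{theorem.polish}. The ${\sf Proj}$ equivalence comes from recovering a locally noetherian category from its noetherian objects, for which the paper cites \cite[Theorem 8.9]{pop}.

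In two places you are more careful than the paper:
\begin{enumerate}
\item The paper appeals to Proposition~\ref{prop.leftright}, whose hypothesis of a duality on all of ${\sf C}$ is not literally available on ${\sf Coh}\,\mathbb{P}^2$. An anti-autoequivalence would exchange noetherian and artinian objects, and $\cO$ is not artinian. You correctly observe that the proof only needs $\Hom(\cE_i,\cE_j)\cong\Hom(\cE_j^*,\cE_i^*)$ compatibly with composition.
\item You check that $\End(\underline{\mathcal{E}})=\End(\underline{\mathcal{E}})_{\geq 0}$, which the paper leaves implicit. Your Serre duality argument covers all $j>i$ once you write $\Hom(\cE_j,\cE_i)\cong\Ext^2(\cE_i,\cE_{j-3})^*$ and use strongness or the exceptional-triple vanishing.
\end{enumerate}

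The real difference is (2): the paper simply cites \cite[Theorem 4.2]{bp}, while you try to reprove it. As written, your argument is a detour. Comparing Hilbert series needs the Hilbert series of $\mathbb{S}^{nc}(\underline{\mathcal{E}})$. For that you assert that the proof of Theorem~\ref{thm:BPBisKoszul} transfers to the planar case, which is again an appeal to Bondal--Polishchuk, only less direct.

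You do not need $\mathbb{S}^{nc}$ at all. Put $B=\End(\underline{\mathcal{E}})$ and $n=-i-3$. Combine your sequences for $j\le i$ with the boundary cases: $j=i+1,i+2$ give exact sequences by the exceptional-triple vanishing, and $j=i+3$ gives cokernel $k$. Together they say that $0\to e_{n+3}B\to (e_{n+2}B)^{\oplus a}\to (e_{n+1}B)^{\oplus b}\to e_nB\to S_n\to 0$ is a linear resolution of the simple right module $S_n$. Here $(e_{n+1}B)^{\oplus b}=B_{n,n+1}\otimes e_{n+1}B$ and the last map is multiplication.
\begin{itemize}
\item Exactness at $e_nB$ is generation in degree one.
\item Exactness at $B_{n,n+1}\otimes e_{n+1}B$, with the preceding term generated in degree $n+2$, says the kernel of multiplication is generated by the quadratic relations.
\end{itemize}
So $B$ is quadratic (indeed Koszul), and $\mathbb{S}^{nc}(\underline{\mathcal{E}})\to B$ is an isomorphism. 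This is essentially Bondal--Polishchuk's argument. It makes (2) self-contained modulo strongness of planar helices, which your derivation already uses, whereas the paper's citation is shorter but opaque.
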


\begin{proof}
To prove (1), we use Lemma~\ref{lem:twistBundleAmple} to see $\underline{\mathcal{E}}$ and its dual are ample so (1) now follows from Proposition~\ref{prop.leftright}.

Part (2) follows from the fact that $\operatorname{End}({\underline{\mathcal{E}}})$ is quadratic by \cite[Theorem 4.2]{bp}.  The first part of (3) now follows from (2) and the fact that $\underline{\mathcal{E}}$ is ample for ${\sf Coh }\mathbb{P}^{2}$.  To prove the second part of (3), we note that by (1) and \cite[Theorem 8.9]{pop}, the category ${\sf Proj }\mathbb{S}^{nc}(\underline{\mathcal{E}})$ can be recovered up to equivalence by its full subcategory of noetherian objects, ${\sf proj }\mathbb{S}^{nc}(\underline{\mathcal{E}})$, and similarly ${\sf Qcoh }\mathbb{P}^{2}$ can be recovered up to equivalence by ${\sf Coh }\mathbb{P}^{2}$.  
\end{proof}

We now show that planar helices restrict to elliptic helices.

\begin{proposition} \label{prop:restrictPlanarHelices}
Let $\underline{\mathcal{E}}$ be a planar helix and $X \subset \mathbb{P}^2$ be a smooth cubic. Then $\underline{\mathcal{E}}|_X := (\mathcal{E}_i|_X)$ is an elliptic helix on $X$. 
\end{proposition}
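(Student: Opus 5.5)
Our plan is to restrict the exact sequence \eqref{eq:helixExactSeq} from $\mathbb{P}^2$ to $X$ and verify, using the standard sequence $0 \to \mathcal{O}_{\mathbb{P}^2}(-3) \to \mathcal{O}_{\mathbb{P}^2} \to \mathcal{O}_X \to 0$, that all the Hom and Ext groups between restricted bundles are what the definition of an elliptic helix requires. Throughout we use $\mathcal{E}_{i-3} \simeq \mathcal{E}_i(-3)$, which is the periodicity relation with $\omega_{\mathbb{P}^2}=\mathcal{O}(-3)$.

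\textbf{Step 1: cohomological input on $\mathbb{P}^2$.} We first claim that for $i<j$ one has $\Ext^l_{\mathbb{P}^2}(\mathcal{E}_i,\mathcal{E}_j)=0$ for $l>0$, and that for $i \le j$ also $\Ext^l_{\mathbb{P}^2}(\mathcal{E}_j,\mathcal{E}_i(-3)) = \Ext^l(\mathcal{E}_j,\mathcal{E}_{i-3})$ vanishes for $l \neq 2$. This is the standard fact that a helix on $\mathbb{P}^2$ is geometric and strong, i.e.\ $\Ext^{>0}$ vanishes from earlier to later terms; we take it from \cite{markov,bondal}. Using Serre duality, the second statement is dual to the first: $\Ext^l(\mathcal{E}_j,\mathcal{E}_i(-3)) \cong \Ext^{2-l}(\mathcal{E}_i,\mathcal{E}_j)^*$. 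Exceptionality of each $\mathcal{E}_i$ handles the case $i=j$.

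\textbf{Step 2: Ext groups on $X$.} Next we tensor the sequence for $\mathcal{O}_X$ with $\mathcal{H}om(\mathcal{E}_i,\mathcal{E}_j)$ and use adjunction, $\Ext^l_X(\mathcal{E}_i|_X,\mathcal{E}_j|_X)=\Ext^l_{\mathbb{P}^2}(\mathcal{E}_i,\mathcal{E}_j|_X)$. This gives a long exact sequence
\[
\cdots \to \Ext^l(\mathcal{E}_i,\mathcal{E}_j(-3)) \to \Ext^l(\mathcal{E}_i,\mathcal{E}_j) \to \Ext^l_X(\mathcal{E}_i|_X,\mathcal{E}_j|_X) \to \Ext^{l+1}(\mathcal{E}_i,\mathcal{E}_j(-3)) \to \cdots.
\]
For $i<j$, Serre duality gives $\Ext^{l+1}(\mathcal{E}_i,\mathcal{E}_j(-3)) \cong \Ext^{1-l}(\mathcal{E}_j,\mathcal{E}_i)^*$. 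By exceptionality of the helix (working within a window containing $i$ and $j$, via the periodicity) this vanishes for $i<j$. Hence $\Ext^1_X=0$ and $\Hom_X(\mathcal{E}_i|_X,\mathcal{E}_j|_X)=\Hom(\mathcal{E}_i,\mathcal{E}_j)$ whenever $i<j$. For $i=j$ we get $\Hom_X=k$ and $\Ext^1_X \cong \Ext^2(\mathcal{E}_i,\mathcal{E}_i(-3)) \cong \Hom(\mathcal{E}_i,\mathcal{E}_i)^*=k$. So each $\mathcal{E}_i|_X$ is a simple bundle on $X$, hence stable with coprime rank and degree, hence elliptically exceptional by the remark from \cite{morph} quoted in Section~\ref{sub.elliptic}.

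\textbf{Step 3: the weak helix conditions.} Finally we restrict the mutation sequences. Restricting to $X$ is exact on sequences of vector bundles, so
\[
0 \to \mathcal{E}_i \to {}^*\Hom(\mathcal{E}_i,\mathcal{E}_{i+1})\otimes\mathcal{E}_{i+1} \to R\mathcal{E}_i \to 0
\]
stays exact after restriction. By Step 2 the Hom spaces are unchanged, so the middle map is still the coevaluation on $X$, and $R_{\mathcal{E}_{i+1}|_X}(\mathcal{E}_i|_X) = (R_{\mathcal{E}_{i+1}}\mathcal{E}_i)|_X$. The second mutation works the same way, provided we know $\Hom_X((R\mathcal{E}_i)|_X,\mathcal{E}_{i+2}|_X) = \Hom(R\mathcal{E}_i,\mathcal{E}_{i+2})$ and that $\Ext^1_X$ between these vanishes. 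We obtain both by applying $\Hom_X(-,\mathcal{E}_{i+2}|_X)$ to the restricted first sequence, using the vanishing from Step 2 and the five lemma. This yields $R R\,\mathcal{E}_i|_X \cong \mathcal{E}_{i+3}|_X$. By Remark~\ref{rem:ellipticHelix}(4), or directly from Step 2, the result is an elliptic helix.

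\textbf{Main obstacle.} The hardest step is the bookkeeping in Step 2 for the second mutation and for pairs with $j-i\ge 3$. There one must check that $\Ext^1(\mathcal{E}_j,\mathcal{E}_i)$ and $\Hom(\mathcal{E}_j,\mathcal{E}_i)$ on $\mathbb{P}^2$ vanish for all $i<j$, and not just inside a single exceptional triple. We would try to get this first from the known strongness of planar helices. Failing that, we would argue by induction on $j-i$ using \eqref{eq:helixExactSeq}, which resolves $\mathcal{E}_{i+3}$ by earlier terms.
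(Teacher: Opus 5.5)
Your route is the paper's: restrict along $0\to\mathcal{E}_j(-3)\to\mathcal{E}_j\to\mathcal{E}_j|_X\to 0$, use $\mathcal{E}_j(-3)\simeq\mathcal{E}_{j-3}$, show $\End(\mathcal{E}_i|_X)=k$, restrict the mutation sequences, and finish with Remark~\ref{rem:ellipticHelix}(4). However, Step 2 as written contains a false claim.

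In the long exact sequence you only kill the right-hand term $\Ext^{1}(\mathcal{E}_i,\mathcal{E}_j(-3))$. You never address the left-hand term $\Hom(\mathcal{E}_i,\mathcal{E}_j(-3))=\Hom(\mathcal{E}_i,\mathcal{E}_{j-3})$. For $j-i\geq 3$ this term is nonzero; for $j=i+3$ it is $\End\mathcal{E}_i=k$. So the conclusion $\Hom_X(\mathcal{E}_i|_X,\mathcal{E}_j|_X)=\Hom(\mathcal{E}_i,\mathcal{E}_j)$ fails in general: restriction is surjective with kernel $\Hom(\mathcal{E}_i,\mathcal{E}_{j-3})$. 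For $\mathcal{E}_i=\mathcal{O}(i)$ and $j-i=3$ the dimensions are $10$ versus $9$, and this kernel is exactly the cubic relation of Corollary~\ref{cor:cubicDivisor}. Likewise, your ``window'' justification of $\Ext^{1-l}(\mathcal{E}_j,\mathcal{E}_i)=0$ only covers $j-i\leq 2$.

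The fix is to state Step 2 only for $0<j-i\leq 2$, which is all Step 3 uses. Then $j-3<i\leq j-1$, so exceptionality of $(\mathcal{E}_{j-3},\mathcal{E}_{j-2},\mathcal{E}_{j-1})$ gives $\mathbf{R}\Hom(\mathcal{E}_i,\mathcal{E}_{j-3})=0$. This kills both outer terms at once, and it is exactly the paper's argument.

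You also do not need strongness of the whole helix. The only $\Ext^1$-vanishing on $\mathbb{P}^2$ you actually use is:
\begin{itemize}
\item $\Ext^1(\mathcal{E}_i,\mathcal{E}_{i+1})=0$, which is part of right mutability;
\item $\Ext^1(R_{\mathcal{E}_{i+1}}\mathcal{E}_i,\mathcal{E}_{i+2})=0$, which is condition (2).
\end{itemize}
Your ``main obstacle'' therefore disappears: pairs with $j-i\geq 3$ are never needed once you invoke Remark~\ref{rem:ellipticHelix}(4), which gets $\Ext^1_X$-vanishing from stability and consecutive data.

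With that correction, Step 3 is sound and somewhat more explicit than the paper. You verify right mutability of $((R_{\mathcal{E}_{i+1}}\mathcal{E}_i)|_X,\mathcal{E}_{i+2}|_X)$ directly by the five lemma. The paper instead restricts both the right and left mutation sequences of consecutive pairs and splices them using $R_{\mathcal{E}_{i+1}}\mathcal{E}_i\simeq L_{\mathcal{E}_{i+2}}\mathcal{E}_{i+3}$. The local freeness of $R_{\mathcal{E}_{i+1}}\mathcal{E}_i$, which you need for restriction to stay exact, follows from condition (2): it exhibits $R_{\mathcal{E}_{i+1}}\mathcal{E}_i$ as the kernel of a surjection of vector bundles onto $\mathcal{E}_{i+3}$.
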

\begin{proof}
We first show that the restriction map induces isomorphisms $(\mathcal{E}_{i-1},\mathcal{E}_{i}) \simeq (\mathcal{E}_{i-1}|_X,\mathcal{E}_{i}|_X)$. Consider the exact sequence
\begin{equation}  \label{eq:restrictEtoX}
     0 \to \mathcal{E}_i(-3) \to \mathcal{E}_i \to \mathcal{E}_i|_X \to 0.
\end{equation}
We deduce the exact sequence
$$0 \to (\mathcal{E}_{i-1}, \mathcal{E}_i(-3)) \to (\mathcal{E}_{i-1}, \mathcal{E}_i) \to (\mathcal{E}_{i-1}, \mathcal{E}_i|_X) \to \ ^1(\mathcal{E}_{i-1}, \mathcal{E}_i(-3)).$$
The desired isomorphism follows from the fact that $\mathcal{E}_i(-3) \simeq \mathcal{E}_{i-3}$ and the fact that $\{\mathcal{E}_{i-3},\mathcal{E}_{i-2},\mathcal{E}_{i-1}\}$ is exceptional. 

Now $R_{\mathcal{E}_i}\mathcal{E}_{i-1}$ is a bundle so the right mutation sequence 
$$ 0 \to \mathcal{E}_{i-1} \to \ ^*(\mathcal{E}_{i-1},\mathcal{E}_i) \otimes \mathcal{E}_i \to R_{\mathcal{E}_i}\mathcal{E}_{i-1} \to 0$$
restricts to a right mutation sequence on $X$ and similarly for left mutations. It now follows that $\underline{\mathcal{E}}|_X$ is also a weak helix. 

Applying $\mathbf{R}\Hom(\mathcal{E}_i, -)$ to (\ref{eq:restrictEtoX}) similarly shows that $\End \mathcal{E}_i|_X = k$. Remark~\ref{rem:ellipticHelix}(4) completes the proof. 
\end{proof}

\section{On the numerics of three-periodic elliptic helices}  \label{sec.numbericalSetup}
We now investigate the numerical properties of three-periodic elliptic helices $\underline{\cE}$ on an elliptic curve $X$. A key invariant is a triple $\vec{\Delta}$ recording the dimensions of $\End \underline{\cE}$ in degree one. In this section, we analyse the numerics of helices to show only ``Markov'' triples $\vec{\Delta}$ can give rise to indexed algebras of polynomial growth.  We enlarge our scope in this section to include objects in $D^b(X)$, as many of the computations work in this context. 

We have an isomorphism $(\deg,\rk) \colon K_0(X) \to \mathbb{Z}^2$. For objects $E,E' \in D^b(X)$ we let $[E],[E']$ denote their images in $K_0(X)$ and 
\begin{equation}   \label{eq:eulerFormOnX}
\chi([E],[E']) = \chi(E,E'):=  \chi(\text{R}\Hom(E,E')) = 
\begin{vmatrix}
    \deg E' & \deg E \\ \rk E' & \rk E
\end{vmatrix}.
\end{equation}
Let $E_0,E_1,E_2 \in D^b(X)$ and consider the right mutation $E'_2 := R_{E_1}E_0$ (defined in terms of exact triangles, see \cite[Proposition 7.1]{bp}) which has
$$[E'_2] = \chi(E_0,E_1) [E_1] - [E_0].
$$
If we write 
$$
\Delta_{ij} = \chi(E_i,E_j)
$$ 
then we have the iterated right mutation $E_3:= R_{E_2} E'_2$ satisfies
$$[E_3] = \chi(E'_2,E_2) [E_2] - [E'_2] = (\Delta_{01}\Delta_{12} - \Delta_{02})[E_2] - \Delta_{01}[E_1] + [E_0].
$$
Applying $\chi(E_2,-)$ to both sides we obtain the formulas
\begin{equation}  \label{eq.basicMutateFormulae}
\Delta_{23} = \Delta_{01}\Delta_{12} - \Delta_{02}, \ \ [E_3] = [E_0] - \Delta_{01} [E_1] + \Delta_{23} [E_2].
\end{equation}
We may repeat the process forwards by mutating $E_1$ past $E_2$ and $E_3$ to define $E_4$, and backwards by mutating $E_2$ left past $E_1$ and $E_0$ to get $E_{-1}$ and so on. If all these objects turn out to be vector bundles, then we obtain an elliptic helix of period three in ${\sf Coh} X$ as defined in Definition~\ref{def:ellipticHelix}. 

\begin{definition}  \label{def.helicalCOnd}
The sequences $\underline{E} = \{E_i\}_{i \in \mathbb{Z}}$ generated by a seed $(E_0,E_1,E_2)$ by mutation as above is said to satisfy the {\em three-helical condition}. We call $\underline{E}$ the {\em three-helix in $D^b(X)$ generated by the seed}.
\end{definition}

We abbreviate $\Delta_{i-1,i}=: \Delta_i$ and introduce 
$$
A_{j} := \begin{pmatrix} 0 & 1 & 0 \\ 0 & 0 & 1 \\ 1 & -\Delta_{j} & \Delta_{j+2} \\
\end{pmatrix}
$$
so (\ref{eq.basicMutateFormulae}) gives
\begin{equation}  \label{eq.matrixMutateFormula}
A_{i} \begin{pmatrix} [E_{i-1}] \\ [E_{i}] \\ [E_{i+1}] \end{pmatrix} =
\begin{pmatrix} [E_i] \\ [E_{i+1}] \\ [E_{i+2}] \end{pmatrix}
\end{equation}
We let $A:=A_{3}A_{2}A_{1}$.

The following is \cite[Corollary~7.11]{morph} which is easily reproved with our current notation.
\begin{proposition}  \label{prop.3period}
We have $\Delta_{i+3} = \Delta_i$ for all $i$ and hence $A_{i+3} = A_i$. 
\end{proposition}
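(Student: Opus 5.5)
We need to show $\Delta_{i+3}=\Delta_i$, where $\Delta_i=\chi(E_{i-1},E_i)$ for the three-helix generated by a seed. By translating indices (the construction is shift-invariant: any three consecutive terms generate the same helix), it suffices to prove $\Delta_4=\Delta_1$, i.e. $\chi(E_3,E_4)=\chi(E_0,E_1)$. The periodicity $A_{i+3}=A_i$ then follows at once, since $A_i$ depends only on $\Delta_i$ and $\Delta_{i+2}$.

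The plan is to use the mutation formula (\ref{eq.basicMutateFormulae}) and the bilinearity and antisymmetry of $\chi$. Antisymmetry is visible from the determinant expression (\ref{eq:eulerFormOnX}): $\chi(E',E)=-\chi(E,E')$ and $\chi(E,E)=0$.

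First we compute $\chi(E_3,E_j)$ for $j=1,2$. From $[E_3]=[E_0]-\Delta_{01}[E_1]+\Delta_{23}[E_2]$ we get
\[
\chi(E_3,E_2)=\Delta_{02}-\Delta_{01}\Delta_{12}=-\Delta_{23},
\]
which is consistent with $\chi(E_2,E_3)=\Delta_{23}$. We also get
\[
\chi(E_3,E_1)=\Delta_{01}-\Delta_{23}\Delta_{12}.
\]
Hence $\chi(E_1,E_3)=\Delta_{12}\Delta_{23}-\Delta_{01}$, which is the analogue of the defining formula for $\Delta_{23}$ shifted by one.

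Next we apply (\ref{eq.basicMutateFormulae}) to the seed $(E_1,E_2,E_3)$:
\[
[E_4]=[E_1]-\Delta_{12}[E_2]+\Delta_{34}[E_3], \qquad \Delta_{34}=\Delta_{12}\Delta_{23}-\Delta_{13}.
\]
Substituting the value of $\Delta_{13}$ found above gives $\Delta_{34}=\Delta_{12}\Delta_{23}-(\Delta_{12}\Delta_{23}-\Delta_{01})=\Delta_{01}$, which is exactly $\Delta_4=\Delta_1$.

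As a cross-check, one can instead apply $\chi(E_3,-)$ to the expression for $[E_4]$:
\[
\chi(E_3,E_4)=\chi(E_3,E_1)-\Delta_{12}\chi(E_3,E_2)=\Delta_{01}-\Delta_{23}\Delta_{12}+\Delta_{12}\Delta_{23}=\Delta_{01}.
\]
Both routes give the same answer.

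For negative indices we need the backward direction, where terms are defined by left mutation. Here I would note that left mutation inverts right mutation, so the relation (\ref{eq.matrixMutateFormula}) holds for every $i$, and the same identity propagates. The only real point of care is confirming that the helix generated backwards satisfies the same formulas with shifted indices.
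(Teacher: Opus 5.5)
Your proof is correct and is essentially the paper's argument. The paper applies $\chi(E_1,-)$ to (\ref{eq.basicMutateFormulae}) to get $\Delta_{13}=\Delta_2\Delta_3-\Delta_1$, and compares this with $\Delta_{13}=\Delta_2\Delta_3-\Delta_4$ from the mutation formula for the shifted seed $(E_1,E_2,E_3)$; you do the same computation by pairing on the other side and using antisymmetry, and your remark on backward indices makes explicit the shift-invariance the paper leaves implicit.
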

\begin{proof}
Applying $\chi(E_1,-)$ to (\ref{eq.basicMutateFormulae}) now gives
$$ - \Delta_1 + \Delta_2 \Delta_3 = \Delta_{13} = \Delta_2\Delta_3 - \Delta_4$$
so $\Delta_4 = \Delta_1$. 
\end{proof}

The following is elementary using the 3-periodicity of the $\Delta_i$'s.
\begin{proposition} \label{prop.eigenA}
$\det(A_i) = 1$ and $A_i$ cyclically permutes the coordinates of $\begin{pmatrix} \Delta_{i+1} \\ \Delta_{i+2} \\ \Delta_{i} \end{pmatrix}$ for any $i$. In particular $A:= A_3A_2A_1$ has determinant 1 and eigenvector $\begin{pmatrix} \Delta_{2} \\ \Delta_{3} \\ \Delta_{1} \end{pmatrix}$ with eigenvalue 1. 
\end{proposition}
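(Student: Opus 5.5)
The plan is to compute directly with the explicit matrix $A_i$, using only the 3-periodicity $\Delta_{i+3}=\Delta_i$ from Proposition~\ref{prop.3period}.

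\emph{Determinant.} Expand $\det(A_i)$ along the first column. The only nonzero entry in that column is the $1$ in position $(3,1)$. Its complementary minor is
$$\begin{vmatrix} 1 & 0 \\ 0 & 1\end{vmatrix}=1,$$
and its cofactor sign is $(-1)^{3+1}=1$, so $\det(A_i)=1$.

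\emph{Cyclic permutation.} Multiply $A_i$ against the vector $v_i:=(\Delta_{i+1},\Delta_{i+2},\Delta_i)^T$.
\begin{itemize}
\item The first row of $A_i$ picks out the second coordinate, giving $\Delta_{i+2}$.
\item The second row picks out the third coordinate, giving $\Delta_i$.
\item The third row gives $\Delta_{i+1}-\Delta_i\Delta_{i+2}+\Delta_{i+2}\Delta_i=\Delta_{i+1}$.
\end{itemize}
Hence $A_i v_i=(\Delta_{i+2},\Delta_i,\Delta_{i+1})^T$, which is $v_i$ with its coordinates cyclically permuted. By periodicity this equals $v_{i+1}=(\Delta_{i+2},\Delta_{i+3},\Delta_{i+1})^T$, since $\Delta_{i+3}=\Delta_i$. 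So the key identity is $A_iv_i=v_{i+1}$.

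\emph{Consequences for $A$.} Multiplicativity of the determinant gives $\det A=\det A_3\det A_2\det A_1=1$. Iterating the key identity gives
$$Av_1=A_3A_2A_1v_1=A_3A_2v_2=A_3v_3=v_4.$$
By periodicity $v_4=v_1=(\Delta_2,\Delta_3,\Delta_1)^T$, so $v_1$ is an eigenvector of $A$ with eigenvalue $1$.

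This vector is nonzero, because each $\Delta_j\neq 0$ by the standing assumption $\Delta_j=\dim\Hom(\mathcal{E}_{j-1},\mathcal{E}_j)\neq 0$ on helices. In the $D^b(X)$ setting one must instead assume the $\Delta_j$ are not all zero. Apart from this nonvanishing point, which is the only place needing care, every step is a direct computation.
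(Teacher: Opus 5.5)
Your proof is correct and is the same direct computation the paper intends; the paper gives no written proof, only the remark that the result is elementary using the 3-periodicity $\Delta_{i+3}=\Delta_i$. The identity $A_i v_i = v_{i+1}$, iterated to $A v_1 = v_4 = v_1$, is exactly that computation. Your note that $v_1 \neq 0$ needs the $\Delta_j$ not all zero is a sensible extra precaution the paper leaves implicit.
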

As one can check by explicit calculation, we have 
\begin{equation}  \label{eq:Amatrix}
A = \begin{pmatrix}
1 & -\Delta_{1} & \Delta_{3} \\ \Delta_{1} & 1-\Delta_{1}^{2} & \Delta_{1}\Delta_{3}-\Delta_{2} \\ \Delta_{1}\Delta_{2}-\Delta_{3} & \Delta_{1}\Delta_{3}+\Delta_{2}(1-\Delta_{1}^{2}) & \Delta_{2}(\Delta_{1}\Delta_{3}-\Delta_{2})+1-\Delta_{3}^{2}    
\end{pmatrix}.    
\end{equation}

This shows the other eigenvalues of $A$ are
\begin{equation}  \label{eq.evaluesA}
\lambda_{\pm}:= \frac{1}{2}\biggl( 2+\Delta_{1}\Delta_{2}\Delta_{3}-(\Delta_{1}^{2}+\Delta_{2}^{2}+\Delta_{3}^{2}) \pm \sqrt{(2+\Delta_{1}\Delta_{2}\Delta_{3}-(\Delta_{1}^{2}+\Delta_{2}^{2}+\Delta_{3}^{2}))^{2}-4} \biggr).
\end{equation}

\begin{proposition} \label{prop.MarkovTypeCondition}
The eigenvalues of $A$ have modulus 1 iff 
$$\Delta_1^2 + \Delta_2^2 + \Delta_3^2 - \Delta_1\Delta_2\Delta_3 = 0,1,2,3 \ \text{or}\ 4.$$ 
In this case the eigenvalues are 1 and (a) 1,1 (b) $e^{\pm i\pi/3}$  (c) $\pm i$ (d) $e^{\pm i2\pi/3}$  (e) $-1,-1$ respectively. 
\end{proposition}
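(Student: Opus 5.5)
The plan is to reduce the statement to the elementary question of when the quadratic $\lambda^2 - T\lambda + 1$ has both roots on the unit circle, where $T := \lambda_+ + \lambda_-$. By Proposition~\ref{prop.eigenA}, $\det A = 1$ and $1$ is an eigenvalue, so the characteristic polynomial of $A$ factors as $(\lambda - 1)(\lambda^2 - T\lambda + 1)$. From (\ref{eq.evaluesA}), or directly from the trace of (\ref{eq:Amatrix}), we have $\operatorname{tr} A = 1 + T$. Setting $m := \Delta_1^2 + \Delta_2^2 + \Delta_3^2 - \Delta_1\Delta_2\Delta_3$, this gives $T = 2 - m$. As a sanity check, the diagonal of (\ref{eq:Amatrix}) sums to $3 - \Delta_1^2 - \Delta_2^2 - \Delta_3^2 + \Delta_1\Delta_2\Delta_3$, which equals $1 + (2 - m)$, as required.

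Next we analyse $\lambda^2 - T\lambda + 1$ with $T$ real; in fact $T$ is an integer, since the $\Delta_i$ are Euler characteristics. The roots satisfy $\lambda_+\lambda_- = 1$.
\begin{itemize}
\item If $|T| > 2$, the roots are real and distinct with product $1$, so one has modulus greater than $1$.
\item If $|T| \le 2$, the discriminant $T^2 - 4$ is non-positive. Then either the roots are complex conjugates with product $1$, hence both of modulus $1$, or $T = \pm 2$ and there is a double root $\pm 1$.
\end{itemize}
Hence all eigenvalues have modulus $1$ if and only if $-2 \le 2 - m \le 2$, that is, $0 \le m \le 4$. Since $m$ is an integer, this means $m \in \{0, 1, 2, 3, 4\}$.

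Finally, we read off the eigenvalues case by case, solving $\lambda^2 - (2 - m)\lambda + 1 = 0$:
\begin{itemize}
\item $m = 0$: $T = 2$, giving the double root $1$, case (a).
\item $m = 1$: $T = 1$, giving $e^{\pm i\pi/3}$, case (b).
\item $m = 2$: $T = 0$, giving $\pm i$, case (c).
\item $m = 3$: $T = -1$, giving $e^{\pm 2\pi i/3}$, case (d).
\item $m = 4$: $T = -2$, giving the double root $-1$, case (e).
\end{itemize}
Together with the eigenvalue $1$, these are exactly the listed cases.

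The only step needing care is computing $\operatorname{tr} A$, or equivalently confirming (\ref{eq.evaluesA}). This can be taken from the explicit matrix (\ref{eq:Amatrix}) rather than recomputing the product $A_3A_2A_1$. Otherwise the argument is routine.
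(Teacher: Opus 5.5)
Your proof is correct and follows the same route as the paper. The paper's one-line argument is that the two remaining eigenvalues have product $1$ and sum $2-(\Delta_1^2+\Delta_2^2+\Delta_3^2-\Delta_1\Delta_2\Delta_3)$, so unit modulus forces that sum to be $2\cos\theta$, of magnitude at most $2$; you have additionally written out the trace check against (\ref{eq:Amatrix}) and the converse direction that the paper leaves implicit.
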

\begin{proof}
The sum of the remaining two eigenvalues of $A$ has to have magnitude $\leq 2$ since $\lambda_{+}+\lambda_{-}=2\cos \theta$ for some $\theta \in \mathbb{R}$, which gives the 5 possibilities above. 
\end{proof}

Calculating directly with the explicit form of $A$ given in (\ref{eq:Amatrix}) also gives the following.
\begin{lemma}  \label{lem.AminusI}
Suppose $\vec{\nabla} := \left(
\begin{smallmatrix}
\Delta_2 \\ 
\Delta_3 - \Delta_1 \Delta_2 \\ \Delta_1  
\end{smallmatrix}\right)$. Then 
\begin{enumerate}
\item{} $\vec{\nabla}^T (A-I) = 0$, and

\item{} if $\Delta_1 \neq 0$, then $\rk (A-I) =2$.
\end{enumerate}
\end{lemma}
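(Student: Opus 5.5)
The plan is to verify both claims directly from the explicit form (\ref{eq:Amatrix}) of $A=A_3A_2A_1$. No further structural input is needed beyond the 3-periodicity of Proposition~\ref{prop.3period}, which is already built into (\ref{eq:Amatrix}).

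For (1), it suffices to show $\vec{\nabla}^T A = \vec{\nabla}^T$, that is, the dot product of $\vec{\nabla}$ with the $j$-th column of $A$ equals the $j$-th entry of $\vec{\nabla}$ for $j=1,2,3$. I would do this column by column.

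\emph{Column 1.} The column is $(1,\Delta_1,\Delta_1\Delta_2-\Delta_3)^T$. The pairing gives
$$\Delta_2+(\Delta_3-\Delta_1\Delta_2)\Delta_1+\Delta_1(\Delta_1\Delta_2-\Delta_3)=\Delta_2,$$
since the last two terms cancel.

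\emph{Column 2.} The column is $(-\Delta_1,\,1-\Delta_1^2,\,\Delta_1\Delta_3+\Delta_2(1-\Delta_1^2))^T$. Expanding the pairing, the terms $\pm\Delta_1^3\Delta_2$, $\pm\Delta_1^2\Delta_3$ and $\pm\Delta_1\Delta_2$ cancel in pairs, leaving $\Delta_3-\Delta_1\Delta_2$ as required.

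\emph{Column 3.} The column is $(\Delta_3,\,\Delta_1\Delta_3-\Delta_2,\,\Delta_2(\Delta_1\Delta_3-\Delta_2)+1-\Delta_3^2)^T$. This is a similar polynomial identity in $\Delta_1,\Delta_2,\Delta_3$ whose result should be $\Delta_1$. This is the only mildly messy step, and I would just expand and collect monomials.

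Alternatively, one can avoid the third expansion. Proposition~\ref{prop.eigenA} shows that the $A_i$ have a cyclic structure, so one could check that $\vec{\nabla}^T$ transforms appropriately under each $A_i$, or equivalently is a left fixed vector of $A$. Since the direct expansion is short, I would simply do it.

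For (2), part (1) with $\vec{\nabla}\neq 0$ already gives $\rk(A-I)\le 2$. Indeed, $\vec{\nabla}$ has third entry $\Delta_1\neq 0$, so it is nonzero and lies in the left kernel. For the reverse inequality I would exhibit a nonzero $2\times 2$ minor. The first two rows of $A-I$ are $(0,-\Delta_1,\Delta_3)$ and $(\Delta_1,-\Delta_1^2,\Delta_1\Delta_3-\Delta_2)$. Their minor on the first two columns is
$$0\cdot(-\Delta_1^2)-(-\Delta_1)\Delta_1=\Delta_1^2\neq 0.$$
Hence $\rk(A-I)=2$.

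The main obstacle is purely bookkeeping in the third column computation of (1); no conceptual difficulty is expected.
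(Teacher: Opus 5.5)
Your proposal is correct and matches the paper's argument, which just asserts that both parts follow by direct calculation from the explicit form (\ref{eq:Amatrix}) of $A$; your column-by-column check of $\vec{\nabla}^T A=\vec{\nabla}^T$, together with the nonzero minor $\Delta_1^2$ in the top-left $2\times 2$ block of $A-I$, is that calculation. The third column you deferred does come out as claimed: the monomials $\Delta_2\Delta_3$, $\Delta_1\Delta_3^2$, $\Delta_1\Delta_2^2$ and $\Delta_1^2\Delta_2\Delta_3$ cancel in pairs, leaving exactly $\Delta_1$.
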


We will be interested in the case where $\underline{\cE} = (\cE_i)_{i \in \mathbb{Z}}$ is an elliptic helix. In this case $\End(\underline{\cE})$ is a positively $\mathbb{Z}$-indexed algebra and for $i<j$ we have $\dim \Hom(\cE_i,\cE_j) = \chi(\cE_i,\cE_j)$. We are interested in the Hilbert series of $\End(\underline{\cE})$ and hence introduce the following
\begin{definition} \label{def.growth}
An ellitpic helix $\underline{\cE}$ is said to have {\em polynomial growth} if there is a poylynomial $p(n)$ such that for all $i<j$, we have $\chi(\cE_i,\cE_j) \leq p(j-i)$. If on the other hand, for any $i$ we have $\overline{\lim}_{n \to \infty} \chi(\cE_i,\cE_{i+n})^{1/n} > 1$, then we will say that $\underline{\cE}$ has {\em exponential growth}.
\end{definition}
Consider now an elliptic helix $\underline{\cE}$. We will study the ``Markov'' case where $\Delta_1^2 + \Delta_2^2 + \Delta_3^2 = \Delta_1\Delta_2\Delta_3$, that is, all eigenvalues of $A$ are 1, in detail later. Suppose now that this is not the case and furthermore that we are not in case (e) of Proposition~\ref{prop.MarkovTypeCondition}. Then, the fact that $\lambda_+\lambda_- = 1$ ensures the eigenvalues $1,\lambda_+,\lambda_-$ are all distinct. Fix $i = 0,1$ or 2. Then eigenvalue theory gives constants $\delta_1,\delta_+,\delta_-,\rho_1,\rho_+,\rho_-$such that 
\begin{equation}  \label{eq.degRkFormula}
\begin{pmatrix}
\deg(\cE_{i+3n}) \\ \rk(\cE_{i+3n})
\end{pmatrix} = 
\begin{pmatrix}
\delta_1 + \lambda_+^n\delta_+ + \lambda_-^n \delta_- \\
\rho_1 + \lambda_+^n\rho_+ + \lambda_-^n \rho_-
\end{pmatrix}
\end{equation}
\begin{proposition}  \label{prop.whyMarkov}
Let $\underline{\cE}$ be an elliptic helix. Then either $\underline{\cE}$ has exponential  growth or we must be in the {\em Markov case} where $\Delta_1^2 + \Delta_2^2 + \Delta_3^2 = \Delta_1\Delta_2\Delta_3$ in which case $\underline{\cE}$ has polynomial growth. 
\end{proposition}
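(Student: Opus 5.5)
We would split according to the value $c:=\Delta_1^2+\Delta_2^2+\Delta_3^2-\Delta_1\Delta_2\Delta_3$ and read off the asymptotics of $[\cE_{i+3n}]$ from the Jordan form of $A$. Two facts about an elliptic helix drive every case.

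The first fact is that, by Remark~\ref{rem:ellipticHelix}(1), the slopes $\mu(\cE_j)$ are strictly increasing. Consequently the classes $[\cE_{i+3n}]$, $n\geq 0$, are pairwise distinct. Moreover, if the ranks $\rk\cE_{i+3n}$ are bounded, then the slopes lie in $\frac{1}{R!}\mathbb{Z}$ for some $R$, so they tend to $+\infty$.

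The second fact is that for $j>i$ we have
$$\chi(\cE_i,\cE_j)=\rk\cE_i\,\rk\cE_j\,(\mu(\cE_j)-\mu(\cE_i)).$$
Combined with the first fact, this shows that $\chi(\cE_i,\cE_{i+3n})$ cannot stay bounded as $n\to\infty$. Indeed, if the ranks are bounded the slope difference diverges. If the ranks are unbounded, we would use that the slope difference is at least $1/(\rk\cE_i\rk\cE_j)$ together with the fact that the slopes are increasing beyond $\mu(\cE_{i+3})$, giving a lower bound of the form $\rk\cE_i\,\rk\cE_j\,(\mu(\cE_{i+3})-\mu(\cE_i))$, which is unbounded.

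In the cases $c=1,2,3$ (Proposition~\ref{prop.MarkovTypeCondition}(b),(c),(d)) the eigenvalues of $A$ are three distinct roots of unity, so $A$ is diagonalisable of finite order and $A^6=I$. By (\ref{eq.matrixMutateFormula}) and Proposition~\ref{prop.3period}, this gives $[\cE_{i+18}]=[\cE_i]$, which contradicts strictly increasing slope; so these cases do not occur.

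In case (e), $c=4$, the eigenvalues are $1,-1,-1$. Hence $[\cE_{i+3n}]=v+(-1)^n(a+nb)$ for fixed vectors $v,a,b$. Positivity of $\rk\cE_{i+3n}$ for all $n\in\mathbb{Z}$ forces the rank component of $b$ to vanish, so the ranks are bounded. Then $\chi(\cE_i,\cE_{i+3n})=\alpha+(-1)^n(\beta+n\gamma)$ must be positive for all $n>0$, which forces $\gamma=0$. So $\chi$ is bounded, contradicting the second fact. If $A$ happens to be diagonalisable here, $A^2=I$ and the periodicity argument already applies.

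If $c<0$, then $\lambda_+>1>\lambda_->0$ are real and distinct, and (\ref{eq.degRkFormula}) applies. If $\delta_+=\rho_+=0$, the classes $[\cE_{i+3n}]$ for $n\geq0$ would be bounded, hence finite in number, contradicting the first fact. Bilinearity of $\chi$ gives
$$\chi(\cE_i,\cE_{i+3n})=\chi([\cE_i],\vec v_1)+\lambda_+^n\chi([\cE_i],\vec v_+)+\lambda_-^n\chi([\cE_i],\vec v_-).$$
If the $\lambda_+^n$ coefficient vanished, $\chi$ would be bounded for $n\geq 0$, again impossible. So this coefficient is nonzero, necessarily positive, and $\chi(\cE_i,\cE_{i+3n})^{1/n}\to\lambda_+>1$, which is exponential growth. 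Since $c$ is an integer and cases $c=1,\dots,4$ are excluded, and $c>4$ would give $\lambda_\pm$ real negative with $|\lambda_-|>1$, that subcase is handled by the same argument with $|\lambda_\pm|$ in place of $\lambda_\pm$.

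Finally, in the Markov case $c=0$ all eigenvalues of $A$ equal $1$, so $A$ is unipotent and $A^n$ has entries polynomial in $n$ of degree at most $2$. Therefore $[\cE_{i+3n}]$ grows polynomially in $n$, uniformly over $i\in\{0,1,2\}$, and so does $\chi(\cE_i,\cE_j)=\dim\Hom(\cE_i,\cE_j)$ in $j-i$. This is polynomial growth.

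The main obstacle we expect is the "cannot stay bounded" step, namely ruling out vanishing leading coefficients and settling case (e). We would handle it uniformly through the slope inequality above, using the integrality of degrees and ranks to force divergence.
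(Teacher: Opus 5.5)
Your argument is correct, and its skeleton matches the paper's: split on the eigenvalues of $A$ (Proposition~\ref{prop.MarkovTypeCondition}), expand $\chi(\cE_i,\cE_{i+3n})$ linearly along the Jordan decomposition, and show that each non-Markov case either has a dominant eigenvalue of modulus $>1$ with non-zero coefficient or cannot occur. The difference is the key input you use to rule out the degenerate cases.

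The paper uses only the sign pattern $\chi(\cE_j,\cE_{j+3n})>0$ for $n>0$ and $<0$ for $n<0$, which comes from the two-sidedness of the helix and the antisymmetry of $\chi$. In the roots-of-unity cases this contradicts $\chi(\cE_j,\cE_{j+3N})=\chi(\cE_j,\cE_j)=0$. For the $(-1)$-Jordan block it is a parity argument.

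You instead use stability (strictly increasing slopes) together with integrality of degree and rank. This shows that $\chi(\cE_i,\cE_{i+3n})$ is unbounded as $n\to+\infty$, and you only ever look in the forward direction. The paper's route is shorter and purely numerical. Yours makes the non-vanishing of the $\lambda_+^n$-coefficient for $c<0$ automatic. That is worth having, because the sign pattern alone does not exclude, e.g., $\chi(\cE_j,\cE_{j+3n})=C(1-\lambda_-^n)$ with $C>0$, which has the sign of $n$. Some integrality input of the kind you supply is needed at that step in any case.

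A few small repairs:
\begin{itemize}
\item For $c=2$ the eigenvalues $\pm i$ have order $4$, so $A^6=A^2\neq I$. Use $A^{12}=I$, or just that $A$ has finite order $N$, which gives $[\cE_{i+3N}]=[\cE_i]$. The contradiction with increasing slopes is unaffected.
\item In the Markov case, polynomial bounds on $[\cE_{i+3n}]$ only bound $\chi(\cE_i,\cE_j)$ polynomially in $|i|$ and $|j|$ jointly, not in $j-i$. You also need that $\chi(\cE_i,\cE_{i+k})$ depends only on $k$ and $i \bmod 3$. This follows from Proposition~\ref{prop.3period} together with the helix recursion, i.e.\ Proposition~\ref{prop:hilbertSeries3periodic}. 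The paper leaves this step implicit as well.
\item For $c>4$, the alternating sign of $\lambda_-^n$ with $\lambda_-<-1$ shows this case cannot occur at all. Your formal deduction of exponential growth there is still valid.
\end{itemize}
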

\begin{proof}
Suppose first that $A$ is diagonalisable so we can use Equation~(\ref{eq.degRkFormula}) above. Let $j \in \{0,1,2\}$ and $d_j = \deg(\cE_j), r_j = \rk(\cE_j)$. Then  
\begin{equation}  \label{eq:chiEE}
\chi(\cE_j,\cE_{j+3n}) = 
\begin{vmatrix}
    \delta_1  & d_j \\
\rho_1   & r_j
\end{vmatrix} +
\lambda_+^n\begin{vmatrix}
    \delta_+ & d_j \\
\rho_+  & r_j
\end{vmatrix} +
\lambda_-^n\begin{vmatrix}
    \delta_- & d_j \\
\rho_-  & r_j
\end{vmatrix}.    
\end{equation}

Now $\chi(\cE_j,\cE_{j+3n})>0$ for $n>0$ and $\chi(\cE_j,\cE_{j+3n})<0$ for $n<0$. This forces the second and third determinants above to be non-zero. If $\lambda_{\pm}$ are real and not $\pm 1$, then we have 
exponential growth since we may assume $\lambda_+ >1$. Otherwise, by Proposition~\ref{prop.MarkovTypeCondition}, $\lambda_{\pm}$ are roots of unity so $\chi(\cE_j,\cE_{j+3n})$ is periodic in $n$ and we cannot have $\chi(\cE_j,\cE_{j+3n})>0$ for $n>0$ and $\chi(\cE_j,\cE_{j+3n})<0$ for $n<0$. 

Now suppose that $A$ is not diagonalisable. Assuming we are not in the Markov case, then -1 must be an eigenvalue corresponding to a $2\times 2$ Jordan block for $A$. Repeating the above analysis we find this time that there are three determinants $D_1,D_{-1}, D'_{-1}$ such that 
$$\chi(\cE_j,\cE_{j+3n}) = D_1 + (-1)^n D_{-1} + (-1)^{n-1}nD'_{-1}.$$
Once again, it is not possible that $\chi(\cE_j,\cE_{j+3n})$ always has the same sign as $n$. 

Finally, suppose we are in the Markov case so $A$ only has 1 as its sole eigenvalue. Using the appropriate Jordan canonical form for $A$, we find scalars $D,D',D''$ such that 
$\chi(\cE_i, \cE_{j+3n}) = D + n D' + n^2 D''$
so we are done. (In fact $D''$ must also be zero so the growth is linear). 
\end{proof}

\section{Hilbert series of regular algebras associated to elliptic helices}  \label{sec:hilbertSeries}

In this section, we continue our study of an elliptic helix $\underline{\mathcal{E}}$ on an elliptic curve $X$ and use the notation in Section~\ref{sec.numbericalSetup}. We will compute the Hilbert series of the algebras $\End \underline{\mathcal{E}}$ and its quadratic cover $\mathbb{S} = \mathbb{S}^{nc}(\underline{\mathcal{E}})$. The method here differs from the direct ad hoc calculation for the ``equigenerated'' case of elliptic helices in \cite[Section~8]{morph}. The method also gives a nice explanation of the relationship between the two Hilbert series since they are both computed from the corresponding helices. 

We work with relative Hilbert series as defined in \cite[Definition~8.4]{morph}, the relevant ones of which are the following. 
\begin{eqnarray}
    H_{\mathbb{S},n}(t) & := \sum_{i \in \mathbb{Z}} \dim_k (e_n \mathbb{S}^{nc}(\underline{\cE})e_{n + i}) t^i \\
    H_{\cE,n}(t) & := \sum_{i \in \mathbb{Z}} \dim \Hom_X(\cE_{-n-i},\cE_{-n})t^i.
\end{eqnarray}
The latter is the Hilbert series for $\End (\underline{\cE})$. 
We will employ the same method for computing both of these and start with $H_{\cE,n}$ first. 

We re-write the exact sequence (\ref{eq:helixExactSeq}) induced by the helical condition as
\begin{equation}  \label{eq:helixExactSeqNumerical}
0 \to \cE_{-n-3} \to \cE_{-n-2}^{\oplus \Delta_{-n-2}} \to \cE_{-n-1}^{\oplus \Delta_{-n}} \to \cE_{-n} \to 0. 
\end{equation}

Bilinearity of the Euler form gives
\begin{equation}  \label{eq:hilbertRecurRelation}
\chi(\cE_{-n-i},\cE_{-n}) -\Delta_{-n}\ \chi(\cE_{-n-i},\cE_{-n-1}) + \Delta_{-n-2}\ \chi(\cE_{-n-i},\cE_{-n-2}) -\chi(\cE_{-n-i},\cE_{-n-3}) =0
\end{equation}
which suggests the following result.

\begin{lemma}  \label{lem:hilbertRecurRelation}
$H_{\cE,n}(t) - \Delta_{-n} t H_{\cE,n+1}(t) +\Delta_{-n-2} t^2 H_{\cE,n+2}(t)- t^3 H_{\cE,n+3}(t) = 1-t^3$.
\end{lemma}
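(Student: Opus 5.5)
Compare coefficients of $t^i$ on both sides. On the left, using $H_{\cE,m}(t) = \sum_j \dim \Hom(\cE_{-m-j},\cE_{-m})t^j$, the coefficient of $t^i$ is
$$
h_{0}(i) - \Delta_{-n} h_{1}(i-1) + \Delta_{-n-2} h_{2}(i-2) - h_{3}(i-3),
$$
where $h_{a}(i-a) := \dim\Hom(\cE_{-n-i},\cE_{-n-a})$ for $a=0,1,2,3$. All four terms involve morphisms from the same source $\cE_{-n-i}$. The coefficient is therefore the alternating sum of $\dim \Hom(\cE_{-n-i},-)$ applied to the four nonzero terms of the exact sequence (\ref{eq:helixExactSeqNumerical}). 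We must show this sum equals $1$ for $i=0$, equals $-1$ for $i=3$, and equals $0$ otherwise.

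We split into cases according to where $-n-i$ lies relative to $\{-n-3,\dots,-n\}$.

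For $i\ge 4$, the source index is strictly less than every target index. By Definition~\ref{def:ellipticHelix} all higher $\Ext$'s from $\cE_{-n-i}$ to the four terms vanish, so $\dim\Hom = \chi$. The vanishing is then exactly (\ref{eq:hilbertRecurRelation}), which comes from additivity of $\chi$ along (\ref{eq:helixExactSeqNumerical}).

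For $i<0$, the source index exceeds every target index. Every $\Hom$ vanishes because the bundles are stable with strictly increasing slope (Remark~\ref{rem:ellipticHelix}(1)), so the coefficient is $0$.

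The boundary cases $i=0,1,2,3$ are the main point. For these I would compute each $\Hom$ directly, using only three facts:
\begin{itemize}
\item $\Hom(\cE_j,\cE_j)=k$;
\item $\Hom(\cE_j,\cE_l)=0$ for $j>l$, by slope;
\item $\Hom=\chi$ for $j<l$.
\end{itemize}

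For $i=0$, only the first term survives and it gives $\dim\Hom(\cE_{-n},\cE_{-n})=1$.

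For $i=3$, the source is $\cE_{-n-3}$. The terms give
$$
\chi(\cE_{-n-3},\cE_{-n}) - \Delta_{-n}\chi(\cE_{-n-3},\cE_{-n-1}) + \Delta_{-n-2}\Delta_{-n-2} - 1,
$$
with the last term coming from $\End\cE_{-n-3}=k$. Compare this with the full Euler relation (\ref{eq:hilbertRecurRelation}) at $i=3$. There the last term is $\chi(\cE_{-n-3},\cE_{-n-3})$, which is $1-1=0$ for an elliptically exceptional object. Hence our sum equals $0-(1-0)=-1$, as required.

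For $i=1$, the source is $\cE_{-n-1}$. The sum is $\Delta_{-n}-\Delta_{-n}\cdot 1 + 0 - 0 = 0$, using the identification $\dim\Hom(\cE_{-n-1},\cE_{-n})=\Delta_{-n}$.

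For $i=2$, the source is $\cE_{-n-2}$. The sum is $\chi(\cE_{-n-2},\cE_{-n}) - \Delta_{-n}\Delta_{-n-1} + \Delta_{-n-2}$. It vanishes by the mutation formula (\ref{eq.basicMutateFormulae}), $\Delta_{02}=\Delta_{01}\Delta_{12}-\Delta_{23}$, shifted appropriately and combined with the 3-periodicity $\Delta_{-n-2}=\Delta_{-n+1}$ from Proposition~\ref{prop.3period}.

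The step I expect to need most care is matching the indices of the $\Delta$'s in the $i=2$ case, and more generally in (\ref{eq:helixExactSeqNumerical}), with the conventions $\Delta_j=\dim\Hom(\cE_{j-1},\cE_j)$ and periodicity. Alternatively, one can avoid the index bookkeeping: apply $\Hom(\cE_{-n-2},-)$ to (\ref{eq:helixExactSeqNumerical}) and note that $\Hom(\cE_{-n-2},-)$ is exact on it here. The relevant $\Ext^1$'s vanish, since the only potentially problematic one, $\Ext^1(\cE_{-n-2},\cE_{-n-3})$, enters only through the leftmost term. Exactness then directly yields the $0$.
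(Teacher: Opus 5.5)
Your proposal is correct and follows the paper's proof. Both compare coefficients of $t^i$, use (\ref{eq:hilbertRecurRelation}) for $i>3$, evaluate $i=0,1$ directly, and correct the Euler relation at $i=3$; for $i=2$ the paper corrects (\ref{eq:hilbertRecurRelation}) via Serre duality, $\Delta_{-n-2}\dim\Ext^1(\cE_{-n-2},\cE_{-n-2})=\dim\Ext^1(\cE_{-n-2},\cE_{-n-3})$, which is the same numerical identity as your appeal to (\ref{eq.basicMutateFormulae}) with 3-periodicity.

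One caution about your alternative route for $i=2$: its justification is wrong, though your main argument does not rely on it. The group $\Ext^1(\cE_{-n-2},\cE_{-n-3})$ is nonzero (it is dual to $\Hom(\cE_{-n-3},\cE_{-n-2})$) and does affect exactness. Exactness of the $\Hom(\cE_{-n-2},-)$ sequence actually requires the coevaluation to induce an isomorphism $\Ext^1(\cE_{-n-2},\cE_{-n-3})\to\Ext^1(\cE_{-n-2},\cE_{-n-2})^{\oplus\Delta_{-n-2}}$, which comes from the non-degenerate pairing.
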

\begin{proof}
We just compare coefficients of $t^i$ on both sides. If $i >3$, then all the $\chi$ terms in (\ref{eq:hilbertRecurRelation}) are given by dimensions of Hom spaces and hence, coefficients of the relative Hilbert series on the left hand side. Thus (\ref{eq:hilbertRecurRelation}) immediately shows the power series on the left is a polynomial of degree $\leq 3$. The constant term on the left is also clearly 1 whilst the coefficient of $t$ is 
$$\dim (\cE_{-n-1},\cE_{-n}) - \Delta_{-n} \dim(\cE_{-n-1},\cE_{-n-1}) = 0.$$
For the coefficients of $t^2, t^3$, it is easier to ``correct'' (\ref{eq:hilbertRecurRelation}) with Ext computations. The co-efficient of $t^2$ is by Serre duality 
$$\Delta_{-n-2} \dim \Ext^1(\cE_{-n-2},\cE_{-n-2}) - \dim \Ext^1(\cE_{-n-2},\cE_{-n-3}) = 0$$
and that of $t^3$ is $-\dim \Ext^1(\cE_{-n-3},\cE_{-n-3}) = -1$. 
\end{proof}

In the ``equigenerated'' elliptic helices of \cite[Section~8]{morph} there was a single Hilbert series whereas here, we have three as per the following. 
\begin{proposition}  \label{prop:hilbertSeries3periodic}
We have $H_{\cE,n+3}(t) = H_{\cE,n}(t)$.
\end{proposition}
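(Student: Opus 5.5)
The plan is to reduce the statement to a shift-invariance of the Euler form along the helix. Concretely, I will prove that
$$\chi(\cE_{a+3},\cE_{b+3}) = \chi(\cE_a,\cE_b) \quad \text{for all } a,b\in\mathbb{Z}. \qquad (\ast)$$
Granting $(\ast)$, I compare $H_{\cE,n}$ and $H_{\cE,n+3}$ coefficient by coefficient.
\begin{itemize}
\item For $i<0$ both coefficients vanish, since $\End\underline{\cE}$ is positively indexed: $\Hom(\cE_j,\cE_l)=0$ when $j>l$ because the $\cE_j$ are stable of increasing slope (Remark~\ref{rem:ellipticHelix}).
\item For $i=0$ both coefficients equal $1$, since the $\cE_j$ are simple.
\item For $i>0$ the higher Ext groups vanish by Definition~\ref{def:ellipticHelix}. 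Hence $\dim\Hom(\cE_{-n-i},\cE_{-n}) = \chi(\cE_{-n-i},\cE_{-n})$, and $(\ast)$ with $a=-n-3-i$, $b=-n-3$ identifies this with the coefficient of $t^i$ in $H_{\cE,n+3}$.
\end{itemize}

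To prove $(\ast)$, I induct on $d=b-a$. Since $\chi$ is antisymmetric by the determinant formula (\ref{eq:eulerFormOnX}), it suffices to treat $d\geq 0$, and antisymmetry then covers negative $d$. The base cases are as follows.
\begin{itemize}
\item $d=0$: $\chi(E,E)=0$ by (\ref{eq:eulerFormOnX}).
\item $d=1$: this is $\Delta_{a+1}=\Delta_{a+4}$, which is Proposition~\ref{prop.3period}.
\item $d=2$: the first formula in (\ref{eq.basicMutateFormulae}), applied to the seed $(\cE_a,\cE_{a+1},\cE_{a+2})$, gives $\chi(\cE_a,\cE_{a+2}) = \Delta_{a+1}\Delta_{a+2}-\Delta_{a+3}$. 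The right-hand side is invariant under $a\mapsto a+3$ by Proposition~\ref{prop.3period}.
\end{itemize}
The mutation formulas apply to any three consecutive terms, since every consecutive triple of the helix generates it by the three-helical condition.

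For the inductive step with $d\geq 3$, I apply $\chi(\cE_a,-)$ to the exact sequence (\ref{eq:helixExactSeqNumerical}), equivalently use (\ref{eq:hilbertRecurRelation}) with $-n=b$. This gives
$$\chi(\cE_a,\cE_b) = \Delta_b\,\chi(\cE_a,\cE_{b-1}) - \Delta_{b-2}\,\chi(\cE_a,\cE_{b-2}) + \chi(\cE_a,\cE_{b-3}).$$
The coefficients $\Delta_b,\Delta_{b-2}$ are $3$-periodic in $b$ by Proposition~\ref{prop.3period}. The three $\chi$-terms on the right have differences $d-1,d-2,d-3$, all in $[0,d)$, so by strong induction they are invariant under the shift by $3$. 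Therefore $\chi(\cE_a,\cE_b)$ is invariant as well, which proves $(\ast)$.

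I do not expect a serious obstacle here; the main care needed is bookkeeping. First, the $\Delta$-indices in (\ref{eq:helixExactSeqNumerical}) must match Definition~\ref{def.helicalCOnd}, in particular the use of property (3) to identify ${}^*\Hom(R_{\cE_{i+1}}\cE_i,\cE_{i+2})$ with a space of dimension $\Delta_{i+3}$. Second, the $d=2$ base case must be derived for an arbitrary starting index, not just for $a=0$. An alternative route would use Lemma~\ref{lem:hilbertRecurRelation} together with uniqueness of power-series solutions, but the recurrence there mixes different $n$, so the direct Euler-form induction is cleaner.
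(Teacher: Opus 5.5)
Your proof is correct and is essentially the paper's argument. The paper also proves periodicity simultaneously for all $n$ by induction on the degree of the coefficient (your $d=b-a$), using the recurrence from the helix exact sequence, packaged as Lemma~\ref{lem:hilbertRecurRelation}, together with the $3$-periodicity of the $\Delta_i$. Working with $\chi$ instead of $\dim\Hom$ simply spares you the low-degree Ext corrections in that lemma, and the ``mixing of $n$'' you worried about is handled there exactly as you handle the mixing of $b$ in your own recurrence.
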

\begin{proof}
We prove 3-periodicity simultaneously for all $n$ by induction on the degree of the coefficients. The coefficients of negative degree are all zero. For non-negative degrees, Lemma~\ref{lem:hilbertRecurRelation} expresses any term of $H_{\cE,n}(t)$ in terms of the $\Delta_n$, which are 3-periodic in $n$ by Proposition~\ref{prop.3period}, and lower order terms in other $H_{\cE,m}(t)$ which, by induction are also 3-periodic. 
\end{proof}

\begin{theorem}  \label{thm:hilbertSeries}
Let 
\begin{equation}  \label{eq:denominatorHilbert}
D(t) := 
\begin{pmatrix}
1 - t^3 & - \Delta_3 t & \Delta_1 t^2 \\
\Delta_3 t^2 & 1 - t^3 & - \Delta_2 t \\
- \Delta_1 t & \Delta_2 t^2 & 1 - t^3
\end{pmatrix}.
\end{equation}
Then 
\begin{equation}  \label{eq:hilbertSeries}
\begin{pmatrix}
    H_{\mathbb{S},0}(t) \\ H_{\mathbb{S},1}(t) \\ H_{\mathbb{S},2}(t) 
\end{pmatrix}
 = 
 \frac{1}{D(t)}
 \begin{pmatrix}
     1 \\ 1 \\ 1
 \end{pmatrix}
\ \text{ and } \ 
\begin{pmatrix}
    H_{\cE,0}(t) \\ H_{\cE,1}(t) \\ H_{\cE,2}(t) 
\end{pmatrix}
 = 
 \frac{1-t^3}{D(t)}
 \begin{pmatrix}
     1 \\ 1 \\ 1
 \end{pmatrix}.
\end{equation}
Furthermore, $H_{\mathbb{S},n+3}(t) = H_{\mathbb{S},n}(t)$ for all $n$.
\end{theorem}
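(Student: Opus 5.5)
The plan is to turn Lemma~\ref{lem:hilbertRecurRelation} together with the 3-periodicity of Proposition~\ref{prop:hilbertSeries3periodic} and Proposition~\ref{prop.3period} into a $3\times 3$ linear system for $H_{\cE,0},H_{\cE,1},H_{\cE,2}$, and then treat $\mathbb{S}$ by the same method using its Koszul resolution.

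For $\End\underline{\cE}$, apply Lemma~\ref{lem:hilbertRecurRelation} for $n=0,1,2$ and use $H_{\cE,n+3}=H_{\cE,n}$. The case $n=0$ reads
$$(1-t^3)H_{\cE,0}-\Delta_{0}tH_{\cE,1}+\Delta_{-2}t^2H_{\cE,2}=1-t^3.$$
Periodicity gives $\Delta_0=\Delta_3$ and $\Delta_{-2}=\Delta_1$, so this is the first row of $D(t)$. The cases $n=1,2$ involve $\Delta_{-1}=\Delta_2$, $\Delta_{-3}=\Delta_3$, $\Delta_{-2}=\Delta_1$ and $\Delta_{-4}=\Delta_2$. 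I will check that the indices and signs match the matrix in (\ref{eq:denominatorHilbert}), in particular that row 2 reads $\Delta_3t^2H_{\cE,0}+(1-t^3)H_{\cE,1}-\Delta_2tH_{\cE,2}$ after cyclic reindexing. Index bookkeeping is the main place an error could creep in. Since $\det D(0)=1$, the matrix $D(t)$ is invertible over $k[[t]]$, and the formula for $H_{\cE,\bullet}$ follows.

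For $\mathbb{S}=\mathbb{S}^{nc}(\underline{\cE})$, I will use Theorem~\ref{thm:BPBisKoszul}. Because $\mathbb{S}$ is Koszul and AS-regular of dimension three with Gorenstein parameter three, each simple module $S_n$ has a minimal resolution of length three,
$$0\to P_{n+3}\to P_{n+2}^{\oplus a}\to P_{n+1}^{\oplus b}\to P_n\to S_n\to 0.$$
Here $b=\dim\mathbb{S}_{n,n+1}=\dim\Hom(\cE_{-n-1},\cE_{-n})=\Delta_{-n}$. The number $a$ is the dimension of the space of quadratic relations in $e_n\mathbb{S}e_{n+2}$. That space agrees with the relations of $\End\underline{\cE}$ in degree two, because the surjection $\mathbb{S}\to\End\underline{\cE}$ is an isomorphism in degrees $\le 2$. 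By (\ref{eq:helixExactSeqNumerical}) this dimension is $\Delta_{-n-2}$. Alternatively, AS-Gorenstein duality identifies the terms of the resolution with the dual of the degree-one part, again giving $\Delta_{-n-2}$. I expect justifying $a=\Delta_{-n-2}$ with correct indices, and the fact that the last term is a single $P_{n+3}$, to be the main obstacle. I would deduce it from the Koszul dual being Frobenius of length three, as in \cite{bp} and \cite[Theorem 5.4]{morph}.

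Taking Hilbert series of this exact sequence gives
$$H_{\mathbb{S},n}-\Delta_{-n}tH_{\mathbb{S},n+1}+\Delta_{-n-2}t^2H_{\mathbb{S},n+2}-t^3H_{\mathbb{S},n+3}=1,$$
which is the same recursion with right-hand side $1$ in place of $1-t^3$. Three-periodicity of $H_{\mathbb{S},n}$ then follows by the same induction on coefficient degree as in Proposition~\ref{prop:hilbertSeries3periodic}. Substituting periodicity into the recursion gives the system $D(t)\,\vec H_{\mathbb{S}}=(1,1,1)^T$, and inverting $D(t)$ over $k[[t]]$ completes the proof.
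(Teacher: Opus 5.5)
Your proposal is correct and is essentially the paper's proof. The $\End\underline{\cE}$ formula is Lemma~\ref{lem:hilbertRecurRelation} for $n=0,1,2$ combined with 3-periodicity. The $\mathbb{S}$ formula comes from the length-three Koszul resolution of the simple modules guaranteed by Theorem~\ref{thm:BPBisKoszul}, which gives the same recursion with right-hand side $1$; your multiplicities ($\Delta_{-n}$ on $P_{n+1}$, $\Delta_{-n-2}$ on $P_{n+2}$) and your row-by-row index check of $D(t)$ are correct and make that recursion come out exactly as stated.
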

\begin{proof}
The Hilbert series for $\End({\underline{\cE}})$ is just a re-statement of Lemma~\ref{lem:hilbertRecurRelation} taking into account the 3-periodicity of Proposition~\ref{prop:hilbertSeries3periodic}. To obtain the Hilbert series for $\mathbb{S}^{nc}(\underline{\cE})$ we use the Koszul resolution guaranteed by Bondal-Polshchuk's Theorem~\ref{thm:BPBisKoszul}. Abbreviating $\mathbb{S}$ for $\mathbb{S}^{nc}(\underline{\cE)}$, this is  
\begin{equation}
    0 \rightarrow e_n\mathbb{S} \rightarrow e_{n+1}\mathbb{S}^{\oplus \Delta_{-n-2}} \rightarrow e_{n+2}\mathbb{S}^{\oplus \Delta_{-n}} \rightarrow e_{n+3}\mathbb{S} \rightarrow \mathbb{S}_{n+3,n+3} \rightarrow 0.
\end{equation}
Taking alternating sums of dimensions in degree $n+j$ gives the following analogue of Lemma~\ref{lem:hilbertRecurRelation}:
\begin{equation*}
H_{\mathbb{S},n}(t) - \Delta_{-n} t H_{\mathbb{S},n+1}(t) +\Delta_{-n-2} t^2 H_{\mathbb{S},n+2}(t)- t^3 H_{\mathbb{S},n+3}(t) = 1.
\end{equation*}
Now 3-periodicity of $H_{\mathbb{S},n}(t)$ follows as before as does the formula for the Hilbert series. 
\end{proof}

We do not know if in general, noetherian $\mathbb{Z}$-algebras have ``sub-exponential'' growth as is the case for graded algebras by \cite{StephensonZhang}. However, in our case we do have the following. 

\begin{proposition}  \label{prop:noetherianIsMarkov}
Let $\underline{\cE}$ be such that $\End (\underline{\cE})$ is right noetherian. Then $\underline{\cE}$ has polynomial growth and we are in the Markov case. 
\end{proposition}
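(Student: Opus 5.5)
By Proposition~\ref{prop.whyMarkov}, an elliptic helix either has exponential growth or is in the Markov case, and the Markov case has polynomial growth. So the statement reduces to one claim: if $\End(\underline{\cE})$ is right noetherian, then $\underline{\cE}$ cannot have exponential growth. My plan is to adapt the Stephenson--Zhang argument that right noetherian connected graded algebras have subexponential growth \cite{StephensonZhang} to the $\mathbb{Z}$-algebra $B=\End(\underline{\cE})$. What makes this adaptation feasible here is the $3$-periodicity of the Hilbert series (Proposition~\ref{prop:hilbertSeries3periodic}), together with the fact that $B$ is generated in degree one (Theorem~\ref{thm:BPBisKoszul}).

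\textbf{Step 1: reduce to finitely many projectives.} Right noetherianity gives that each $e_nB$ is noetherian. By $3$-periodicity every Hilbert function $H_{\cE,n}$ equals one of $H_{\cE,0},H_{\cE,1},H_{\cE,2}$. Hence any growth estimate proved for $e_0B,e_1B,e_2B$ with constants depending only on the Hilbert data holds uniformly in $n$.

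\textbf{Step 2: the Stephenson--Zhang construction.} Assume for contradiction that $\overline{\lim}\,\chi(\cE_i,\cE_{i+n})^{1/n}>1$. Inside $e_iB$, build right ideals degree by degree, at each stage choosing homogeneous elements in $e_iB_{i,i+m}$ not lying in the right ideal generated by the previously chosen lower-degree elements. The goal is an infinite strictly ascending chain $I_1\subsetneq I_2\subsetneq\cdots$, contradicting noetherianity. The counting input is the following. Suppose $J\subseteq e_iB$ is generated in degrees $\le N$. Then the dimension of $J$ in degree $m$ is at most
\[
\sum_{l\le N}\dim J_{i,i+l}\cdot\dim B_{i+l,i+m},
\]
which by Step 1 is controlled by the three series $H_{\cE,0},H_{\cE,1},H_{\cE,2}$. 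Since these are rational with denominator $\det D(t)$ (Theorem~\ref{thm:hilbertSeries}), their coefficients are governed by the eigenvalues $\lambda_\pm$ via \eqref{eq.degRkFormula}. With $\lambda_+>1$, I will compare the exponential rate $\lambda_+^{n/3}$ of $\dim B_{i,i+n}$ against the growth of ideals generated in bounded degree. As in \cite{StephensonZhang}, the aim is to show that a single left-multiplied generator cannot account for all of $e_iB$ in high degree. Using generation in degree one, one then exhibits new elements outside any finitely generated right ideal.

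\textbf{Step 3: conclude.} The contradiction rules out exponential growth. Proposition~\ref{prop.whyMarkov} then forces the Markov case $\Delta_1^2+\Delta_2^2+\Delta_3^2=\Delta_1\Delta_2\Delta_3$ and polynomial (indeed linear) growth.

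\textbf{Main obstacle.} Step 2 is the hard part. Stephenson--Zhang use the graded-algebra structure, in particular multiplication $A_m\otimes A_n\to A_{m+n}$ on a single graded piece, whereas here $B_{i,j}\otimes B_{j,l}\to B_{i,l}$ mixes three distinct rows. I would try first to pass to a $3$-Veronese-type object $\bigoplus_n e_0Be_{3n}$, regarded as a module over the $\mathbb{Z}$-algebra $\bigoplus B_{3a,3b}$. I would redo their counting lemma there with uniform constants from Step 1, checking at each point that only dimension counts, and no genuine commutativity of degrees, are used.

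If that adaptation fails, the fallback is geometric. Since $\lambda_+>1$, the slopes $\mu(\cE_{-j})$ converge to a quadratic irrational limit. I would then try to show that the right ideal $\Gamma(\cE_0(-p))\cap e_0B$, for a point $p$, is not finitely generated, by comparing its Hilbert series with the rational form forced by finite generation.
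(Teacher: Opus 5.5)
Your outer structure matches the paper's proof: reduce via Proposition~\ref{prop.whyMarkov} to excluding exponential growth, then adapt Stephenson--Zhang using $3$-periodicity. The gap is that Step~2 carries all the content, and it is never carried out. The heuristics you give for it would not close it.

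Comparing ``the exponential rate'' of $\dim B_{i,i+n}$ with the growth of ideals generated in bounded degree cannot succeed. Write $d_n:=\dim_k e_0Be_{3n}$. A single $x\in B_{0,3l}$ already has $\dim_k xBe_{3m}$ as large as $d_{m-l}$, which grows at the same rate $\lambda_+^{m}$; only the constant shrinks. Similarly, ``new elements outside any finitely generated right ideal'' is false as stated: $e_0B_{\geq 1}$ is finitely generated, precisely because of degree-one generation, which plays no role here. A construction that adds new elements in every degree stalls as soon as the ideal becomes cofinite.

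The missing idea is sparsity of the chosen degrees. You need $l_1<l_2<\cdots$ with
\[ d_{l_s}>\sum_{i<s} d_{l_s-l_i} \quad\text{for all } s, \]
which is \cite[Lemma~1.1.3]{StephensonZhang}. Here it is even elementary. By \eqref{eq:chiEE} and $3$-periodicity, $d_n=\chi(\cE_0,\cE_{3n})=C\lambda_+^n+O(1)$ for a constant $C>0$. So it suffices to take the $l_i$ growing fast enough that $\sum_i\lambda_+^{-l_i}\le\frac12$ and the $O(1)$ errors are absorbed.

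Once you have this, the ``main obstacle'' disappears, and your $3$-Veronese idea is exactly the paper's argument. Choose inductively a single $x_s\in B_{0,3l_s}$ with $x_s\notin x_1B+\cdots+x_{s-1}B$. Such an element exists because
\[ \dim_k x_iBe_{3l_s}\le\dim_k e_{3l_i}Be_{3l_s}=d_{l_s-l_i} \]
by Proposition~\ref{prop:hilbertSeries3periodic}. The second relation is an exact equality of dimensions, so no uniform constants or mixing of rows need to be controlled. The strictly ascending chain $x_1B\subsetneq x_1B+x_2B\subsetneq\cdots$ in $e_0B$ then contradicts right noetherianity. The geometric fallback via slopes and saturated ideals is unnecessary.
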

\begin{proof}
By Proposition~\ref{prop.whyMarkov}, it suffices to assume $\underline{\cE}$ has exponential growth and construct a non-finitely generated submodule of $e_0 B$ where $B:=\End (\underline{\cE})$. We can use Stephenson-Zhang's argument \cite[Theorem~1.2]{StephensonZhang} as follows. From (\ref{eq:chiEE}) in the proof of Proposition~\ref{prop.whyMarkov}, we see that $d_n:= \dim e_0 B e_{3n}$ grows exponentially so by \cite[Lemma~1.1.3]{StephensonZhang}, there is a sequence $l_1<l_2 < \ldots$ such that 
\begin{equation}  \label{eq:SZdSequence}
d_{l_s} >\sum_{i<s} d_{l_s-l_i}    
\end{equation}
for all $s\in \mathbb{N}$. We show that one can find, inductively on $s$, elements $x_s \in B_{0,3l_s}$ such that $x_s \notin x_1B + \ldots + x_{s-1} B$. By the 3-periodicity of Hilbert series in Proposition~\ref{prop:hilbertSeries3periodic}, we know that $\dim_k x_i B e_{3l_s} \leq \dim_k e_{3l_i}Be_{3l_s} = d_{l_s-l_i}$. Thus our desired $x_s$ exists by (\ref{eq:SZdSequence}). 
\end{proof}

As in Artin-Tate-van den Bergh's classic study \cite{atv} of regular graded algebras of dimension three, the relationship between the Hilbert series is a manifestation of the fact that we have an embedding of a noncommutative elliptic curve as a cubic in a noncommutative projective plane. 
\begin{corollary}  \label{cor:cubicDivisor}
Let $\underline{\mathcal{E}}$ be an elliptic helix and let $\pi: \mathbb{S}^{nc}(\underline{\mathcal{E}}) \rightarrow \operatorname{End }(\underline{\mathcal{E}})$ denote the canonical surjection.  Then $\operatorname{ker }\pi$ is generated by a normal family of regular elements of degree three.
\end{corollary}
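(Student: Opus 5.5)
The plan is to compare the two Hilbert series of Theorem~\ref{thm:hilbertSeries} and then produce, in each $e_n\mathbb{S}$, a degree three element spanning the kernel in degree three. Write $\mathbb{S}=\mathbb{S}^{nc}(\underline{\cE})$ and $K=\ker\pi$. By Theorem~\ref{thm:hilbertSeries},
$$H_{\cE,n}(t)=(1-t^3)H_{\mathbb{S},n}(t)$$
for all $n$, after comparing the two vector formulas and using 3-periodicity. Since $\pi$ is surjective by Theorem~\ref{thm:BPBisKoszul}(2), the Hilbert series of $e_nK$ is $t^3H_{\mathbb{S},n+3}(t)$. In particular $\dim e_nKe_{n+i}=0$ for $i<3$ and $\dim e_nKe_{n+3}=1$. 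So for each $n$ we pick a nonzero $x_n\in e_n\mathbb{S}e_{n+3}$ spanning $K_{n,n+3}$.

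Next I would show that the family $x=\{x_n\}$ consists of regular elements and generates $K$. Right multiplication by $x_{n+3}$ is a map $e_{n+3}\mathbb{S}\to e_n\mathbb{S}$ of degree three, whose image lands in $e_nK$. To show it is injective I would use that $\mathbb{S}$ is AS-regular of global dimension three (Theorem~\ref{thm:BPBisKoszul}(1)), hence a domain in the appropriate $\mathbb{Z}$-algebra sense. Concretely, I would argue as in \cite{atv} and \cite[Section~8]{morph}: $\mathbb{S}$ embeds in a noncommutative plane, and a zero divisor would produce a torsion submodule. The fallback is to verify directly that left and right multiplication by $x$ is injective using the Koszul resolution.

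Given injectivity, $x_n\mathbb{S}\subseteq e_nK$ has Hilbert series $t^3H_{\mathbb{S},n+3}$, which is exactly the Hilbert series of $e_nK$. Therefore $e_nK=x_n\mathbb{S}$. The symmetric left-module argument gives $e_nK=\mathbb{S}x_{n-3}$ in the appropriate indexing. Since $x_n\mathbb{S}=K e_{\ast}=\mathbb{S}x_{\ast}$, normality follows: for $s\in\mathbb{S}_{m,n}$, the element $s x_n$ lies in $K$ and hence in $x_m\mathbb{S}$, and the converse inclusion holds similarly. So $K$ is generated by the normal family $x$.

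The main obstacle is the regularity (injectivity) of multiplication by $x$. Without it, the dimension count only gives $\dim x_n\mathbb{S}\le\dim e_nK$.

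The first thing I would try avoids needing $\mathbb{S}$ to be a domain. Use the helix sequence (\ref{eq:helixExactSeqNumerical}): the Koszul complex of $\mathbb{S}$ maps onto the complex of $\End\underline{\cE}$-modules obtained by applying $\Hom(\underline{\cE},-)$ to that sequence. The failure of exactness there, a one-dimensional $\Ext^1$ contribution at the end (the $-t^3$ term in Lemma~\ref{lem:hilbertRecurRelation}), is exactly a shifted copy of $\End\underline{\cE}$. A snake-lemma comparison should identify $K$ with a shift of $\mathbb{S}$ via multiplication by $x$, giving both generation and regularity at once.

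If that fails, I would instead quote \cite[Theorem~1.2(4)]{morph}'s argument, which the authors say generalises: its proof is local in the index $n$ and uses only the Hilbert series identity above.
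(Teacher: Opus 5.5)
Your outline, together with your last fallback, is exactly the paper's proof. The paper simply notes that Theorem~\ref{thm:hilbertSeries} gives $H_{\cE,n}(t)=(1-t^3)H_{\mathbb{S},n}(t)$ for every $n$, that this identity was the only use of equigeneration in the proof of \cite[Theorem~8.7]{morph}, and that the proof therefore carries over verbatim. Your bookkeeping is also fine: $\dim e_nKe_{n+3}=1$, and two-sided regularity plus the dimension count give $e_nK=x_n\mathbb{S}$ and $Ke_{n+3}=\mathbb{S}x_n$, hence normality. There are two index slips. The map $e_{n+3}\mathbb{S}\to e_n\mathbb{S}$ is left multiplication by $x_n$, not right multiplication by $x_{n+3}$. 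The left-module statement should read $Ke_n=\mathbb{S}x_{n-3}$.

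The self-contained routes you offer for regularity, however, do not work as written.

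\emph{The domain route.} No theorem is available saying an AS-regular $\mathbb{Z}$-algebra of dimension three is a domain. Moreover, in the graded setting of \cite{atv} the domain property is itself deduced from the regular normal cubic element, so that appeal is circular.

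\emph{The Koszul comparison.} This is the right idea, but the homology is misidentified. Let $S_n$ be the simple right module in degree $(n,n)$ and $B=\End\underline{\cE}$. Applying $\bigoplus_m\Hom(\cE_{-m},-)$ to \eqref{eq:helixExactSeqNumerical} gives the Koszul resolution of $S_n$ tensored over $\mathbb{S}$ with $B$. The $\Ext$ long exact sequences (using $\Ext^1(\cE_i,\cE_j)=0$ for $i<j$ and Serre duality) show that its homology consists of two pieces:
\begin{itemize}
\item $S_n$ at the $e_nB$ term;
\item a single one-dimensional piece in degree $n+3$ at the $e_{n+1}B^{\oplus\Delta_{-n}}$ term, coming from $\Ext^1(\cE_{-n-3},\cE_{-n-3})=k$.
\end{itemize}
It is not a shifted copy of $B$.

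Hence $\operatorname{Tor}^{\mathbb{S}}_i(S_n,B)=0$ for $i\ge 2$, while $\operatorname{Tor}^{\mathbb{S}}_1(S_n,B)$ is one-dimensional in degree $n+3$. The long exact sequence for $0\to K\to\mathbb{S}\to B\to 0$ then shows that $K$ is a free left $\mathbb{S}$-module on one generator $x_n\in e_nKe_{n+3}$ for each $n$. That is, $K=\mathbb{S}x$ with $s\mapsto sx_n$ injective.

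The mirror argument, using $\Hom(-,\underline{\cE})$ and left Koszul resolutions, gives $K=x'\mathbb{S}$ with left multiplication by each $x'_n$ injective. Since $e_nKe_{n+3}$ is one-dimensional, $x'_n$ and $x_n$ agree up to scalars. This yields regularity, generation and normality at once, with no domain hypothesis and without the dimension count.
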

\begin{proof}
This is a generalisation of \cite[Theorem~8.7]{morph} to the not necessarily equigenerated case. In fact, the proof there carries over verbatim once we realise that the equigeneration hypothesis was only used to ensure $H_{\cE,n}(t) = (1-t^3) H_{\mathbb{S},n}(t)$. This latter is just Theorem~\ref{thm:hilbertSeries}. 
\end{proof}

\section{Constructing seeds with given $\Delta_i$'s: the Markov case}  \label{sec:seedsMarkovCase}

We wish to construct all elliptic helices $\underline{\cE}$ on an elliptic curve $X$ with polynomial growth. Towards this end, in this section, we construct seeds with given invariants $\Delta_i$, in the Markov case. 

We will utilize numerous properties of Markov triples, which we briefly review now.  We will often use the fact that Markov triples are relatively prime \cite[Corollary 3.4]{hundred}.  Another result that will be useful to us is that Markov triples can be {\it mutated} to form new triples \cite[Section 3.1]{hundred}:  if $(m_{1}, m_{2}, m_{3})$ is a Markov triple, then so are
\begin{equation} \label{eqn.markovtrip2}
(3m_{2}m_{3}-m_{1}, m_{2}, m_{3}), (m_{1}, 3m_{1}m_{3}-m_{2}, m_{3}), (m_{1}, m_{2}, 3m_{1}m_{2}-m_{3}).
\end{equation}

\begin{lemma} \label{lemma.three}
If $m$ is a Markov number (i.e. a component of a Markov triple), then $m$ isn't divisible by three.
\end{lemma}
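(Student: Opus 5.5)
The plan is to work modulo $3$. Suppose $(m_1,m_2,m_3)$ is a Markov triple, so that
$$m_1^2+m_2^2+m_3^2 = 3m_1m_2m_3 \equiv 0 \pmod 3.$$
Every square is congruent to $0$ or $1$ modulo $3$, according as $3$ divides the base or not. Hence the left-hand side is congruent to the number of $m_i$ not divisible by $3$.

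That number must therefore lie in $\{0,3\}$, since it is between $0$ and $3$ and divisible by $3$. So either all three $m_i$ are prime to $3$, or all three are divisible by $3$.

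It remains to exclude the second case. For this I will invoke the fact recalled above that Markov triples are pairwise relatively prime \cite[Corollary 3.4]{hundred}. If $3$ divided every $m_i$, then $3$ would be a common divisor of $m_1$ and $m_2$, contradicting this. Hence no component is divisible by $3$, and the lemma follows since every Markov number occurs as a component of some triple.

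If one prefers not to lean on the coprimality result, a self-contained alternative uses descent. Assume $m_i=3n_i$ for all $i$. Substituting gives $n_1^2+n_2^2+n_3^2 = 9n_1n_2n_3$. Repeating the mod-$3$ argument, and using that $9n_1n_2n_3\equiv 0 \pmod 3$, shows every $n_i$ is either all divisible by $3$ or all prime to $3$. I expect this descent to be the only mildly delicate point. The simplest route is just the coprimality citation, and that is the one I will use.
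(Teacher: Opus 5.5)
Your proof is correct, but it takes a different route from the paper.

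The paper uses Cassels' result that every Markov triple is reached from $(1,1,1)$ by Markov mutations. It then observes that, modulo $3$, a mutation $m_1 \mapsto 3m_2m_3-m_1$ only negates one coordinate, so every component stays $\equiv \pm 1 \pmod 3$.

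You instead reduce the Markov equation itself modulo $3$, which gives the dichotomy ``all components prime to $3$, or all divisible by $3$.'' You then rule out the second case using coprimality of Markov triples. The paper also quotes and uses that coprimality result, so the citation is legitimate. Even the weaker statement $\gcd(m_1,m_2,m_3)=1$ would suffice.

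Your route is more local. It applies verbatim to any integer solution of $x^2+y^2+z^2=3xyz$ with coprime entries, signs included. The paper's route is self-contained given the mutation tree, which it needs anyway for Corollary~\ref{cor:MarkovIsMutateLines}. Coprimality is itself usually proved by the same reduction to $(1,1,1)$, since the gcd of a triple is mutation-invariant. So the two arguments ultimately rest on the same fact.

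One caution about your backup descent: it does not close as written. Reducing $n_1^2+n_2^2+n_3^2=9n_1n_2n_3$ modulo $3$ leaves the branch where every $n_i$ is prime to $3$ completely open. Ruling that equation out needs a Vieta-jumping argument, namely Hurwitz's result that $x^2+y^2+z^2=axyz$ has positive solutions only for $a=1,3$. Since you explicitly rely on the coprimality route instead, this does not affect your proof.
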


\begin{proof}
We know from \cite[Lemma 5, p. 28]{cassels} that any Markov triple can be obtained from the Markov triple $(1,1,1)$ by a sequence of Markov mutations. However, from the formula (\ref{eqn.markovtrip2}), we see that modulo 3, such a mutation only changes the sign of one of the components. 
\end{proof}

We now fix a triple of positive integers 
$\Delta_1, \Delta_2, \Delta_3$ which satisfies the ``Markov'' condition $\Delta_1^2 + \Delta_2^2 + \Delta_3^2  = \Delta_1\Delta_2\Delta_3$ from (\ref{eqn.markovtrip}). Note then that $\Delta_1,\Delta_2,\Delta_3$ are all multiples of 3 and so dividing by 3 we get one of the usual Markov triples. We now ask the converse question, when and how can we construct a {\em seed} $[E_i] = (d_i,r_i) \in K_0(X) = \mathbb{Z}^2$ for $i=0,1,2$ such that $\chi(E_{i-1},E_i) = \Delta_i$ for $i=1,2,3$ where $E_3$ is the double right mutation of $E_0$ defined in Section~\ref{sec.numbericalSetup}. We also want the seed to generate an elliptic helix $\underline{\cE}$ of bundles on $X$, so in particular, we insist that the ranks $r_i$ of the $E_i$ are positive integers. In this section, we give a complete answer. 

If we write $\vec{d} = \begin{pmatrix}
d_0 \\ d_1 \\ d_2 \end{pmatrix}$, and $\vec{r} = \begin{pmatrix} r_0 \\ r_1 \\ r_2 
\end{pmatrix}$, then (\ref{eq.basicMutateFormulae}) gives the following
\begin{proposition} \label{prop.crossProductCond}
With the above notation, we have $\chi(E_{i-1},E_i) = \Delta_i$ for $i=1,2,3$ if and only if 
\begin{equation} \label{eq.crossProductCondition}
\vec{d} \times \vec{r} = 
-\begin{pmatrix}
 \Delta_2 \\  - \Delta_{02} \\ \Delta_1
\end{pmatrix}    
\end{equation}
where $\Delta_{02} = \Delta_1\Delta_2 - \Delta_3$.
\end{proposition}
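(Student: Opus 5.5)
The plan is to compute the three components of $\vec{d}\times\vec{r}$ directly and recognise each one as an Euler form value via (\ref{eq:eulerFormOnX}). Then we use the mutation formula (\ref{eq.basicMutateFormulae}) to trade the condition on $\chi(E_2,E_3)$ for a condition on $\chi(E_0,E_2)$.

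First, by (\ref{eq:eulerFormOnX}) we have $\chi(E_i,E_j) = d_j r_i - d_i r_j$. Writing out the cross product,
$$\vec{d}\times\vec{r} = \begin{pmatrix} d_1r_2 - d_2r_1 \\ d_2r_0 - d_0r_2 \\ d_0r_1 - d_1r_0\end{pmatrix} = \begin{pmatrix} -\chi(E_1,E_2) \\ \chi(E_0,E_2) \\ -\chi(E_0,E_1)\end{pmatrix}.$$
So (\ref{eq.crossProductCondition}) holds if and only if the following three equalities hold simultaneously:
$$\chi(E_0,E_1)=\Delta_1,\qquad \chi(E_1,E_2)=\Delta_2,\qquad \chi(E_0,E_2)=\Delta_1\Delta_2-\Delta_3.$$

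Second, we compare this with the target conditions $\chi(E_{i-1},E_i)=\Delta_i$ for $i=1,2,3$. The cases $i=1,2$ coincide exactly with the first two equalities above. For $i=3$, recall that $E_3$ is the double right mutation of $E_0$. Formula (\ref{eq.basicMutateFormulae}), namely $\Delta_{23}=\Delta_{01}\Delta_{12}-\Delta_{02}$, then gives
$$\chi(E_2,E_3) = \chi(E_0,E_1)\chi(E_1,E_2) - \chi(E_0,E_2).$$
Assuming the first two equalities, this says $\chi(E_2,E_3)=\Delta_3$ if and only if $\chi(E_0,E_2)=\Delta_1\Delta_2-\Delta_3=\Delta_{02}$. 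Both implications follow.

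There is no genuine obstacle here. The only care needed is with the sign and ordering conventions: the determinant in (\ref{eq:eulerFormOnX}) must be expanded as $\deg E'\rk E - \deg E\rk E'$. Getting this wrong would flip the overall sign in (\ref{eq.crossProductCondition}).
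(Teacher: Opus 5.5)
Your proof is correct and follows the paper's approach. The paper only says that the statement follows from (\ref{eq.basicMutateFormulae}). You supply the omitted details: the components of $\vec{d}\times\vec{r}$ are $\bigl(-\chi(E_1,E_2),\,\chi(E_0,E_2),\,-\chi(E_0,E_1)\bigr)$, with signs checked against (\ref{eq:eulerFormOnX}), and $\Delta_{23}=\Delta_{01}\Delta_{12}-\Delta_{02}$ turns the $i=3$ condition into the condition on the middle coordinate.
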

Note that $\vec{
\Delta} = (\Delta_2,\Delta_3, \Delta_1) \mapsto (\Delta_2, \Delta_1\Delta_2 - \Delta_3,\Delta_1)$ is a type of Markov mutation at the middle coordinate by (\ref{eqn.markovtrip2}). 

Now, let $A$ be the matrix defined in Proposition~\ref{prop.eigenA} so the ranks of the helix components $E_i$ are given by $A^n \vec{r}, n \in \mathbb{Z}$. 

\begin{proposition} \label{prop:risdelta}
Suppose that the seed given by $\vec{d}, \vec{r}$ satisfies Equation~(\ref{eq.crossProductCondition}) and $A^n \vec{r}$ consists of positive integers for all $n$. Then 
\begin{equation} \label{eq.rMarkov}
\vec{\Delta}:=
\begin{pmatrix} \Delta_{2} \\ \Delta_{3} \\ \Delta_{1}
\end{pmatrix} = 3 \vec{r}  \ \ \text{or } \vec{r}
\end{equation}
In the former case, $\vec{r}$ is a Markov triple, that is, 
\begin{equation}  \label{eq.Markov}
r_0^2 + r_1^2 + r_2^2 = 3 r_0r_1r_2,   
\end{equation}
Furthermore, we have
\begin{equation}  \label{eq:AminusId}
A^n \vec{d} = \vec{d} + 3n \vec{\Delta}.
\end{equation}
\end{proposition}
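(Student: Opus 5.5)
The plan is to put $A$ in Jordan form and read everything off from the left kernel vector $\vec{\nabla}$ of Lemma~\ref{lem.AminusI}. In the Markov case Proposition~\ref{prop.MarkovTypeCondition} says all eigenvalues of $A$ equal $1$. Since $\Delta_1\neq 0$, Lemma~\ref{lem.AminusI}(2) gives $\rk(A-I)=2$. So $A$ is a single unipotent $3\times 3$ Jordan block, and I will pick a Jordan chain $e_1,e_2,e_3$ with
\[
(A-I)e_3=e_2,\qquad (A-I)e_2=e_1,\qquad (A-I)e_1=0 .
\]
By Proposition~\ref{prop.eigenA} we may take $e_1=\vec{\Delta}$. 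Lemma~\ref{lem.AminusI}(1) gives $\vec{\nabla}^T(A-I)=0$, so $\operatorname{im}(A-I)\subseteq \vec{\nabla}^{\perp}$. Comparing dimensions, $\operatorname{im}(A-I)=\vec{\nabla}^{\perp}=\operatorname{span}(e_1,e_2)$.

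Next I use the cross product condition (\ref{eq.crossProductCondition}). Its right-hand side is exactly $-\vec{\nabla}$, because $\vec{\nabla}=(\Delta_2,\Delta_3-\Delta_1\Delta_2,\Delta_1)^T=(\Delta_2,-\Delta_{02},\Delta_1)^T$. Hence both $\vec{d}$ and $\vec{r}$ are orthogonal to $\vec{\nabla}$, so both lie in $\operatorname{span}(e_1,e_2)$. Write $\vec{r}=ae_1+be_2$; then $A^n\vec{r}=\vec{r}+nb\vec{\Delta}$. The vector $\vec{\Delta}$ has strictly positive entries, so if $b\neq 0$ some coordinate of $A^n\vec r$ becomes negative for $n\to+\infty$ or $n\to-\infty$. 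The positivity hypothesis therefore forces $b=0$, i.e.\ $\vec{r}=a\vec{\Delta}$ with $a>0$ rational.

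To pin down $a$, I use that $\vec{\Delta}=3\vec{m}$ with $\vec{m}$ a Markov triple, and Markov triples are coprime \cite[Corollary 3.4]{hundred}.
\begin{itemize}
\item Since $\vec r=3a\vec m$ is integral and $\gcd(m_i)=1$, Bezout gives $k:=3a\in\mathbb{Z}_{>0}$, so $\vec{r}=k\vec{m}$.
\item Then $\vec{d}\times\vec{r}=k(\vec{d}\times\vec{m})$, so $k$ divides every entry of $\vec{\nabla}$.
\item The entries of $\vec{\nabla}$ are $3$ times the entries of the triple obtained from $\vec m$ by the middle Markov mutation (\ref{eqn.markovtrip2}), since $\Delta_3-\Delta_1\Delta_2=3(m_3-3m_1m_2)$ up to labelling. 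That mutated triple is again Markov, hence coprime, so $\gcd(\vec{\nabla})=3$.
\end{itemize}
Thus $k\in\{1,3\}$, i.e.\ $\vec{\Delta}=3\vec{r}$ or $\vec{\Delta}=\vec{r}$. In the first case $\vec r=\vec m$ is Markov, which is (\ref{eq.Markov}).

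For (\ref{eq:AminusId}), write $\vec{d}=\alpha e_1+\beta e_2$ (no $e_3$ component, as shown above), so $A^n\vec{d}=\vec{d}+n\beta\vec{\Delta}$ and only $\beta$ remains to be found.
\begin{itemize}
\item From the first row of (\ref{eq:Amatrix}), the first coordinate of $(A-I)\vec{d}$ is $-\Delta_1d_1+\Delta_3d_2$, and this equals $\beta\Delta_2$.
\item Substitute $\Delta_1=cr_2$ and $\Delta_3=cr_1$, where $c=\vec\Delta/\vec r\in\{3,1\}$. The first component of (\ref{eq.crossProductCondition}) reads $d_1r_2-d_2r_1=-\Delta_2$.
\item Combining these gives $\beta\Delta_2=c\,\Delta_2$, so $\beta=c$, which is $3$ in the Markov-normalised case $\vec\Delta=3\vec r$.
\end{itemize}

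The step I expect to be the main obstacle is the divisibility argument excluding $k\neq 1,3$. It needs the exact identification of $\vec{\nabla}$ with $3$ times a Markov-mutated triple, together with coprimality of Markov triples. The eigen-analysis itself is routine once Lemma~\ref{lem.AminusI} is in hand.
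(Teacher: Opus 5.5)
Your proof is correct. For the dichotomy $\vec\Delta\in\{3\vec r,\vec r\}$ it is the paper's argument: $A$ is a single unipotent Jordan block, $\operatorname{im}(A-I)=\vec\nabla^{\perp}$ by Lemma~\ref{lem.AminusI}, positivity of $A^n\vec r$ forces $\vec r\in\ker(A-I)$, and $\gcd(\vec\nabla)=3$. You also spell out two steps the paper only asserts: the Bezout step showing $\vec r$ is an integer multiple of the primitive Markov triple, and why $\gcd(\vec\nabla)=3$.

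For (\ref{eq:AminusId}) your route differs slightly from the paper's. The paper exhibits the particular solution $\vec\delta=\frac{3}{\Delta_2}(0,\Delta_1,\Delta_1\Delta_2-\Delta_3)^T$ of $\vec\delta\times\vec r=-\vec\nabla$, which uses both the Markov equation and $\vec r=\tfrac13\vec\Delta$, and then computes $(A-I)\vec\delta=3\vec\Delta$. You instead observe $\vec d\in\vec\nabla^{\perp}=\operatorname{span}(e_1,e_2)$ and read off $\beta$ from a single coordinate. This is shorter. It also makes explicit that the constant is the scalar $c$ with $\vec\Delta=c\vec r$. So the factor $3$ holds exactly in the former case, while the latter case gives $A^n\vec d=\vec d+n\vec\Delta$; the former case is also the only one the paper's proof and its later use actually cover.
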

\begin{proof}
From Proposition~\ref{prop.MarkovTypeCondition} we know $A$ has eigenvalue 1 with multiplicity 3. Furthermore, by Lemma~\ref{lem.AminusI}(2), $\ker (A-I) = \mathbb{Q} \vec{\Delta}$ so $A$ must consist of a single Jordan block. Thus $\ker(A-I)^2 = \text{im} (A-I)$ is 2-dimensional.  We now claim $\ker(A-I)^2 = \vec{\nabla}^{\perp}$ where $\vec{\nabla} = \left(
\begin{smallmatrix}
\Delta_2 \\ 
\Delta_3 - \Delta_1 \Delta_2 \\ \Delta_1  
\end{smallmatrix}\right)$.  To prove the claim, we observe that since $v \in \ker(A-I)^{2}=\text{im}(A-I)$ implies $v=(A-I)w$, $\vec{\nabla} \cdot v = 0$ by Lemma~\ref{lem.AminusI}(1).

Thus for Equation~(\ref{eq.crossProductCondition}) to hold, we must have $\vec{r} \in \ker (A-I)^2$, and hence in $\text{im} (A-I)$ so $A \vec{r} = \vec{r} + \alpha \vec{\Delta}$ for some $\alpha \in \mathbb{Q}$. But then $A^n \vec{r} = \vec{r}  + n\alpha \vec{\Delta}$ which must have negative entries for some $n$ unless $\alpha = 0$. We have thus shown that $\vec{r} \in \ker(A-I) = \mathbb{Q} \Delta$. Now the greatest common divisor of all the coordinates of $\vec{\nabla}$ is 3, so either $\vec{r} = \vec{\Delta}$ or $\frac{1}{3}\vec{\Delta}$. 

Prompted by the first column of (\ref{eq:Amatrix}), we let 
$\vec{\delta}:= \frac{3}{\Delta_2} \begin{pmatrix}
0 \\ \Delta_1 \\ \Delta_1 \Delta_2 - \Delta_3    
\end{pmatrix}$. A direct computation shows that $\vec{\delta} \times \vec{r} = - \vec{\nabla}$ so $\vec{d} = \vec{\delta} + \beta \vec{\Delta}$ for some $\beta \in \mathbb{Q}$. It follows by direct computation that 
$$(A-I) \vec{d} = (A-I)\vec{\delta} = 3\vec{\Delta}$$
giving (\ref{eq:AminusId}).
\end{proof}
\begin{remark}
Recall from Proposition~\ref{prop.eigenA} that if Equation~(\ref{eq.rMarkov}) holds, then $A \vec{r} = \vec{r}$ so positivity of ranks is guaranteed.
\end{remark}

Consider the Markov triple $\vec{\rho} = (\rho_0,\rho_1,\rho_2)^T = \frac{1}{3}\vec{\Delta}$. We define three ``signed Markov'' mutations as follows. For any vector $\vec{v} = (v_0,v_1,v_2)^T\in \mathbb{Q}^3$, let
$$  
\mu_0(\vec{v}) = 
\begin{pmatrix}
v_0 + 3v_1v_2 \\ v_1 \\ v_2 
\end{pmatrix}, \quad
\mu_1(\vec{v}) = 
\begin{pmatrix}
v_0 \\ v_1 - 3v_0v_2\\ v_2 
\end{pmatrix}, \quad
\mu_2(\vec{v}) = 
\begin{pmatrix}
 v_0 \\ v_1 \\v_2 + 3v_0v_1
\end{pmatrix}, \quad
$$
Recall that the absolute values of the coordinates of $\mu_1(\vec{\rho})$ are  Markov triple since $\vec{\rho}$ is. 
The choice of signs means that the Markov equation (\ref{eq.Markov}) gives the conditions
\begin{equation}  \label{eq.MarkovMutatePerp}
\vec{\rho} \perp \mu_1(\vec{\rho})=:\vec{\mu}_1, \quad \mu_1(\vec{\rho}) \perp \mu_0(\mu_1(\vec{\rho}))=:\vec{\mu}_0, \quad \mu_1(\vec{\rho}) \perp \mu_2(\mu_1(\vec{\rho}))=:\vec{\mu}_2
\end{equation}

We now consider the two cases in Proposition~\ref{prop:risdelta} separately:
\begin{enumerate}
\item{Case 1:}  $\vec{r} = \vec{\rho}$.  We solve Equation~(\ref{eq.crossProductCondition}) for $\vec{d}$ which now becomes 
\begin{equation} \label{eqn.case1}
    \vec{d} \times \vec{\rho} = -3 \vec{\mu}_1
\end{equation}
Any solution $\vec{d}$, if it exists, is unique up to an additive multiple of $\vec{r}$. Also, $\vec{d}$ must be orthogonal to $\vec{\mu}_1$ so natural choices are linear combinations of $\vec{\mu}_0, \vec{\mu}_2$. One computes 
\begin{equation}  \label{eq.mumuCrossr}
\vec{\mu}_0 \times \vec{\rho} = -3r_2^2 \vec{\mu}_1, \quad \vec{\mu}_2 \times \vec{\rho} = -3r_0^2 \mu_1(\vec{\rho})
\end{equation}

\item{Case 2:} $\vec{r} = 3 \vec{\rho}$.  A similar analysis as in Case 1 yields 
\begin{equation} \label{eqn.case2}
    \vec{d} \times \vec{\rho} = - \vec{\mu}_1
\end{equation}
and 
\begin{equation}  \label{eq.mumuCrossr2}
\vec{\mu}_0 \times \vec{\rho} = -3\rho_2^2 \vec{\mu}_1, \quad \vec{\mu}_2 \times \vec{\rho} = -3\rho_0^2 \mu_1(\vec{\rho})
\end{equation}

\end{enumerate}

Recall that the coordinates of a Markov triple are relatively prime. Thus we have the following 
\begin{theorem}  \label{thm.existMarkov}
Let $\vec{r} \in \mathbb{N}^3$ be a Markov triple and set $\vec{\Delta}:=(\Delta_2,\Delta_3,\Delta_1) = 3(r_0,r_1,r_2)$. Then one can find degrees $\vec{d} \in \mathbb{Z}^3$ such that the seed in $K_0(X)$ defined by $\vec{d}, \vec{r}$ has determinants given by $\vec{\Delta}$. In fact, given any integers $a,b$ such that $ar_0^2 + br_2^2 = 1$ we have the solution $\vec{d} = a \vec{\mu}_2 + b \vec{\mu}_0$ and all other solutions can be obtained from this by adding appropriate multiples of $\vec{r}$. 
\end{theorem}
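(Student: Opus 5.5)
The plan is to reduce everything to the cross product condition of Proposition~\ref{prop.crossProductCond}. With $\vec{r}=\vec{\rho}$ (Case~1, ranks equal to the Markov triple) and $\vec{\Delta}=3\vec{\rho}$, that condition reads
\[
\vec{d}\times\vec{r} = -\bigl(\Delta_2,\,-(\Delta_1\Delta_2-\Delta_3),\,\Delta_1\bigr)^T .
\]
Here $-(\Delta_1\Delta_2-\Delta_3) = 3(r_1 - 3r_0r_2)$, so the right-hand side is exactly $-3\vec{\mu}_1$, as recorded in (\ref{eqn.case1}). So it suffices to solve $\vec{d}\times\vec{\rho}=-3\vec{\mu}_1$ over $\mathbb{Z}$ and then describe the full solution set.

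First I will verify the identities (\ref{eq.mumuCrossr}) by direct computation. Note that $\vec{\mu}_0$ and $\vec{\mu}_2$ differ from $\vec{\mu}_1$ only in one coordinate each. For $\vec{\mu}_2$, the cross product with $\vec{\rho}$ is
\[
(r_1'r_2-(r_2+3r_0r_1')r_1,\ \ldots)
\]
with $r_1'=r_1-3r_0r_2$. Simplifying each coordinate with the Markov equation $r_0^2+r_1^2+r_2^2=3r_0r_1r_2$ should produce $-3r_0^2\vec{\mu}_1$; the computation for $\vec{\mu}_0$ is symmetric, giving $-3r_2^2\vec{\mu}_1$. This bookkeeping is the only real work. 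I would organise it by using the perpendicularity relations (\ref{eq.MarkovMutatePerp}): $\vec{\mu}_2\times\vec{\rho}$ is orthogonal to both $\vec{\rho}$ and $\vec{\mu}_2$, and $\vec{\mu}_1$ is too, so the two vectors are proportional. Then only one coordinate needs computing to get the scalar.

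Given these identities, bilinearity gives
\[
(a\vec{\mu}_2+b\vec{\mu}_0)\times\vec{\rho} = -3(ar_0^2+br_2^2)\vec{\mu}_1 = -3\vec{\mu}_1
\]
whenever $ar_0^2+br_2^2=1$. The vectors $\vec{\mu}_0,\vec{\mu}_2$ are integral, so $\vec{d}$ is integral.

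Existence of such integers $a,b$ follows from $\gcd(r_0,r_2)=1$, since Markov triples are pairwise relatively prime \cite[Corollary 3.4]{hundred}. Finally, the map $\vec{d}\mapsto\vec{d}\times\vec{r}$ is linear with kernel $\mathbb{Q}\vec{r}$ because $\vec{r}\neq 0$. Hence any two rational solutions differ by a rational multiple of $\vec{r}$. For integral solutions, primitivity of $\vec{r}$ (its gcd is $1$) forces that multiple to be an integer, so all integral solutions are the given one plus $\mathbb{Z}\vec{r}$.

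I expect the main obstacle to be the sign and scalar bookkeeping in (\ref{eq.mumuCrossr}). The proportionality argument reduces it to a single coordinate check.
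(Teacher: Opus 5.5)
\textbf{The sign of the $\vec\mu_2$ identity is wrong.} Your reduction to $\vec d\times\vec\rho=-3\vec\mu_1$, the B\'ezout step, and the description of all integral solutions as a coset of $\mathbb{Z}\vec r$ are fine, and this is the route the paper has in mind. The step that fails is the one you call bookkeeping: the identity $\vec\mu_2\times\vec\rho=-3r_0^2\vec\mu_1$ is false.

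The $\vec\mu_2$ computation is not symmetric to the $\vec\mu_0$ one. The coordinate swap $0\leftrightarrow 2$ does carry $\mu_0$ to $\mu_2$ and fixes $\mu_1$, but it reverses orientation, so it flips the sign of a cross product. Your own one-coordinate shortcut shows this:
\begin{itemize}
\item the third coordinate of $\vec\mu_2\times\vec\rho$ is $r_0r_1-(r_1-3r_0r_2)r_0=3r_0^2r_2=+3r_0^2(\vec\mu_1)_2$;
\item the first coordinate of $\vec\mu_0\times\vec\rho$ is $(r_1-3r_0r_2)r_2-r_2r_1=-3r_2^2(\vec\mu_1)_0$.
\end{itemize}
Hence
\[
(a\vec\mu_2+b\vec\mu_0)\times\vec\rho=3\bigl(ar_0^2-br_2^2\bigr)\vec\mu_1,
\]
which is not $-3\vec\mu_1$ in general.

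For instance, take $\vec r=(1,1,1)$ and $(a,b)=(1,0)$. Then $\vec d=\vec\mu_2=(1,-2,-5)$ and $\vec d\times\vec r=(3,-6,3)=+3\vec\mu_1$, so $\Delta_1=\Delta_2=-3$ and the slopes decrease. The actual line-bundle seed $(0,3,6)$ equals $\vec\mu_0+5\vec r$.

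So the explicit formula cannot be proved as stated. The slip is already in the paper's display (\ref{eq.mumuCrossr}), which is asserted there without computation. The correct statement is $\vec d=b\vec\mu_0-a\vec\mu_2$ whenever $ar_0^2+br_2^2=1$. With that change, your existence and uniqueness-modulo-$\mathbb{Z}\vec r$ arguments go through unchanged.

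\textbf{You also omit the point the paper's proof actually addresses.} The paper takes the solution of (\ref{eqn.case1}) as given and checks that $\gcd(d_i,r_i)=1$ for each $i$. This is what allows the seed to consist of classes of stable bundles. The argument is short:
\begin{itemize}
\item A prime $p$ dividing both $d_i$ and $r_i$ divides the two coordinates of $\vec d\times\vec r=-3\vec\mu_1$ that involve index $i$.
\item Since $p\mid r_i$ and $3\nmid r_i$ by Lemma \ref{lemma.three}, we have $p\neq 3$, so $p$ divides two entries of $\vec\mu_1$.
\item The absolute values of the entries of $\vec\mu_1$ form a pairwise coprime Markov triple, which is a contradiction.
\end{itemize}
This uses only the cross-product equation, so it is unaffected by the sign issue above.
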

\begin{proof}
It remains only to check $\gcd(d_i,r_i) = 1$. But any non-trivial common divisor of $d_i,r_i$ would also divide two of the components of $\mu_1(\vec{\Delta})$. This contradicts the fact that the absolute values of the components of $\mu_1(\vec{\Delta})$ are again a Markov triple. 
\end{proof}

It remains only to eliminate the possibility $\vec{r} = 3 \vec{\rho}$. 

\begin{theorem}  \label{thm:onlyMarkov}
Let $(E_0,E_1,E_2)$ be a seed that generates an elliptic helix $\underline{\mathcal{E}}$ on $X$ with polynomial growth. Then 
$$
(\rk E_0, \rk E_1,\rk E_2) =: \vec{r}
$$ 
is a Markov triple and $\vec{\Delta} = 3 \vec{r}$.
\end{theorem}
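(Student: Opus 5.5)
The plan is to start from Proposition~\ref{prop:risdelta} and show that its second alternative, $\vec{r} = \vec{\Delta} = 3\vec{\rho}$, can never occur for an actual elliptic helix. The obstruction will come from the coprimality of degree and rank, examined modulo $3$. First, I would check that the hypotheses of Proposition~\ref{prop:risdelta} are met. By Proposition~\ref{prop.whyMarkov}, polynomial growth places us in the Markov case. The seed satisfies (\ref{eq.crossProductCondition}) by Proposition~\ref{prop.crossProductCond}. The ranks $A^n\vec{r}$ are ranks of bundles in the helix, so they are positive integers. Hence $\vec{\Delta} = 3\vec{r}$ or $\vec{\Delta} = \vec{r}$. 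In the first case, $\vec{r} = \vec{\rho}$ is a Markov triple by (\ref{eq.Markov}), and we are done. So it remains to rule out $\vec{r} = 3\vec{\rho}$.

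Suppose $\vec{r} = 3\vec{\rho}$. Each $\cE_i$ is elliptically exceptional, so by the remark after the definition of elliptically exceptional objects it is a bundle with $\gcd(d_i, r_i) = 1$. Since $3 \mid r_i$, this forces $d_i \not\equiv 0 \pmod 3$ for $i = 0, 1, 2$. On the other hand, we are in Case~2, so (\ref{eqn.case2}) gives $\vec{d}\times\vec{\rho} = -\vec{\mu}_1$. Now $\vec{\mu}_1 = (\rho_0,\ \rho_1 - 3\rho_0\rho_2,\ \rho_2)^T \equiv \vec{\rho} \pmod 3$. Reducing mod $3$ therefore gives
$$\vec{d}\times\vec{\rho} \equiv -\vec{\rho} \pmod 3.$$

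Next, I would normalise $\vec{\rho}$ mod $3$. By Lemma~\ref{lemma.three}, each $\rho_i \equiv \pm 1 \pmod 3$. Let $D = \operatorname{diag}(\epsilon_0,\epsilon_1,\epsilon_2)$ with $\epsilon_i = \pm 1$ chosen so that $D\vec{\rho} \equiv (1,1,1)^T$. Using the identity $(D\vec{d})\times(D\vec{\rho}) = \det(D)\, D(\vec{d}\times\vec{\rho})$, the congruence becomes
$$\vec{d}'\times(1,1,1)^T \equiv \pm(1,1,1)^T \pmod 3, \qquad \vec{d}' = D\vec{d}.$$
Replacing $\vec{d}$ by $\vec{d}'$ does not change which coordinates are divisible by $3$.

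Finally, I would solve this congruence explicitly over $\mathbb{F}_3$. We have $\vec{d}'\times(1,1,1)^T = (d'_1-d'_2,\ d'_2-d'_0,\ d'_0-d'_1)^T$. Setting this equal to $\pm(1,1,1)^T$ gives $d'_1 = d'_2 \pm 1$ and $d'_0 = d'_2 \mp 1$; the third equation then holds automatically. So modulo $3$ the coordinates of $\vec{d}'$ are $d'_2,\ d'_2+1,\ d'_2-1$ in some order. These exhaust all residues mod $3$, so some $d_i$ is divisible by $3$, a contradiction. The main obstacle I expect is the bookkeeping in this last part: tracking the sign conventions (the $\det D$ factor and the sign in (\ref{eqn.case2})) carefully enough to confirm that every sign choice still leads to three distinct residues. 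The parametrisation above suggests that it does in each case.
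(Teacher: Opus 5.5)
Your proof is correct. It shares the paper's strategy: rule out $\vec r = 3\vec\rho$ by forcing some degree to be divisible by $3$, contradicting coprimality of degree and rank. The execution differs, though.

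\textbf{The paper's route.} It writes $\vec d = \frac{b_0}{m}\vec\mu_0 + \frac{b_2}{m}\vec\mu_2$ and uses the cross products (\ref{eq.mumuCrossr2}) to get $3 \mid m$. Integrality of $d_1$ together with Lemma~\ref{lemma.three} applied to $\rho_{02}$ then gives $3 \mid b_0+b_2$, and the paper concludes $3 \mid d_0$.

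\textbf{Your route.} You reduce (\ref{eqn.case2}) modulo $3$ directly, needing only $\vec\mu_1 \equiv \vec\rho$ and Lemma~\ref{lemma.three} for the $\rho_i$. You then solve a linear system over $\mathbb{F}_3$. This is shorter, treats the three coordinates symmetrically, and needs no basis of $\vec\mu_1^{\perp}$.

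\textbf{Why your route is also more robust.} In the paper's last step the numerator $(b_0+b_2)\rho_0 + 3b_0\rho_2\rho_{02}$ is divisible by $3$. But $d_0$ is that numerator divided by $m$, and $m$ is itself divisible by $3$. So $3 \mid d_0$ does not follow, and in fact it is false in general. For $\vec\rho=(1,1,1)$, the vector $\vec d=(1,2,3)$ satisfies (\ref{eqn.case2}) with $d_0=1$; here $d_2=3$ carries the divisibility. More generally, replacing $\vec d$ by $\vec d+\vec\rho$ preserves (\ref{eqn.case2}) and moves every residue, since each $\rho_i\not\equiv 0$. So no fixed coordinate is forced to vanish mod $3$. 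Only the statement ``some $d_i$ is divisible by $3$'' can be forced, and that is exactly what you prove.

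Your argument also never uses (\ref{eq.mumuCrossr2}). Its second formula should read $\vec\mu_2\times\vec\rho = +3\rho_0^2\vec\mu_1$, though this sign does not affect $3\mid m$.

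The sign bookkeeping you were worried about is already complete. For either $\epsilon=\pm1$, the residues $d'_2,\ d'_2+\epsilon,\ d'_2-\epsilon$ exhaust $\mathbb{Z}/3$, and $\det D$ only affects which $\epsilon$ occurs. No further case check is needed.
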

\begin{proof}
We know from Proposition~\ref{prop.whyMarkov} that $\vec{\Delta} = 3 \vec{\rho}$ for some Markov triple $\vec{\rho}$. Furthermore, Proposition~\ref{prop:risdelta} reduces the possibilities to $\vec{r} = \vec{\rho}$ or $3 \vec{\rho}$. We seek a contradiction to the latter case by and so assume $\vec{r} = 3 \vec{\rho}$. We solve now (\ref{eqn.case2}).  Since $\vec{\mu}_0,\vec{\mu}_2$ are linearly independent, as one can check by explicit computation, we can find integers $b_0,b_2,m$ such that $\vec{d} = \frac{b_0}{m}\vec{\mu}_0 + \frac{b_2}{m}\vec{\mu}_2$. We may of course, assume that $m$ is as small a positive integer as possible. From the computations in Equation~(\ref{eq.mumuCrossr2}), this reduces to 
\begin{equation}  \label{eq.mandbs}
m = 3(b_0 \rho_2^2 + b_2 \rho_0^2 )
\end{equation}
so in particular, $m$ is divisible by 3. 
Writing $\rho_{02}$ for $\rho_1 - 3 \rho_0\rho_2$ we have 
\begin{equation}  \label{eq:mus}
\vec{\mu}_1 =
\begin{pmatrix}
\rho_0 \\ \rho_{02} \\ \rho_2
\end{pmatrix}, \quad 
\vec{\mu}_0 =
\begin{pmatrix}
\rho_0 + 3 \rho_{02} \rho_2\\ \rho_{02} \\ \rho_2
\end{pmatrix}, \quad
\vec{\mu}_2 =
\begin{pmatrix}
\rho_0 \\ \rho_{02} \\ \rho_2 + 3 \rho_{02} \rho_0
\end{pmatrix}
\end{equation}
We now use integrality of the coordinates of $\vec{d}$. From the middle coordinate, we know that $(b_0 + b_2)\rho_{02} \in m \mathbb{Z}$. Now $-\rho_{02}$ is a Markov number so cannot be divisible by 3 by Lemma \ref{lemma.three}. Thus $3| b_0 + b_2$. 

Now
\begin{equation*}
d_0 = (b_0+b_2)\rho_0 + 3b_0\rho_2 \rho_{02} \equiv 0 \bmod{3}.
\end{equation*}
This contradicts stability of $\cE_0$ since $3| r_0$ too. 
\end{proof}

\section{Elliptic helices of Markov type are ample}  \label{sec:ellipticHelixAmple}
The goal of this section is to show that three-periodic elliptic helices on $X$ with polynomial growth are ample.  As a consequence, we prove, in Corollary \ref{cor.leftrightnoetherian}, that endomorphism algebras and their quadratic covers are noetherian.  In what follows, we refer to such helices as elliptic helices of Markov type.

\begin{proposition} \label{prop.ellipticample}
Suppose $\underline{\mathcal{E}} = (\mathcal{E}_{i})_{i \in \mathbb{Z}}$ is an elliptic helix of Markov type. Let $d_{i}=\operatorname{deg }\mathcal{E}_{i}$, let $r_{i} = \operatorname{rank }\mathcal{E}_{i}$, and suppose $\mu_{i} := d_{i}/r_{i}$.  Then $\lim_{n \to -\infty} \mu_{n}=-\infty$, and $\underline{\mathcal{E}}$ is ample for ${\sf Coh }X$.    
\end{proposition}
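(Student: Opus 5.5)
Two things have to be shown: the slopes $\mu_n$ tend to $-\infty$ as $n\to-\infty$, and ampleness then follows from Lemma~\ref{lem:amplenessForBundles}.

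\textbf{Step 1: ranks and degrees along the helix.} By Theorem~\ref{thm:onlyMarkov}, $\vec r=(r_0,r_1,r_2)$ is a Markov triple and $\vec\Delta=3\vec r$. Proposition~\ref{prop.eigenA} then gives $A\vec r=\vec r$, so the ranks are $3$-periodic: $r_{i+3n}=r_i$. Equation~(\ref{eq:AminusId}) gives $A^n\vec d=\vec d+3n\vec\Delta=\vec d+9n\vec r$. Hence
$$
d_{i+3n}=d_i+9n\,r_i, \qquad \mu_{i+3n}=\mu_i+9n .
$$
The slopes are therefore a fixed finite pattern shifted linearly in $n$, so $\mu_n\to-\infty$ as $n\to-\infty$. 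Since Proposition~\ref{prop:risdelta} indexes the seed from an arbitrary base point, and the helix is translation invariant, this formula holds for every residue class $i$.

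\textbf{Step 2: the vanishing hypothesis of Lemma~\ref{lem:amplenessForBundles}.} Fix $\mathcal M\in{\sf Coh}X$. We need $\Ext^1_X(\cE_i,\mathcal M)=0$ for $i\ll0$. By Serre duality on the elliptic curve,
$$
\Ext^1(\cE_i,\mathcal M)\cong \Hom(\mathcal M,\cE_i)^*.
$$
Split $\mathcal M$ into its torsion part and its torsion-free part; the latter is a vector bundle. The torsion part maps to zero in any bundle. For the bundle part, take its Harder--Narasimhan filtration, with maximal slope $\mu_{\max}(\mathcal M)$. Each $\cE_i$ is stable by Remark~\ref{rem:ellipticHelix}(1). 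A semistable bundle admits no nonzero map to a semistable bundle of strictly smaller slope, so $\Hom(\mathcal M,\cE_i)=0$ once $\mu_i<\mu_{\min}$ of the torsion-free part. By Step~1 this holds for all $i\ll0$. The vanishing is needed as $i\to-\infty$ because ampleness concerns the objects $\cE_{-i}$ with $i\gg0$.

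\textbf{Step 3: conclude.} Lemma~\ref{lem:amplenessForBundles} applies to the sequence $\underline{\cE}$ and gives ampleness. It yields projectivity together with surjections from sums of $\cE_{-i_j}$ with $i_j$ as large as desired, which matches the indexing in our definition of ample.

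\textbf{Main obstacle.} The real content is Step~1, getting linear drift of the slopes. Positivity of the ranks alone is not enough; one needs exactly the unipotent structure from Proposition~\ref{prop:risdelta}: $\vec r$ lies in $\ker(A-I)$ while $(A-I)\vec d=9\vec r\neq0$. The points to verify are that~(\ref{eq:AminusId}) really applies to the seed at every position, and the sign: $9n\,r_i$ is negative for $n<0$, which is what makes the slopes decrease.
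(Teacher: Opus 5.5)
Your proof is correct and takes the same route as the paper. Both arguments get $3$-periodic ranks and linearly drifting degrees from Proposition~\ref{prop:risdelta}, then use Serre duality and slope vanishing against the stable $\cE_i$ to verify the hypothesis of Lemma~\ref{lem:amplenessForBundles}. The one cosmetic difference is that the paper filters $\mathcal{M}$ by stable pieces and inducts on $\Ext^1$, while you split off torsion and use the Harder--Narasimhan filtration; your stray mention of $\mu_{\max}$ is harmless, since $\mu_i<\mu_{\min}$ is the condition you actually need and use.
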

\begin{proof}
We continue the notation from Section~\ref{sec:seedsMarkovCase}. From Proposition~\ref{prop:risdelta} we know that $\vec{r}$ is also a 1-eigenvector for $A$ so $r_n$ is 3-periodic in $n$ and in particular bounded. Also Proposition~\ref{prop:risdelta} (\ref{eq:AminusId}) shows that $d_n \to -\infty$ as $n \to - \infty$ so $\lim_{n \to -\infty} \mu_{n}=-\infty$. We now use the ampleness criterion of Lemma~\ref{lem:amplenessForBundles} and so consider $\mathcal{M} \in {\sf Coh} X$ and show $\Ext^1_X( \mathcal{E}_i, \mathcal{M}) = 0$ for $i \ll 0$. Filtering $\mathcal{M}$, we may assume by induction that $\mathcal{M}$ is a simple sheaf with slope $\mu$. Then by Serre duality we have 
$$ \Ext^1_X( \mathcal{E}_i, \mathcal{M}) \simeq \Hom_X(\mathcal{M}, \mathcal{E}_i)^* = 0$$
as soon as the slope of $\mathcal{E}_i$ drops below $\mu$. 
\end{proof}

\begin{corollary} \label{cor.leftrightnoetherian}
If $\underline{\mathcal{E}}$ is an elliptic helix of Markov type, then $\operatorname{End }(\underline{\mathcal{E}})$ and $\mathbb{S}^{nc}(\underline{\mathcal{E}})$ are noetherian.  
\end{corollary}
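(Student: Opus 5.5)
We combine the ampleness result of Proposition~\ref{prop.ellipticample} with the noncommutative machinery of Section~2. Write $B := \End(\underline{\mathcal{E}})$; since $\underline{\mathcal{E}}$ is an elliptic helix, $B$ is positively graded, so $B = B_{\geq 0}$.

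\textbf{Step 1: $B$ is left and right noetherian.} By Proposition~\ref{prop.ellipticample}, $\underline{\mathcal{E}}$ is ample for ${\sf Coh}\,X$, and ${\sf Coh}\,X$ is noetherian. By Theorem~\ref{theorem.polish}, $B$ is therefore right noetherian.

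For the left side we use Proposition~\ref{prop.leftright} with the duality $D := \mathcal{H}om_X(-,\mathcal{O}_X)$. This is not a duality on all of ${\sf Coh}\,X$, since it kills torsion. I would therefore run the argument of Proposition~\ref{prop.leftright} directly. The only input that proof actually uses is the anti-isomorphism $B^o \cong \End(D(\underline{\mathcal{E}}))_{\geq 0}$, and this holds because $D$ is an anti-equivalence on vector bundles and every $\mathcal{E}_i$ is a bundle.

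So it suffices to show that $\mathcal{F}_i := \mathcal{E}_{-i}^{*}$ is an ample sequence. These are stable bundles. Their ranks are $3$-periodic, and their slopes $-\mu_{-i}$ tend to $-\infty$ as $i \to -\infty$, because $\mu_n \to +\infty$ as $n \to +\infty$ by (\ref{eq:AminusId}). Hence the proof of Proposition~\ref{prop.ellipticample} applies verbatim: filter $\mathcal{M}$ by stable pieces and torsion, and use Serre duality, which gives $\Ext^1(\mathcal{F}_i, \mathcal{M}) = 0$ for $i \ll 0$. Lemma~\ref{lem:amplenessForBundles} then yields ampleness. Consequently $B^o$ is right noetherian, so $B$ is left noetherian.

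Alternatively, one can note that the dual sequence $(\mathcal{E}_{-i}^*)$ is again an elliptic helix of Markov type, since dualising turns right mutations into left mutations, and then cite Proposition~\ref{prop.ellipticample} directly.

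\textbf{Step 2: $\mathbb{S} := \mathbb{S}^{nc}(\underline{\mathcal{E}})$ is noetherian.} By Theorem~\ref{thm:BPBisKoszul}, $\pi\colon \mathbb{S} \to B$ is surjective. By Corollary~\ref{cor:cubicDivisor}, $\ker \pi$ is generated by a normal family $x = \{x_i\}$ of degree-three elements, so $\mathbb{S}/(x) \cong B$ is right noetherian by Step 1. Proposition~\ref{prop.hbt} then gives that $\mathbb{S}$ is right noetherian.

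For left noetherianity, apply the same argument to $\mathbb{S}^o$. The opposite of a quadratic cover is the quadratic cover of the opposite, so $\mathbb{S}^o$ is the quadratic cover of $B^o$. Normality of $x$ passes to the opposite, and $B^o$ is right noetherian by Step 1, so Proposition~\ref{prop.hbt} shows $\mathbb{S}^o$ is right noetherian.

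\textbf{Main obstacle.} The delicate point is the left-sided statement: Proposition~\ref{prop.leftright} is literally stated for a duality on ${\sf C}$, and dualisation is not one on ${\sf Coh}\,X$. I would handle this either by observing that its proof only uses the anti-isomorphism of endomorphism $\mathbb{Z}$-algebras, or by restricting to the derived category with $\mathbf{R}\mathcal{H}om(-,\mathcal{O}_X)$. A secondary point to check is that a normal family in $\mathbb{S}$ remains a normal family in $\mathbb{S}^o$, which should be immediate from \cite[Definition 8.6]{morph}.
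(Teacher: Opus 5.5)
Your proposal is correct and follows the paper's proof. Ampleness plus Theorem~\ref{theorem.polish} gives the right-noetherian property of $\End(\underline{\mathcal{E}})$, the dual sequence via (the proof of) Proposition~\ref{prop.leftright} gives the left side, and Corollary~\ref{cor:cubicDivisor} with Proposition~\ref{prop.hbt}, applied to $\mathbb{S}$ and to $\mathbb{S}^o$, handles $\mathbb{S}^{nc}(\underline{\mathcal{E}})$.

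The one variation is in the left-noetherian step. You check ampleness of $(\mathcal{E}_{-i}^*)$ directly from stability and slopes tending to $-\infty$. The paper instead deduces, via the proof of \cite[Theorem 7.23]{morph}, that the dual is again an elliptic helix of Markov type and then applies Proposition~\ref{prop.ellipticample}. Your caveat that $\mathcal{H}om_X(-,\mathcal{O}_X)$ is not a duality on ${\sf Coh}\,X$ is also correct. Only the anti-isomorphism on the bundles $\mathcal{E}_i$ is actually used, and this refines a point the paper passes over.
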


\begin{proof}
For the first assertion, it remains to check the left noetherian condition, and this will follow from Proposition \ref{prop.leftright} provided we can prove the dual helix is ample.  By Proposition \ref{prop.ellipticample}, it thus suffices to show the dual helix is an elliptic helix of Markov type.  Since every triple of components of the dual helix has ranks a Markov triple, this is clear in light of the proof of \cite[Theorem 7.23]{morph}.

The fact that $\mathbb{S}^{nc}(\underline{\mathcal{E}})$ is right noetherian follows immediately from Corollary \ref{cor:cubicDivisor} and Proposition \ref{prop.hbt}.  To prove it is left noetherian, it suffices to prove that the opposite algebra is right noetherian.  However, we know the opposite algebra of $\operatorname{End }(\underline{\mathcal{E}})$ is right noetherian by the first part of the proof.  Thus, the result holds by considering the short exact sequence from Corollary \ref{cor:cubicDivisor} applied to opposite algebras and again invoking Proposition \ref{prop.hbt}. 
\end{proof}

Let $\underline{\cE}$ be an elliptic helix of Markov type and $d_i, r_i$ denote the degree and rank of $\cE_i$ as usual. Here we give a more direct computation of the Hilbert series of $\End(\underline{\cE})$. Let $i',j' \in \{0,1,2\}$ and $m \in \mathbb{N}$. From Proposition~\ref{prop:risdelta} (\ref{eq:AminusId}) we have $d_{i'+3m} = d_{i'} + 3m \Delta_{i'-1}$. Now (\ref{eq:chiEE}) and the fact that $3r_j = \Delta_{j-1}$ shows for $j' < i' + 3m$ that 
\begin{equation*}
\dim_k \Hom(\cE_{j'}, \cE_{i' + 3m}) = 
\begin{vmatrix}
    d_{i'} + 3m \Delta_{i'-1} & d_{j'} \\
    r_{i'} & r_{j'}
\end{vmatrix}
 = \chi({\cE_{j'}, \cE_{i'})} + m \Delta_{i'-1}\Delta_{j'-1}
\end{equation*}

\section{Mutating elliptic helices} \label{section.mutate}

Given an elliptic helix $\underline{\cE}$ on a elliptic curve $X$, we construct a new elliptic helix by mutating every third bundle. We show that in the Markov case, the triple of ranks of bundles changes correspondingly by a Markov mutation. 

\begin{proposition}  \label{prop:mutateHelix}
Let $\underline{\cE}$ be an elliptic helix of bundles on $X$.
\begin{enumerate}
    \item Right  (or left) mutating every third bundle yields another elliptic helix $\underline{\cE'}$ so in particular, the following is an elliptic helix
    \begin{equation}  \label{eq:mutatedHelix}
        \ldots, \cE_1, R_{\cE_1} \cE_0, \cE_2, \cE_4, R_{\cE_4}\cE_3, \cE_5, \ldots
    \end{equation}
    \item Suppose we are in the Markov case of Theorem~\ref{thm:onlyMarkov} so in particular, the rank triple of the seed $\vec{r} := (r_0 = \rk \cE_0, r_1 = \rk \cE_1,r_2 = \rk \cE_2)$ is a Markov triple. Then the seed of (\ref{eq:mutatedHelix}) has rank triple $(r_1, 3r_2r_1 - r_0,r_2)$
\end{enumerate}
\end{proposition}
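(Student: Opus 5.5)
Write $\cE'_{3k} := \cE_{3k+1}$, $\cE'_{3k+1} := R_{\cE_{3k+1}}\cE_{3k}$ and $\cE'_{3k+2} := \cE_{3k+2}$; this is the sequence (\ref{eq:mutatedHelix}). For part (1) I show three things: each $\cE'_i$ is a stable bundle, consecutive pairs are right mutable, and the double right mutation of $\cE'_i$ through $\cE'_{i+1},\cE'_{i+2}$ is $\cE'_{i+3}$. Remark~\ref{rem:ellipticHelix}(4) then gives that $\underline{\cE'}$ is an elliptic helix.

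\textbf{Stability of the new bundles.} By definition $R_{\cE_1}\cE_0$ is the cokernel of the injective coevaluation map, so it is a coherent sheaf. From the helix sequence (\ref{eq:helixExactSeq}) it equals $L_{\cE_2}\cE_3$, a subsheaf of $\Hom(\cE_2,\cE_3)\otimes\cE_2$, hence it is a vector bundle. It is exceptional in the triangulated sense: mutation is an equivalence on exceptional pairs, so $\mathbf{R}\Hom(R_{\cE_1}\cE_0,R_{\cE_1}\cE_0)\simeq\mathbf{R}\Hom(\cE_0,\cE_0)$. Hence it is simple and therefore stable. Its slope lies strictly between $\mu(\cE_1)$ and $\mu(\cE_2)$. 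The lower bound holds because it is a quotient of $\cE_1^{\oplus}$ with ${}^0(\cE_1, R_{\cE_1}\cE_0)\neq0$. The upper bound holds because it is a subsheaf of $\cE_2^{\oplus}$ and $\Hom(R_{\cE_1}\cE_0,\cE_2)\neq 0$.

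\textbf{Mutability.} With increasing slopes and stability, all higher Ext's from earlier to later terms vanish. For the evaluation and coevaluation maps I use the standard braid-type identities of mutations in $D^b(X)$:
\[
R_{R_{\cE_1}\cE_0}\cE_1 \simeq \cE_0 \text{ (up to the shift/orientation convention)},\qquad R_{\cE_2}R_{\cE_1}\cE_0\simeq \cE_3 .
\]
These yield the required triangles. For example, mutating $\cE'_0=\cE_1$ through $\cE'_1=R_{\cE_1}\cE_0$ and then through $\cE'_2=\cE_2$ should produce $R_{\cE_4}\cE_3=\cE'_3$. I check this using $R_{\cE_2}R_{\cE_1}\cE_0=\cE_3$ and the helix relation $R_{\cE_3}R_{\cE_2}\cE_1=\cE_4$. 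It then remains to verify that the triangles are short exact sequences of sheaves. This follows because all objects involved are bundles and the relevant Hom spaces are concentrated in degree $0$, which makes the evaluation and coevaluation maps injective or surjective as needed. \textbf{The main obstacle} is this bookkeeping: matching the derived-category braid identities to the abelian mutability conditions (1) and (2) of the weak helix definition, in particular for the step that mutates through the new bundle. I would first reduce to the triangle statements and use that exceptional-pair mutations are computed by the triangles of \cite[Prop.~7.1]{bp}.

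\textbf{Part (2).} This is numerical. In the Markov case $\Delta_1=\chi(\cE_0,\cE_1)=3r_2$. The right mutation sequence gives
\[
\rk R_{\cE_1}\cE_0=\Delta_1 r_1-r_0=3r_2r_1-r_0 .
\]
The seed of (\ref{eq:mutatedHelix}) is $(\cE_1,R_{\cE_1}\cE_0,\cE_2)$, so its rank triple is $(r_1,3r_1r_2-r_0,r_2)$. This is the Markov mutation (\ref{eqn.markovtrip2}) of $\vec r$ at the first coordinate, reordered.
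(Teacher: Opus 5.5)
Your part (2) is correct. So is your argument that $R_{\cE_1}\cE_0$ is a stable bundle with slope strictly between $\mu(\cE_1)$ and $\mu(\cE_2)$. The right reason $\mathbf{R}\Hom$ is preserved is that $R_{\cE_1}$ extends to a spherical twist, an autoequivalence of $D^b(X)$; these objects are not exceptional, since $\Ext^1(\cE,\cE)=k$.

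The gap is in the double-mutation step of part (1), which is the real content of the statement. The identity $R_{R_{\cE_1}\cE_0}\cE_1\simeq\cE_0$ that you build on is false, and no shift repairs it. For the line-bundle helix $[\cE_i]=\binom{3i}{1}$ (restriction of $\cO_{\mathbb{P}^2}(i)$ to a cubic), $[R_{\cE_1}\cE_0]=\binom{9}{2}$ and $[R_{R_{\cE_1}\cE_0}\cE_1]=3\binom{9}{2}-\binom{3}{1}=\binom{24}{5}$. What is true is $L_{\cE_1}R_{\cE_1}\cE_0\simeq\cE_0$, but that undoes the mutation, whereas the new sequence mutates $\cE_1$ forward through the new bundle. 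Your sample check also aims at the wrong target: with your indexing, the double mutation of $\cE'_0=\cE_1$ must be $\cE'_3=\cE_4$, not $R_{\cE_4}\cE_3=\cE'_4$. The relations you cite, $R_{\cE_2}R_{\cE_1}\cE_0\simeq\cE_3$ and $R_{\cE_3}R_{\cE_2}\cE_1\simeq\cE_4$, only settle the easy case $\cE'_1\mapsto\cE'_4$, via $R_{\cE_4}R_{\cE_2}(R_{\cE_1}\cE_0)\simeq R_{\cE_4}\cE_3$. The cases $\cE'_0\mapsto\cE'_3$ and $\cE'_2\mapsto\cE'_5$, where you mutate through or mutate the new bundle, are what you flag as ``the main obstacle'', and the proposal gives no way to do them.

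The missing idea is that twists are conjugated by autoequivalences: $R_{\Phi(\cE)}\simeq\Phi\circ R_{\cE}\circ\Phi^{-1}$. Taking $\Phi=R_{\cF}$, resp.\ $\Phi=L_{\cF}=R_{\cF}^{-1}$, gives $R_{R_{\cF}\cE}\circ R_{\cF}\simeq R_{\cF}\circ R_{\cE}$ and $R_{\cF}\circ R_{L_{\cF}\cE}\simeq R_{\cE}\circ R_{\cF}$ (\cite[Lemma~8.21]{Huyb}); this is what the paper uses. Then $R_{R_{\cE_4}\cE_3}R_{\cE_4}\cE_2\simeq R_{\cE_4}R_{\cE_3}\cE_2\simeq\cE_5$. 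Writing the new bundle as $R_{\cE_1}\cE_0\simeq L_{\cE_2}\cE_3$, you also get $R_{\cE_2}R_{L_{\cE_2}\cE_3}\cE_1\simeq R_{\cE_3}R_{\cE_2}\cE_1\simeq\cE_4$.

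Separately, your return to the abelian conditions assumes ``all objects involved are bundles''. Intermediate objects such as $R_{R_{\cE_1}\cE_0}\cE_1$ (which is $L_{\cE_2}\cE_4$) are not known to be sheaves in advance. This follows once the derived identities are in place. Then $R_{\cE'_{i+1}}\cE'_i\simeq L_{\cE'_{i+2}}\cE'_{i+3}$. Since consecutive Homs are concentrated in degree $0$, the left side has cohomology only in degrees $-1,0$ and the right side only in degrees $0,1$. So both are sheaves, which gives injectivity of the first coevaluation and surjectivity of the last evaluation.
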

\begin{proof}
Note first that shifting the indices of an elliptic helix by an additive constant yields another elliptic helix, so part~(1) will be proved if we can show that the collection of stable bundles in (\ref{eq:mutatedHelix}) is an elliptic helix. First note that stable bundles $\cE$ on $X$ are spherical objects in the sense of \cite[Definition~8.1]{Huyb} so the right and left mutations $R_{\cE}, L_{\cE}$ extend to inverse auto-equivalences of $D^b(X)$, by \cite[Proposition~8.6]{Huyb}. Denoting these extensions also by $R_{\cE}, L_{\cE}$, \cite[Lemma~8.21]{Huyb} readily gives the following useful formulae for stable bundles $\cE,\cF$:
\begin{equation} \label{eq:HuybrechtsCommuteSpherical}
R_{\cF} \circ R_{L_{\cF}\cE} = R_{\cE} \circ R_{\cF}
, \quad 
R_{\cF} \circ R_{\cE} = 
R_{R_{\cF}\cE} \circ R_{\cF}.
\end{equation}
We first show that doubly right mutating the $i$-th bundle $\cE'_i$ in (\ref{eq:mutatedHelix}) gives the $i+3$-rd bundle $\cE'_{i+3}$. By symmetry, it suffices to do this for $i = 0,1,2$. Since $\cE$ is an elliptic helix we have
$$ R_{\cE_4} R_{\cE_2} (R_{\cE_1}\cE_0) = R_{\cE_4} \cE_3
$$
as desired. Using (\ref{eq:HuybrechtsCommuteSpherical}) we find
$$ R_{R_{\cE_4} \cE_3}  R_{\cE_4}\cE_2 = R_{\cE_4} R_{\cE_3} \cE_2 = \cE_5.
$$
Finally, note that since $\underline{\cE}$ is an elliptic helix, $R_{\cE_1}\cE_0 = L_{\cE_2}\cE_3$ hence
$$ R_{\cE_2} R_{L_{\cE_2}\cE_3} \cE_1 = R_{\cE_3} R_{\cE_2} \cE_1 = \cE_4
$$
proving part~(1). 

For part~(2), just note that 
$$\rk R_{\cE_1}\cE_0 = \Delta_1r_1 - r_0 = 3r_2 r_1 - r_0.$$
\end{proof}

\begin{definition}  \label{def:helixMutations}
We say that two elliptic helices $\underline{\cE},\underline{\cE}'$ are {\em mutations} of each other if they can be gotten from each other by a series of alterations of the form given in (1) of Proposition~\ref{def:helixMutations}.
\end{definition}

\begin{corollary}  \label{cor:MarkovIsMutateLines}
Any elliptic helix $\underline{\cE}$ which is of Markov type, that is, as described in Theorem~\ref{thm:onlyMarkov}, is a mutation of an elliptic helix consisting of line bundles. Thus, ${\sf Proj }\mathbb{S}^{nc}(\underline{\mathcal{E}})$ is equivalent to an elliptic quantum plane.
\end{corollary}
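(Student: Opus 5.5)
The plan is to induct on the Markov tree, using Proposition~\ref{prop:mutateHelix} to descend. Let $\underline{\cE}$ be of Markov type. By Theorem~\ref{thm:onlyMarkov}, its rank triple $\vec{r}=(r_0,r_1,r_2)$ is a Markov triple. The measure will be $\max(r_0,r_1,r_2)$, or the sum of the ranks. Since shifting indices preserves elliptic helices and ranks are 3-periodic (Proposition~\ref{prop:risdelta}), any three consecutive bundles $\cE_i,\cE_{i+1},\cE_{i+2}$ can serve as the seed. In particular, the rank triple can be cyclically rotated so that the largest rank sits in any chosen position.

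The key step is this. Suppose $\vec{r}\neq(1,1,1)$. Markov triples other than $(1,1,1)$ and $(1,1,2)$ up to permutation have distinct entries, and the largest entry $m$ in $(m,a,b)$ is strictly decreased by the Markov mutation $m\mapsto 3ab-m$. Proposition~\ref{prop:mutateHelix}(2) realises exactly the mutation replacing $r_0$ by $3r_1r_2-r_0$, and it moves that new entry into the middle slot. So I rotate indices until the largest rank is $r_0$, then apply the right mutation of part (1). This produces a new elliptic helix whose seed has ranks a permutation of $(3r_1r_2-r_0,r_1,r_2)$, and that triple has strictly smaller maximum.

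The main obstacle is that the new helix must again be of Markov type for the induction to continue. I would check this numerically. The mutated seed's $\Delta$'s are computed from Euler forms, and the new rank triple is again a Markov triple. By Proposition~\ref{prop:risdelta}, the ranks of a polynomial-growth helix equal $\vec\Delta$ or $\tfrac13\vec\Delta$, and conversely polynomial growth holds iff $\sum\Delta_i^2=\prod\Delta_i$ (Proposition~\ref{prop.whyMarkov}). So I would verify directly that the new $\Delta'$ equals $3$ times the new rank vector. This follows from the formula $\Delta'=\chi$ of consecutive terms together with the mutation formula $[R_{\cE_1}\cE_0]=\Delta_1[\cE_1]-[\cE_0]$, which is a short computation. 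Alternatively, I would note that mutations are invertible (left mutation undoes right), so the growth type is preserved.

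Iterating the descent terminates at rank triple $(1,1,1)$, that is, at a helix of line bundles. Reversing the mutations gives $\underline{\cE}$ as a mutation of that line-bundle helix. For the final claim, Corollary~\ref{cor.leftrightnoetherian} and Corollary~\ref{cor:cubicDivisor} apply to both helices. Ampleness (Proposition~\ref{prop.ellipticample}) together with Theorem~\ref{theorem.polish} gives ${\sf proj}$ of both endomorphism algebras equivalent to ${\sf Coh}X$. For the quadratic covers, I would argue as follows. Mutation corresponds to an autoequivalence $R_{\cE}$ of $D^b(X)$, which should induce an equivalence between the categories ${\sf Proj}$ of the AS-regular covers, as in \cite{morph}. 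The line-bundle case is the Bondal–Polishchuk/ATV setting, where $\mathbb{S}^{nc}$ is ${\sf Proj}$-equivalent to a 3-dimensional Sklyanin (elliptic) algebra. This last transfer along mutations is the step where I am least certain, and I would rely on the corresponding result in \cite{morph} for it.
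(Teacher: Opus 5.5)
Your first part is correct and is essentially the paper's argument. Via Proposition~\ref{prop:mutateHelix}(2), the three ways of mutating every third bundle realise the three Markov mutations of the rank triple, and Cassels' lemma brings any Markov triple down to $(1,1,1)$. Your check that the mutated helix is again of Markov type fills in a step the paper leaves implicit. One gets $\Delta'_1=\Delta_1$, $\Delta'_2=\Delta_3$ and $\Delta'_3=\chi(\cE_2,\cE_4)=\Delta_1\Delta_3-\Delta_2$, so $\vec{\Delta}'=3(r_1,3r_1r_2-r_0,r_2)=3\vec{r}'$ and Proposition~\ref{prop.whyMarkov} applies.

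The gap is in the second statement, and the mechanism you propose does not work. The helix (\ref{eq:mutatedHelix}) is not the image of $\underline{\cE}$ under a single autoequivalence of $D^b(X)$. Each $\cE_{3k}$ is mutated through its own neighbour $\cE_{3k+1}$. Moreover, an exact autoequivalence preserves $\chi$, so it would carry the $\Delta$-triple $3(r_0,r_1,r_2)$ to a cyclic rotation of itself. The new triple is $3(r_1,3r_1r_2-r_0,r_2)$, and equality of multisets would force $2r_0=3r_1r_2$, contradicting Lemma~\ref{lemma.three}.

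More fundamentally, anything you establish on $D^b(X)$ or ${\sf Coh}\,X$ concerns the cubic divisor, not the plane. The equivalence ${\sf proj}\,\End(\underline{\cE})\equiv{\sf Coh}\,X$ you get from Proposition~\ref{prop.ellipticample} and Theorem~\ref{theorem.polish} holds for every Markov-type helix. It does not determine ${\sf Proj}\,\mathbb{S}^{nc}(\underline{\cE})$.

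The missing idea is a Veronese comparison. The mutated helix keeps the bundles $\cE_{3k+1},\cE_{3k+2}$ of $\underline{\cE}$. After a shift of indices $A(i)_{jk}=A_{j+i,k+i}$, which does not change the graded module category, the paper asserts that $S=\mathbb{S}^{nc}(\underline{\cE})$ and $S'=\mathbb{S}^{nc}(\underline{\cE}')$ have equal $3$-Veroneses. Both are right noetherian by Corollary~\ref{cor.leftrightnoetherian}; this is where your Markov-type check for $\underline{\cE}'$ is needed. Hence they are ${\sf Proj}$-equivalent by \cite[Lemma 3.5]{ncquad}. Chaining along your sequence of mutations down to the line-bundle helix, whose quadratic cover is an elliptic quantum plane, completes the proof without any transfer result along $D^b(X)$.
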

\begin{proof}
Note that there are three possible ways you can mutate every third bundle in $\cE$, and by Proposition~\ref{prop:mutateHelix}~(2), they correspond to the three possible mutations of the Markov triple of ranks. Since we can always find a sequence of these mutations of Markov triples to reduce to the triple $(1,1,1)$ by \cite[Lemma 5, p. 28]{cassels} we are done with a proof of the first statement. 

For the second statement, it suffices to show that noncommutative symmetric algebras of mutated elliptic helices of Markov type (i.e. quadratic covers of the associated endomorphism algebras) are Proj-equivalent.  To this end, first note that if $A$ is a $\mathbb{Z}$-algebra and $i \in \mathbb{Z}$, then the shifted $\mathbb{Z}$-algebra $A(i)$ defined by $A(i)_{jk}=A_{j+i, k+i}$ has an equivalent graded module category (the equivalence is shifting of modules by $i$).  Suppose $\underline{\mathcal{E}}$ is an elliptic helix of Markov type and $\underline{\mathcal{E}'}$ is the elliptic helix (\ref{eq:mutatedHelix}).  Let $S$ and $S'$ denote the corresponding noncommutative symmetric algebras.  Then, for appropriate $i$, $S(i)$ and $S'(i)$ have equal $3$-Veroneses.  Since both are right noetherian by Corollary \ref{cor.leftrightnoetherian}, they are Proj-equivalent by \cite[Lemma 3.5]{ncquad}.  
\end{proof}

\section{Constructing elliptic helices $\underline{\cE}$ with non-noetherian $\End(\underline{\cE})$}  \label{sec:seedGeneratesHelix}

In \cite[Theorem~8.1]{morph}, we constructed a single family of elliptic helices $\underline{\cE}$ whose endomorphism ring was non-noetherian. It happens to be equigenerated which made the analysis amenable to the simpler methods used there. In this section, we wish to construct more examples with $\End(\underline{\cE})$ non-noetherian including some which are not equigenerated. 

The key question is, when does a seed $(E_0,E_1,E_2)$ generate an elliptic helix. The question is purely numerical, so we start with the corresponding numerical seed $([E_0],[E_1],[E_2]) = ({d_0 \choose r_0}, {d_1 \choose r_1}, {d_2 \choose r_2})$. 

Recall Equation~(\ref{eq.matrixMutateFormula}) determines all the $d_i,r_i$ in terms of the numerical seed and the $\Delta_i$. The numerical seed generates an elliptic helix if all the $r_i,r'_i >0$ by \cite[Theorem 7.14]{morph}. Our approach to a necessary criterion to guarantee this is that Equation~(\ref{eq.degRkFormula}) ensures that $r_{i + 3n}$ is either a monotonic function of $n$ or is concave up. We will look for seeds which give a concave up function $r_n$ with a minimum at $n=1$. 

\begin{proposition}  \label{prop:convexityCriterion}
Suppose a numerical seed with $r_0, r_2 \geq r_1$ generates a sequence $r_i$ which is increasing for $i \geq 1$, decreasing for $i < 1$ and such that $r'_1,r'_2 >0$. Then the seed generates an elliptic helix.
\end{proposition}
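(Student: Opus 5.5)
The plan is to reduce to \cite[Theorem 7.14]{morph}: a numerical seed generates an elliptic helix as soon as every $r_i$ and every intermediate rank $r'_i$ is positive. Here $r'_i$ is the rank of the single right mutation $R_{\cE_i}\cE_{i-1}$, namely $r'_{i+1} = \Delta_i r_i - r_{i-1}$, which by condition (3) is also the rank of $L_{\cE_{i+2}}\cE_{i+3}$. The hypotheses give the $r_i$ for free. Since $r_n$ is increasing for $n\ge 1$ and decreasing for $n<1$, it attains its minimum at $n=1$. The seed ranks are positive integers, so $r_n \ge r_1 \geq 1$ for all $n$. What remains is to show $r'_i>0$ for every $i$, given only $r'_1,r'_2>0$.

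Next I would rewrite the intermediate ranks in terms of consecutive ranks. From the mutation sequence
\[
0\to \cE_{i-1}\to \cE_i^{\oplus\Delta_i}\to R_{\cE_i}\cE_{i-1}\to 0
\]
and the second one
\[
0\to R_{\cE_i}\cE_{i-1}\to \cE_{i+1}^{\oplus\Delta_{i+2}}\to \cE_{i+2}\to 0,
\]
the rank is
\[
r'_{i+1} = \Delta_i r_i - r_{i-1} = \Delta_{i+2} r_{i+1} - r_{i+2}.
\]
The $\Delta$'s are 3-periodic by Proposition~\ref{prop.3period}.

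For $i\ge 2$ I use the first expression. Monotonicity gives $r_i \geq r_{i-1}$, and $\Delta_i\geq 1$ is the dimension of a nonzero Hom space. If $\Delta_i \ge 2$, or if $r_i > r_{i-1}$, then $r'_{i+1} = \Delta_i r_i - r_{i-1} > 0$.

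The borderline case is $\Delta_i = 1$ together with $r_i = r_{i-1}$. To handle it, I would use strict increase, reading ``increasing'' as strictly increasing; failing that, I would argue that $r_{i+3n}$ is concave up, by (\ref{eq.degRkFormula}). Then a flat step forces the whole sequence to be constant from that point on, which I would rule out separately. For $i\le 0$ I use the second expression. Since $r_n$ decreases for $n<1$, we have $r_{i+1}\ge r_{i+2}$ whenever $i+2\le 1$. This handles $i\le -1$ in the same way, with the same borderline caveat.

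The remaining indices near the minimum are covered by the hypotheses. The values $r'_1$ and $r'_2$ are given directly, and $r'_3$ is controlled by the assumption $r_0,r_2 \ge r_1$: for example, $r'_3 = \Delta_1 r_2 - r_1 \ge r_2 - r_1 \geq 0$. I expect the main obstacle to be exactly this bookkeeping. First, matching each $r'_j$ to whichever of its two expressions has the monotonicity in the right direction. Second, excluding the equality cases where the difference is $0$ rather than positive. At those spots I would try the alternate expression for the same $r'_j$, and fall back on strictness of the monotonicity together with periodicity of the $\Delta$'s. Once every $r'_j>0$ is established, \cite[Theorem 7.14]{morph} finishes the proof.
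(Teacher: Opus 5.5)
Your proposal is correct and follows essentially the paper's route. The paper also gets $r_i\ge r_1>0$ from monotonicity, gets $r'_j>0$ for $j\ge 3$ from $r'_j=\Delta_{j-1}r_{j-1}-r_{j-2}$ and for $j\le 0$ from the left-mutation expression $r'_j=\Delta_{j+1}r_j-r_{j+1}$ (reading "increasing/decreasing" as strict, as the paper also does implicitly), and then concludes with \cite[Theorem 7.14]{morph}. There are two harmless slips: $r'_3=\Delta_2r_2-r_1$, not $\Delta_1r_2-r_1$; and the borderline case $r'_0=\Delta_1r_0-r_1$ needs no extra work, since $r'_2=\Delta_1r_1-r_0>0$ together with $r_0\ge r_1$ forces $\Delta_1\ge 2$, so $r'_0\ge 2r_0-r_1\ge r_0>0$.
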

\begin{proof}
Note first all $r_i \geq 0$. Now $r_i$ is increasing for $i \geq 1$ so 
$$r'_{i+2} = \Delta_{i+1}r_{i+1} - r_i >0$$
and a similar argument shows that $r'_j >0$ for all $j\leq 0$.
\end{proof}

We wish to reduce checking piecewise monotonicity of $r_i$ to some finite condition as follows.

\begin{proposition} \label{prop:growthCriterionForGeneration}
Suppose the numerical seed has $r_0,r_2\geq r_1$ and $r'_1,r'_2 >0$. Suppose there exists a real number $l>1$ such that 
\begin{equation}  \label{eq:growthCondition}
    \Delta_i - \frac{\Delta_j}{l} > l, \quad \text{for all distinct } i,j. 
\end{equation}
If $\frac{r_0}{r_1}, \frac{r_2}{r_1}>l$ then the seed generates an elliptic helix. 
\end{proposition}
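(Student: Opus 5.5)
The plan is to reduce to Proposition~\ref{prop:convexityCriterion}. Its hypotheses $r_0,r_2\geq r_1$ and $r'_1,r'_2>0$ are given. So it suffices to show that the rank sequence $r_i$ generated by the numerical seed is strictly increasing for $i\geq 1$ and strictly decreasing for $i<1$. I will prove the stronger, ratio-type statements
$$
r_{i+1} > l\, r_i \quad (i\geq 1), \qquad r_{i-1} > l\, r_i \quad (i \leq 1),
$$
each by induction. Positivity of all $r_i$ is carried along in the inductions, using $l>1$.

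\textbf{Forward recursion.} From (\ref{eq.basicMutateFormulae}), reindexed and using the 3-periodicity of Proposition~\ref{prop.3period}, the rank recursion reads
$$r_{i+3} = \Delta_{i+3} r_{i+2} - \Delta_{i+1} r_{i+1} + r_i .$$
Equivalently, $r_{i+3} = \Delta_{i+3} r_{i+2} - r'_{i+2}$ with $r'_{i+2} = \Delta_{i+1}r_{i+1} - r_i$ the rank of the intermediate mutation.

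Assume inductively that $r_i>0$ and $r_{i+2} > l r_{i+1}>0$. Dropping the positive term $r_i$ gives
$$
r_{i+3} > \Delta_{i+3} r_{i+2} - \Delta_{i+1} r_{i+1} > \Bigl(\Delta_{i+3} - \frac{\Delta_{i+1}}{l}\Bigr) r_{i+2} > l\, r_{i+2}.
$$
The last inequality is (\ref{eq:growthCondition}), which applies because $i+3\not\equiv i+1 \pmod 3$. The base case $i=0$ needs $r_0>0$ and $r_2> l r_1$, which is the hypothesis $r_2/r_1>l$. Hence $r_1<r_2<r_3<\cdots$, and all these ranks are positive.

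\textbf{Backward recursion.} Solving the same identity for $r_i$ gives
$$r_i = \Delta_{i+1} r_{i+1} - \Delta_{i+3} r_{i+2} + r_{i+3},$$
which corresponds to the symmetric left-mutation description. Assume $r_{i+3}>0$ and $r_{i+1} > l r_{i+2}>0$. Then
$$r_i > \Bigl(\Delta_{i+1} - \frac{\Delta_{i+3}}{l}\Bigr) r_{i+1} > l\, r_{i+1},$$
again by (\ref{eq:growthCondition}) with distinct indices. The base case $i=-1$ uses $r_0 > l r_1$ and $r_2>0$, and descending induction then gives $r_1<r_0<r_{-1}<\cdots$.

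With monotonicity established on both sides of $i=1$, Proposition~\ref{prop:convexityCriterion} applies directly. It yields positivity of all the intermediate ranks $r'_j$, and hence, via \cite[Theorem 7.14]{morph}, that the seed generates an elliptic helix.

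The main point to get right is the index bookkeeping in the recursion: which $\Delta$ multiplies which rank, using $\Delta_{i+3}=\Delta_i$ and condition (3) identifying $\chi(R_{E_{i+1}}E_i,E_{i+2})$ with $\Delta_{i+3}$. It also has to be checked that the two $\Delta$'s appearing in each step really have distinct indices modulo 3, so that (\ref{eq:growthCondition}) applies. Everything else is a one-line estimate.
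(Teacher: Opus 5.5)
Your proposal is correct and follows essentially the same route as the paper. The paper also proves $r_{i+1}/r_i>l$ for $i\geq 1$ by induction: it drops the positive term $r_i/r_{i+2}$ in the recursion from (\ref{eq.matrixMutateFormula}) to obtain $r_{i+3}/r_{i+2}>\Delta_i-\Delta_{i+1}/l>l$, treats $i<1$ by the symmetric argument, and then invokes Proposition~\ref{prop:convexityCriterion}. Both arguments tacitly use $\Delta_{i+1}>0$ in the step $\Delta_{i+1}r_{i+1}<\Delta_{i+1}r_{i+2}/l$, and this positivity does follow from (\ref{eq:growthCondition}).
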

\begin{proof}
We first show by induction that $\frac{r_{i+1}}{r_i} >l$ for $i\geq 1$. Note Equation~(\ref{eq.matrixMutateFormula}) gives for $i\geq 0$ 
$$ \frac{r_{i+3}}{r_{i+2}} = \frac{r_i}{r_{i+2}} - \Delta_{i+1} \frac{r_{i+1}}{r_{i+2}} + \Delta_i >\Delta_i - \frac{\Delta_{i+1}}{l} >l.$$
A similar argument shows $r_i$ is descending for $i < 1$ so Proposition~\ref{prop:convexityCriterion} applies. 
\end{proof}

The approach suggests a natural starting point to look for seeds generating elliptic helices is to try $r_1 = 1$ in which case $L_1$ is a line bundle which we may as well normalise to be $\cO$.

\begin{example}  \label{eg:equigen}
Here we seek equigenerated examples, that is, where $\Delta_i = \Delta$ for all i. We thus start with the numerical seed $({ -\Delta \choose r_0}, {0 \choose 1}, {\Delta \choose r_2})$ so $\Delta_1 = \Delta_2 = \Delta$. From (\ref{eq.basicMutateFormulae}), we also want
\begin{equation*}
\Delta (r_0 + r_2) = \Delta_{02}  = \Delta^2 - \Delta  \Leftrightarrow \Delta = r_0 + r_2 + 1. 
\end{equation*}
We apply Proposition~\ref{prop:growthCriterionForGeneration} with $l = 2- \varepsilon$ where $\varepsilon >0$ is sufficiently small. Then (\ref{eq:growthCondition}) becomes 
\begin{equation*}
\Delta > \frac{2-\varepsilon}{1 - (2-\varepsilon)^{-1}} = \frac{(2-\varepsilon)^2}{1-\varepsilon} \to 4
\end{equation*}
as $\varepsilon \to 0$. Note that $r'_2 = \Delta - r_0 = r_2+1 >0, r'_1 = \Delta - r_2 = r_0 + 1 >0$. Thus the numerical seed  $({ -r_0-r_2-1 \choose r_0}, {0 \choose 1}, {r_0+r_2+1 \choose r_2})$ generates an elliptic helix whenever $r_0, r_2 \geq 2$ and $\gcd(r_0+r_2+1, r_i) = 1$ for $i = 0,2$.  Note that these examples are distinct from those constructed in \cite{morph}.

We can generalise this to non-equigenerated examples as follows. Consider the numerical seed $({ -r_0-r_2-a \choose r_0}, {0 \choose 1}, {r_0+r_2+a \choose r_2})$ for some $a \in \mathbb{Z}_{\geq 1}$ so $\Delta_1 = \Delta_2 = r_0+r_2+a$ but now $\Delta_0 = a(r_0+r_2+a)$. Picking $l = a+1 - \varepsilon$ in Proposition~\ref{prop:convexityCriterion}, the above analysis shows we obtain an elliptic helix whenever $r_0,r_2 \geq a+1, \ r_0+r_2 > a^2+a+1$ and, of course  $\gcd(r_0+r_2+a, r_i) = 1$ for $i = 0,2$.
\end{example}

\begin{example} \label{example.noneq}
Consider the numerical seed $({ -d-a-1 \choose d+a-r}, {0 \choose 1}, {d \choose r})$ where $a,d,r \in \mathbb{N}$ are chosen so that $\gcd(d+a+1,d+a-r) = 1 = \gcd(d,r)$. Clearly we must have $d+a-r >0$. Note $\Delta_0 = d-(a+1)r, \Delta_1 = d+a+1, \Delta_2 = d$. To apply Proposition~\ref{prop:growthCriterionForGeneration} we first need 
$$ r'_1 = \Delta_2 r_1 - r_2 = d-r >0, \quad r'_2 = \Delta_1 r_1 - r_0 = r+1 >0.$$
We also need to impose the condition (\ref{eq:growthCondition}) for some $l$. Picking $l =2$, the set of inequalities amount to the strictest one
$$\min_i \Delta_i - \max_j \frac{\Delta_j}{2} = d-(a+1)r - \frac{d+a+1}{2}>2$$
which is equivalent to the condition $d>(a+1)(2r+1)+4$. Finally, we impose the condition $\min\{ d+a-r, r \} > 2$. When these hold, we obtain an elliptic helix.  
\end{example}
These examples suggests that elliptic helices are relatively plentiful, but finding them is not so easy, let alone giving a complete classification.

\end{document}